\newcommand \reg{\operatorname{reg}}
\newcommand \tor{\operatorname{Tor}}
\newcommand \h{\operatorname{ht}}
\newcommand \pdim{\operatorname{pd}}
\newcommand{\Hred}{\widetilde{H}}
\newcommand \G{\mathcal{G}}
\newcommand \ee{\mathbf{e}}
\newcommand \aaa{\mathbf{a}}
\newcommand \kk{\mathbb{K}}
\newcommand \ZZ{\mathbb{Z}}
\newcommand \NN{\mathbb{N}}
\newtheorem{theorem}{Theorem}[section]
\newtheorem{definition}[theorem]{Definition}
\newtheorem{lemma}[theorem]{Lemma}
\newtheorem{proposition}[theorem]{Proposition}
\newtheorem{example}[theorem]{Example}
\newtheorem{question}[theorem]{Question}
\newtheorem{remark}[theorem]{Remark}
\newtheorem{corollary}[theorem]{Corollary}
\newtheorem{notation}[theorem]{Notation}
\newtheorem{con}[theorem]{Conjecture}
\begin{document}
	\title[Subadditivity, strand connectivity and multigraded Betti
	numbers]{Subadditivity, strand connectivity and multigraded Betti
	numbers of monomial ideals}
	\author{A V Jayanthan}
	\email{jayanav@iitm.ac.in}
	
	\author{Arvind Kumar}
	\email{arvkumar11@gmail.com}
	\address{Department of Mathematics, 5th floor, New academic block, Indian Institute of Technology
		Madras, Chennai, INDIA - 600036}
	
	\maketitle
	
\begin{abstract}
Let $R = \kk[x_1, \ldots, x_n]$ and $I \subset R$ be a homogeneous
ideal. In this article, we first obtain certain sufficient conditions
for the subadditivity of $R/I$. As a consequence, we prove that if $I$
is generated by homogeneous complete intersection, then subadditivity 
holds for $R/I$. We then study a conjecture of Avramov, Conca and 
Iyengar on subadditivity, when $I$ is a monomial ideal with $R/I$ 
Koszul. We identify several classes of edge ideals of graphs $G$ 
such that the subadditivity holds for $R/I(G)$. We then study the
strand connectivity of edge ideals and obtain several
classes of graphs whose edge ideals are strand connected. Finally,
we compute upper bounds for multigraded Betti numbers of several
classes of edge ideals.
\end{abstract}
	
\section{Introduction}
Let $R=\mathbb{K}[x_1,\ldots,x_n]$ be a standard graded polynomial
ring over a field $\mathbb{K}$.  Let $M$ be a finitely generated
graded $R$-module. Let
\[
(\mathbf{F_\bullet}, \partial_{\bullet}) : 0 \to F_p
\xrightarrow{\partial_p} \cdots \xrightarrow{\partial_2} F_1
\xrightarrow{\partial_1} F_0 \to M \to 0
\]
be a graded free resolution of $M$ (not necessarily minimal) with
$F_i = \oplus_j R(-j)^{b_{ij}}$ for some 
$b_{ij} \in \ZZ_{\geq 0}$. For $i \geq 0$, set
$$t_i^R(\mathbf{F_\bullet}) = \sup\{j ~: ~ b_{ij} \neq 0\}.$$ We say {\it subadditivity holds} for 
$(\mathbf{F_\bullet},\partial_{\bullet})$ if  for all $a,b \geq 0$ with $a+b \leq p$, 
$$t_{a+b}^R(\mathbf{F_\bullet}) \leq t_a^R(\mathbf{F_\bullet}) +
t_b^R(\mathbf{F_\bullet}).$$ 
If $(\mathbf{F_\bullet},\partial_{\bullet})$ is the graded
minimal free resolution of $M$, then write $F_i = \oplus_j
R(-j)^{\beta_{i,j}^R(M)}$, where the number $\beta_{i,j}^R(M)$ is
called the $(i,j)$-th {\it graded Betti number} of $M$, and in this
case we write $t_i^R(M)$ for $t_i^R(\mathbf{F}_\bullet)$.
	
It is known that the subadditivity does not always hold for
homogeneous ideals in polynomial rings.  Eisenbud, Huneke and Ulrich
gave an example of an ideal $I$ in a polynomial ring $R$ for which
$t_2^R(R/I) > 2 t_1^R(R/I)$, \cite[Example 4.4]{EHU}. 
In the same paper they proved that if $\dim (R/I)
\leq 1$, then $t_n^R(R/I) \leq t_a^R(R/I) +t_{n-a}^R(R/I)$, for all $a
\geq 1$. McCullough proved without any restriction on dimension,
\cite{mcl11}, that  $t_p^R(R/I) \leq \max \{t_i^R(R/I) +
t_{p-i}^R(R/I) \; : \; 1 \leq i \leq p-1\}$, where $p = \pdim ( R/I)$.
Herzog and Srinivasan, \cite{HS}, improved this result further and
showed that $t_p^R(R/I) \leq t_{p-1}^R(R/I) + t_1^R(R/I)$. They, and
independently Yazdan Pour \cite{P17}, proved that if $I$ is a
monomial ideal, then  $t_a^R(R/I) \leq t_{a-1}^R(R/I)
+ t_1^R(R/I)$ for all $a \geq 1$.  Recently,
McCullough and Seceleanu proved that the subadditivity holds for
quotients of complete intersection ideals, \cite[Proposition
4.1]{MS20}.  They also gave a family of Gorenstein cyclic $R$-modules
for which subadditivity does not hold.  For some recent developments
in this direction, see \cite{AA20, SF, FS20}.  
In \cite{ACI}, Avramov, Conca
and Iyengar also gave an example to show that the subadditivity does
not hold in general. They conjectured:
\begin{con}\cite[Conjecture 6.4]{ACI}
If $a, b \geq 1$ such that $a+b \leq \pdim(R/I)$ and
$\reg_{a+b+1}^{R/I}(\kk) = 0$, then $t_{a+b}^R(R/I) \leq t_a^R(R/I) +
t_b^R(R/I)$.
\end{con}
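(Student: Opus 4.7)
The plan is to apply the change-of-rings spectral sequence associated to the surjection $R\twoheadrightarrow R/I$ and to exploit the rigidity that the Koszul-type hypothesis forces on it. The Cartan--Eilenberg spectral sequence reads
$$E^2_{p,q} \;=\; \tor^{R/I}_p(\kk,\kk)\otimes_\kk \tor^R_q(R/I,\kk) \;\Longrightarrow\; \tor^R_{p+q}(\kk,\kk),$$
using that each $\tor^R_q(R/I,\kk)$ is a graded $\kk$-vector space. The abutment $\tor^R_{p+q}(\kk,\kk)$ is an exterior power of the dual of $\mathfrak{m}/\mathfrak{m}^2$ and hence is concentrated in internal degree exactly $p+q$, so any class of higher internal degree on the $E^2$-page must be killed before reaching $E^\infty$.

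The hypothesis $\reg_{a+b+1}^{R/I}(\kk)=0$ translates into $t_p^{R/I}(\kk)=p$ for $1\leq p\leq a+b+1$, so along the anti-diagonal $p+q=a+b$ the top internal degree on $E^2_{p,a+b-p}$ is exactly $p+t_{a+b-p}^R(R/I)$. If one assumes $t_{a+b}^R(R/I)>a+b$ for a contradiction, then the top-degree piece of $E^2_{0,a+b}=\tor^R_{a+b}(R/I,\kk)$ must be hit by a differential $d_r:E^r_{r,a+b-r+1}\to E^r_{0,a+b}$ in the same internal degree, and tracing back to $E^2$ forces
$$t_{a+b}^R(R/I) \;\leq\; r + t_{a+b-r+1}^R(R/I)$$
for some $r\geq 2$. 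By itself this is only a McCullough-style bound. To upgrade it to the conjectured $(a,b)$-split, the plan is to factor the Koszul-range class of $\tor^{R/I}_r(\kk,\kk)$ that realises the differential as a product of classes of homological degrees $a$ and $b$ in the Tor algebra $\tor^{R/I}(\kk,\kk)$, and then to lift that factorisation through the Eagon/bar model of the spectral sequence so that it becomes a product-type decomposition yielding $t_{a+b}^R(R/I)\leq t_a^R(R/I)+t_b^R(R/I)$.

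The hard part, and the reason the conjecture is still open, is precisely this lifting: the required factorisation at the chain level is controlled by secondary Massey products, equivalently by an $A_\infty$-structure on $\tor^{R/I}(\kk,\kk)$, and making it survive the higher differentials $d_r$ while respecting internal degrees appears out of reach in full generality. This structural obstruction is the reason the paper does not aim at the conjecture as stated, and instead establishes sufficient conditions for subadditivity and, for the subclass of monomial ideals that forms its main subject, replaces the missing algebraic input by the combinatorial tool of strand connectivity via Hochster's formula.
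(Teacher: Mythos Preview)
The statement you are treating is not a theorem in the paper but a \emph{conjecture}, quoted verbatim from Avramov--Conca--Iyengar. The paper offers no proof of it in general; it is stated purely as motivation, and the body of the paper verifies the inequality $t_{a+b}^R(R/I)\leq t_a^R(R/I)+t_b^R(R/I)$ only for particular classes of monomial (mostly edge) ideals, via combinatorial arguments built on Hochster's formula, mapping cones, and induction on the graph structure. There is therefore no ``paper's own proof'' to compare against.

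Your proposal is not a proof either, and you say so yourself: the change-of-rings spectral sequence plus the Koszul hypothesis on $\tor^{R/I}(\kk,\kk)$ does give, exactly as you write, an inequality of the form $t_{a+b}^R(R/I)\leq r+t_{a+b-r+1}^R(R/I)$ for some $r\geq 2$, which is the McCullough/Herzog--Srinivasan flavour of bound. The step you flag as ``the hard part'' --- factoring the relevant class in $\tor^{R/I}_r(\kk,\kk)$ into pieces of homological degrees $a$ and $b$ and then lifting that factorisation through the higher differentials --- is genuinely the obstruction, and it is precisely where the ACI argument stops (their best general result under these hypotheses has a $+1$ and an extra height constraint). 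Your identification of this gap is accurate; but a proposal that names the gap and declares it out of reach is a diagnosis, not a proof. In short: the paper does not claim to prove the conjecture, and neither does your write-up.
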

In the above conjecture, $\reg_a^{R/I}(\kk) =
\underset{i\leq a}{\sup}\{j - i ~ : ~\tor_i^{R/I}(\kk,\kk)_j \neq 0 \}.$
They proved that if  $\reg_{a+b+1}^{R/I}(\kk) = 0$ and $a+b \choose b$
is invertible in $\mathbb{K}$, then for $a,b  \leq \h(I)$ with $a+b
\leq \pdim ( R/I)$,  $t_{a+b}^R(R/I) \leq t_a^R(R/I) +
t_b^R(R/I)+1$. An important instance for
$\reg_{a+b+1}^{R/I}(\kk) = 0$ to happen is when $R/I$ is a Koszul
algebra. In this article, we study the above conjecture
for monomial ideals $I$ such that $R/I$ is Koszul.
It is well known that if $I$ is a monomial ideal, then
$R/I$ is Koszul if and only if $I$ is a quadratic monomial ideal.
Any quadratic monomial ideal can be polarized to
get a squarefree quadratic monomial ideal having same Betti numbers.
Thus, to study the subadditivity problem of Koszul algebras which are
quotients of monomial ideals, it is enough to study the problem for
quadratic squarefree monomial ideal. Note that quadratic
squarefree monomial ideals are in one to one correspondence with
finite simple graphs. This gives us the extra leverage of using
combinatorial tools to study algebraic invariants of these ideals. 
Let $G$ be a finite simple graph
on $V(G)=\{x_1,\ldots,x_n\}$. Then the {\it edge ideal} of $G$,
denoted by $I(G)$, is the ideal generated by the set $\{ x_ix_j \; :
\: \{x_i,x_j\} \in E(G) \}$. Abedelfatah and Nevo, \cite{AN}, proved
that for any graph $G$ on $V(G)$ over any field $\mathbb{K}$,
$t_a^R(R/I(G)) \leq t_{a-i}^R(R/I(G)) + t_i^R(R/I(G))$ for all $a \geq
1$ and $i=1,2,3$.  Bigdeli and Herzog, \cite{BH17}, showed the
subadditivity holds for edge ideals of chordal graphs and whisker
graphs.

In this article, we prove that the subadditivity holds for several
classes of edge ideals. First we study the problem for homogeneous
ideals with some extra hypotheses, Theorems \ref{subad}, \ref{rsub},
\ref{tsub}. As a consequence of our results, we reprove a result of
McCullough and Seceleanu that subadditivity holds for homogeneous
complete intersections, Corollary \ref{csub}. We then move on to study
the subadditivity problem for edge ideals of finite simple graphs. We
first show that if $G$ is a graph and $I(G)$ its edge ideal, then
$t_{a+b}^R(R/I(G)) \leq t_a^R(R/I(G)) + t_b^R(R/I(G))$ if $a \leq
\nu(G)+1$, Propositions \ref{indsub}, \ref{nug+1}, where $\nu(G)$
denotes the induced matching number (see
Section 2 for the definition). We then study the
(multi)graded Betti numbers of a hereditary class of graphs under some
hypotheses and show that the subadditivity holds for the edge ideals
of this class. As a consequence, we prove that the subadditivity holds
for clique sum of a cycle and chordal graphs (in particular, unicyclic
graphs), Wheel graphs, Jahangir graphs, complete multipartite graphs
and fan graphs, Theorem \ref{subedge}, Corollary \ref{unicyc},
Corollary \ref{main-cor}. Our methods give new ways of constructing
more classes of graphs having subadditivity. We also consider $t$-path
ideals, which is a generalization of edge ideals (or can be thought of
as certain $t$-uniform hypergraphs) of rooted trees and prove that the
subadditivity holds for these ideals, Theorem \ref{rooted-tree}.

A closely related problem on vanishing of Betti numbers is the strand
connectivity. For a finitely generated graded $R$-module $M$, the set
$\{i : \beta_{i, i+j}^R(M) \neq 0 \}$ is called the $j$-$strand$ of $M$. 
If $j$-strand of $M$ is non-empty, then set 
$$p_j(M):=\max\{i \; : \; \beta_{i,i+j}^R(M) \neq
0\}\hspace*{5mm}\text{ and }\hspace*{5mm}
q_j(M):=\min\{i \; : \; \beta_{i,i+j}^R(M) \neq 0\}.$$
A non-empty $j$-strand of $M$ is said to be {\it connected} if $j$-strand
$=[q_j(M),p_j(M)]$. The module $M$ is said to be {\it strand connected} if
every non-empty strand of $M$ is connected. For a homogeneous ideal
$I \subset R$, we set these terminologies for $I$ by taking $M =
R/I$.

What are strand connected homogeneous ideals? Well, there are some
obvious classes in this category. For example, if $I$ has a pure
resolution, then the non-empty strands are always connected. It is not
very difficult to see that the strands of not all monomial ideals are
connected, see Example \ref{discon-strand}. So, one is interested in
identifying classes of monomial ideals which are strand connected.
Even in the case of quadratic monomial ideals, not many classes of
ideals have been identified which are strand connected. In this
context, Conca asked:

\begin{question}\cite[Question 1.1]{AN}
If $I$ is a quadratic monomial ideal, then is $I$ strand connected?
\end{question}

In \cite{AN}, Abedelfatah and Nevo gave a class of quadratic monomial
ideals that are not strand connected. They also proved that the
$2$-strand of $I$ is connected for any quadratic monomial ideal. In
\cite{BH17}, Bigdeli and Herzog proved that edge ideals of chordal
graphs and cycles are strand connected. Our goal is to identify more
classes of edge ideals which are strand connected.

We begin with an example of a monomial ideal which is not strand
connected. We then identify a hereditary class of graphs whose edge
ideals are strand connected, Theorem \ref{subedge}. Then we prove that
from a given edge ideal which is strand connected, one can obtain
stand connected edge ideals by doing certain combinatorial operations
on it, Theorems \ref{strandvertex}, \ref{strandjoin}. As a
consequence, we prove strand connectivity for several important
classes of graphs, Corollary \ref{strand-cor}.

Most of the important homological invariants associated with finitely
generated modules are read off from the Betti numbers. Graded Betti
numbers of edge ideals of some classes of graphs are known, see for
example \cite{HV07}, \cite{JTh}, \cite{Kat2006}. The structure of the
minimal free resolution can be further refined by considering
multigraded resolution and multigraded Betti numbers. If $G$ is a
forest on $n$ vertices, then Bouchat, in \cite{BRR}, proved that the
$\NN^n$-graded Betti numbers of $I(G)$ are either $0$ or $1$. Boocher
et al. showed that the $\NN^n$-graded Betti numbers of cycles are
bounded above by $2$. In this article, we generalize this result to
the case of unicyclic graphs and show that the $\NN^n$-graded Betti
numbers are bounded above by $2$, Theorem \ref{multi-betti-uc}. We
also obtain upper bounds for the $\NN^n$-graded Betti numbers of Fan
graphs, Jahangir graphs and complete multipartite graphs, Corollaries
\ref{multi-betti-fan}, \ref{multi-betti-jh}, \ref{multi-betti-cm}.
The paper is organized as follows: In Section 2, we prove the results
concerning the subadditivity of monomial ideals. We collect all the
results on strand connectivity in the next section and final section
contains the results on multigraded Betti numbers.

\section{Subadditivity of syzygies of homogeneous ideals}
In this section, we study the subadditivity of maximal shifts in the
finite graded free resolution of homogeneous ideals. We first discuss
certain sufficient conditions for the subadditivity of monomial ideals.
	
\begin{theorem}\label{subad}
Let $I \subset R$ be a homogeneous ideal, and $f \in R$ be a
homogeneous polynomial of degree $d>0$ such that $f\notin I$.
Assume that $t_1^R(R/(I:f)) \leq d$. If free
resolutions  of $R/I$ and $R/(I:f)$ satisfy the subadditivity
condition, then the resolution obtained by the mapping cone
construction applied to the map $[R/(I:f )](-d) \xrightarrow{\cdot
f} R/I$ satisfies the subadditivity condition.
\end{theorem}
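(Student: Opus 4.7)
My plan is to write the mapping cone down explicitly and then deduce its subadditivity by a direct case analysis, using the subadditivity of the two ingredient resolutions together with the hypothesis $t_1^R(R/(I:f))\leq d$.

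Let $\mathbf{F}_\bullet$ and $\mathbf{G}_\bullet$ denote the given free resolutions of $R/I$ and $R/(I:f)$. A standard lifting argument produces a chain map $\mathbf{G}_\bullet(-d)\to \mathbf{F}_\bullet$ covering multiplication by $f$; since $\cdot f:[R/(I:f)](-d)\to R/I$ is injective, its mapping cone $\mathbf{C}_\bullet$ resolves $R/(I+(f))$ with $C_i=F_i\oplus G_{i-1}(-d)$. Writing $t_i:=t_i^R(\mathbf{F}_\bullet)$, $s_i:=t_i^R(\mathbf{G}_\bullet)$ and $T_i:=t_i^R(\mathbf{C}_\bullet)$, one reads off $T_0=0$ and
$$T_i=\max\{t_i,\ s_{i-1}+d\}\qquad (i\geq 1).$$

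Now fix $a,b\geq 0$ with $a+b$ at most the length of $\mathbf{C}_\bullet$. The cases $a=0$ or $b=0$ are immediate from $T_0=0$, so I assume $a,b\geq 1$ and split on which term realizes the maximum in $T_{a+b}$. If $T_{a+b}=t_{a+b}$, the subadditivity of $\mathbf{F}_\bullet$ gives $T_{a+b}\leq t_a+t_b\leq T_a+T_b$ directly. Otherwise $T_{a+b}=s_{a+b-1}+d$; the subadditivity of $\mathbf{G}_\bullet$ applied to the partition $(a-1)+b$ of $a+b-1$ yields $s_{a+b-1}\leq s_{a-1}+s_b$, and the same subadditivity combined with the hypothesis $s_1\leq d$ yields $s_b\leq s_{b-1}+s_1\leq s_{b-1}+d$. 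Chaining these,
$$T_{a+b}=s_{a+b-1}+d\leq s_{a-1}+s_b+d=(s_{a-1}+d)+s_b\leq T_a+T_b,$$
since $s_{a-1}+d\leq T_a$ by definition and $s_b\leq s_{b-1}+d\leq T_b$ by the previous line.

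The argument is mostly bookkeeping, and the one conceptual point is the role of the hypothesis $t_1^R(R/(I:f))\leq d$: it is precisely what allows $s_b$ to be traded for $s_{b-1}+d$, so that the shift by $d$ coming from $\mathbf{G}_\bullet(-d)$ can be absorbed into the mapping-cone shifts $T_a,T_b$. Edge issues such as $s_{a+b-1}$ or $s_{a-1}$ lying beyond the projective dimension of $\mathbf{G}_\bullet$ never cause trouble: if any such quantity is interpreted as $-\infty$, the relevant subadditivity inequality holds vacuously and the case collapses to the first sub-case already handled. Thus no serious obstacle arises beyond organizing the max-of-two case split cleanly.
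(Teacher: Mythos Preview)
Your proof is correct and follows essentially the same approach as the paper: both arguments express $T_i=\max\{t_i,\,s_{i-1}+d\}$ from the mapping cone and then combine the subadditivity of $\mathbf{F}_\bullet$ and $\mathbf{G}_\bullet$ with the hypothesis $s_1\leq d$ (used to trade $s_b$ for $s_{b-1}+d$). The only difference is presentational---you organize the argument as a case split on which term realizes $T_{a+b}$, whereas the paper writes a single chain of inequalities starting from $T_a+T_b$ and descending to $T_{a+b}$.
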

\begin{proof}
Let $(\mathbf{F}_{\bullet}, \delta^{\mathbf{F}}_{\bullet})$ and
$(\mathbf{G}_{\bullet},\delta^{\mathbf{G}}_{\bullet})$
be free resolutions of $R/I$ and $R/(I:f)$, respectively. Note
that $(\mathbf{G}_{\bullet}(-d),\delta^{\mathbf{G}}_{\bullet})$ is a free resolution
of $[R/(I:f)](-d)$. Let $(\mathfrak{F}_{\bullet},\xi_{\bullet})$
be the mapping cone construction applied to the map $[R/(I:f
)](-d) \xrightarrow{\cdot f} R/I$. Then $\mathfrak{F}_i = F_i
\oplus G_{i-1}(-d)$, for all $i \geq 1$. Therefore, for all $a,b \geq 1$.
\begin{align*}
t^R_{a}(\mathfrak{F}_{\bullet})+t^R_{b}(\mathfrak{F}_{\bullet})&=
\max\{t^R_{a}(\mathbf{F}_{\bullet}),\; t^R_{a-1}(\mathbf{G}_{\bullet}(-d))\}+\max\{t^R_{b}(\mathbf{F}_{\bullet}),\; t^R_{b-1}(\mathbf{G}_{\bullet}(-d))\}\\&=
\max\{t^R_{a}(\mathbf{F}_{\bullet}),\; t^R_{a-1}(\mathbf{G}_{\bullet})+d\}+\max\{t^R_{b}(\mathbf{F}_{\bullet}),\; t^R_{b-1}(\mathbf{G}_{\bullet})+d\}\\&=
\max \{t^R_{a}(\mathbf{F}_{\bullet})+t^R_{b}(\mathbf{F}_{\bullet}),\; t^R_{a}(\mathbf{F}_{\bullet})+t^R_{b-1}(\mathbf{G}_{\bullet})+d,\;\\& \;\;\;\;\;\;\;\;\;\;\;\;\;\;\; t^R_{a-1}(\mathbf{G}_{\bullet})+t^R_{b}(\mathbf{F}_{\bullet})+d,\; t^R_{a-1}(\mathbf{G}_{\bullet})+t^R_{b-1}(\mathbf{G}_{\bullet})+2d\}\\&\geq
\max \{t^R_{a}(\mathbf{F}_{\bullet})+t^R_{b}(\mathbf{F}_{\bullet}),\; t^R_{a-1}(\mathbf{G}_{\bullet})+t^R_{b-1}(\mathbf{G}_{\bullet})+2d\}\\&=
\max \{t^R_{a}(\mathbf{F}_{\bullet})+t^R_{b}(\mathbf{F}_{\bullet}),\; t^R_{a-1}(\mathbf{G}_{\bullet})+d+t^R_{b-1}(\mathbf{G}_{\bullet})+d\}\\&\geq
\max \{t^R_{a}(\mathbf{F}_{\bullet})+t^R_{b}(\mathbf{F}_{\bullet}),\; t^R_{a-1}(\mathbf{G}_{\bullet})+t^R_1(\mathbf{G}_{\bullet})+t^R_{b-1}(\mathbf{G}_{\bullet})+d\}\\&\geq
\max \{t^R_{a}(\mathbf{F}_{\bullet})+t^R_{b}(\mathbf{F}_{\bullet}),\; t^R_{a}(\mathbf{G}_{\bullet})+t^R_{b-1}(\mathbf{G}_{\bullet})+d\}\\&\geq 
\max\{t^R_{a+b}(\mathbf{F}_{\bullet}),\; t^R_{a+b-1}(\mathbf{G}_{\bullet})+d\}\\&=
\max\{t^R_{a+b}(\mathbf{F}_{\bullet}),\; t^R_{a+b-1}(\mathbf{G}_{\bullet}(-d))\}\\&=
t^R_{a+b}(\mathfrak{F}_{\bullet}).
\end{align*}
		Hence, the assertion follows.
	\end{proof}
	
	In Theorem \ref{subad}, if we drop the condition $t_1^R(R/(I:f)) \leq d$, then the result need no be true. We illustrate this by the following example.
	\begin{example}{\em
Let $R = 
\mathbb{K}[x_1,x_2,x_3,x_4,x_5,x_6,x_7,y_1,y_2,y_3,y_4,y_5,y_6]$.
Let $f=x_1y_6x_7-x_6y_1x_7$ and $I=(x_1y_2-x_2y_1,x_2y_3-x_3y_2,
x_3y_4-x_4y_3,x_4y_5-x_5y_4,x_5y_6-x_6y_5).$
Since $x_7$ is regular modulo $I$, $I:f =I:(x_1y_6-x_6y_1)$. Therefore, it follows from
\cite[Theorem 2.4]{Leila18} that the mapping cone applied to $0
\to [R/(I:f)](-3) \xrightarrow{\cdot f} R/I$ gives the minimal
free resolution of $R/(I,f)$. Using Macaulay2 \cite{M2}, we get
\\[1ex]
\noindent
\begin{minipage}{\linewidth}
	\begin{minipage}{0.45\linewidth}
		\begin{table}[H]
			\centering
			\caption{Betti diagram of $R/I$}
			\scalebox{1}{%
				\begin{tabular}{c|cccccc}
					&0&1&2&3&4&5\\
					\hline 
					\text{0:}&1&\text{.}&\text{.}&\text{.}&\text{.}&\text{.}\\\text{1:}&\text{.}&5&\text{.}&\text{.}&\text{.}&\text{.}\\\text{2:}&\text{.}&\text{.}&10&\text{.}&\text{.}&\text{.}\\
								\text{3:}&\text{.}&\text{.}&\text{.}&10&\text{.}&\text{.}\\
								\text{4:}&\text{.}&\text{.}&\text{.}&\text{.}&5&\text{.}\\
								\text{5:}&\text{.}&\text{.}&\text{.}&\text{.}&\text{.}&1\\
						\end{tabular}}\label{betti1}
					\end{table} 
				\end{minipage}
				\begin{minipage}{0.52\linewidth}
					\begin{table}[H]
						\centering
						\caption{Betti diagram of $R/(I:f)$}
						\scalebox{1}{%
							\begin{tabular}{c|cccccc}
								&0&1&2&3&4&5\\
								\text{0:}&1&\text{.}&\text{.}&\text{.}&\text{.}&\text{.}\\
								\text{1:}&\text{.}&5&\text{.}&\text{.}&\text{.}&\text{.}\\
								\text{2:}&\text{.}&\text{.}&10&\text{.}&\text{.}&\text{.}\\
								\text{3:}&\text{.}&5&24&55&40&10\\	
						\end{tabular}}\label{betti2}
					\end{table}
				\end{minipage}
			\end{minipage}
			\begin{table}[H]
				\centering
				\caption{Betti diagram of $R/(I,f)$}
				\scalebox{1}{%
					\begin{tabular}{c|ccccccc}
						&0&1&2&3&4&5&6\\
						\hline 
						\text{0:}&1&\text{.}&\text{.}&\text{.}&\text{.}&\text{.}&\text{.}\\
						\text{1:}&\text{.}&5&\text{.}&\text{.}&\text{.}&\text{.}&\text{.}\\
						\text{2:}&\text{.}&1&10&\text{.}&\text{.}&\text{.}&\text{.}\\
						\text{3:}&\text{.}&\text{.}&5&10&\text{.}&\text{.}&\text{.}\\
						\text{4:}&\text{.}&\text{.}&\text{.}&10&5&\text{.}&\text{.}\\
						\text{5:}&\text{.}&\text{.}&5&24&55&41&10\\
				\end{tabular}}\label{betti3}
			\end{table}
			It follows from Tables \ref{betti1} and \ref{betti2} that
			subadditivity holds for $R/I$ and $R/(I:f)$. Also, $t_1^R(R/(I:f))
			=4>3$. Observe from Table \ref{betti3} that $t_1^R(R/(I,f)) =3$ and $t_2^R(R/(I,f))
			=7>2t_1^R(R/(I,f))$. Thus, subadditivity does not hold for
			$R/(I,f)$. } \qed
	\end{example}
	In the following result, we prove that we can drop the condition $t_1^R(R/(I:f)) \leq d$, if $I:f=I$.
	\begin{theorem}\label{rsub}
		Let $I \subset R$ be a homogeneous ideal, and $f \in R$ be a
		homogeneous polynomial of degree $d>0$ such that $ I:f=I$. If subadditivity holds for
		$R/I$, then the subadditivity holds for $R/(I,f)$.
	\end{theorem}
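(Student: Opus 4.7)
The plan is to exploit the hypothesis $I:f=I$, which says that $f$ is a nonzerodivisor on $R/I$, to obtain a short exact sequence
$$0 \to (R/I)(-d) \xrightarrow{\cdot f} R/I \to R/(I,f) \to 0.$$
This produces a very clean relation between Betti numbers and lets me bypass the mapping cone argument of Theorem \ref{subad} entirely. In particular, the hypothesis $t_1^R(R/(I:f)) \leq d$ required there is simply not needed, since we will work with a minimal-resolution identity rather than an inequality coming from a possibly non-minimal mapping cone.

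Applying $\tor_\bullet^R(\kk,-)$ to the sequence produces a long exact sequence whose connecting maps are induced by multiplication by $f$ on the Tor groups. Since $f$ lies in the graded maximal ideal $\mathfrak{m}$ and $\tor_i^R(\kk,-)$ is annihilated by $\mathfrak{m}$, these maps all vanish. The long exact sequence therefore splits into short exact sequences
$$0 \to \tor_i^R(\kk, R/I)_j \to \tor_i^R(\kk, R/(I,f))_j \to \tor_{i-1}^R(\kk, R/I)_{j-d} \to 0.$$
Taking dimensions gives the identity
$$\beta_{i,j}^R(R/(I,f)) \;=\; \beta_{i,j}^R(R/I) + \beta_{i-1,j-d}^R(R/I),$$
from which I immediately read off
$$t_i^R(R/(I,f)) \;=\; \max\{t_i^R(R/I),\; t_{i-1}^R(R/I)+d\},$$
together with the trivial bounds $t_i^R(R/I) \leq t_i^R(R/(I,f))$ and $t_{i-1}^R(R/I)+d \leq t_i^R(R/(I,f))$.

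Subadditivity for $R/(I,f)$ then reduces to a short case analysis. Fix $a,b\geq 1$ with $a+b\leq \pdim(R/(I,f))$. If $t_{a+b}^R(R/(I,f)) = t_{a+b}^R(R/I)$, apply subadditivity of $R/I$ directly and combine with the bounds above. If instead $t_{a+b}^R(R/(I,f)) = t_{a+b-1}^R(R/I)+d$, assume without loss of generality $b\geq 1$, apply subadditivity of $R/I$ to the pair $(a,b-1)$ to estimate $t_{a+b-1}^R(R/I)\leq t_a^R(R/I)+t_{b-1}^R(R/I)$, and regroup the shift $+d$ with $t_{b-1}^R(R/I)$ to arrive at $t_b^R(R/(I,f))$. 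The only conceptual step is the vanishing of the Tor connecting map; the rest is bookkeeping, and I do not anticipate any genuine obstacle.
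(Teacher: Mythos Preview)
Your proof is correct and is essentially the same as the paper's. Both arguments derive the identity $t_i^R(R/(I,f)) = \max\{t_i^R(R/I),\, t_{i-1}^R(R/I)+d\}$ and then verify subadditivity by elementary manipulation of maxima; the paper obtains the Betti-number identity by noting that the mapping cone of $\cdot f$ is already the \emph{minimal} free resolution of $R/(I,f)$, while you obtain it from the vanishing of the $\cdot f$-maps on $\tor^R(\kk,-)$, which is exactly the same fact in Tor language. (Incidentally, the paper's proof of this theorem does not invoke Theorem~\ref{subad} either, so you are not bypassing anything the paper uses.)
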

	\begin{proof}
		Let $(\mathbf{F}_{\bullet}, \delta^{\mathbf{F}}_{\bullet})$ 
		be the minimal free resolution of $R/I$. Note
		that $(\mathbf{F}_{\bullet}(-d),\delta^{\mathbf{F}}_{\bullet})$ is the minimal free resolution
		of $[R/I](-d)$. 	Since $I:f=I$, the mapping cone construction applied to the map $[R/(I:f)](-d)  \xrightarrow{\cdot f} R/I$ gives the minimal free resolution of $R/(I,f)$. Therefore, for each $ 1 \leq i \leq \text{pd}(R/(I,f))$, $$t_i^R\left(\frac{R}{(I,f)}\right) = \max\Big\{t_i^R\left(\frac{R}{I}\right),t_{i-1}^R\left(\frac{R}{I}(-d)\right)\Big\}=\max\Big\{t_i^R\left(\frac{R}{I}\right),t_{i-1}^R\left(\frac{R}{I}\right)+d\Big\}.$$ Thus, for all $a,b \geq
		1$ with $a+b \leq \text{pd}(R/(I,f))$, 
		\begin{align*}
			t^R_{a+b}\left(\frac{R}{(I,f)}\right)&= 
			\max\Big\{t^R_{a+b}\left(\frac{R}{I}\right),\; t^R_{a+b-1}\left(\frac{R}{I}\right)+d\Big\}\\& \leq  \max \Big\{t^R_{a}\left(\frac{R}{I}\right)+t^R_{b}\left(\frac{R}{I}\right),\; t^R_{a}\left(\frac{R}{I}\right)+t^R_{b-1}\left(\frac{R}{I}\right)+d\Big\}\\& \leq
			\max\Big\{t^R_{a}\left(\frac{R}{I}\right),\; t^R_{a-1}\left(\frac{R}{I}\right)+d\Big\}+\max\Big\{t^R_{b}\left(\frac{R}{I}\right),\; t^R_{b-1}\left(\frac{R}{I}\right)+d\Big\}\\&=
			t^R_{a}\left(\frac{R}{(I,f)}\right)+t^R_{b}\left(\frac{R}{(I,f)}\right)			
		\end{align*}
		Hence, the assertion follows.	
	\end{proof}
	
	As an immediate consequence, we derive a result of McCullough and
	Seceleanu:
	\begin{corollary}\cite[Proposition 4.1]{MS20}\label{csub}
		If $I$ is a homogeneous complete intersection, then
		subadditivity holds for $R/I$.
	\end{corollary}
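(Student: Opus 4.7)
The plan is to deduce this by a straightforward induction on the number of generators of the complete intersection, using Theorem \ref{rsub} as the engine.

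Let $I = (f_1, \ldots, f_c)$ be a homogeneous complete intersection, so $f_1, \ldots, f_c$ form a regular sequence of positive degrees. I proceed by induction on $c$. For the base case $c = 0$, we have $I = 0$ and $R/I = R$ has a trivial resolution for which subadditivity is vacuously true. (Equivalently, one can take $c = 1$: then $\pdim(R/I) = 1$, and the condition $a, b \geq 1$ with $a+b \leq 1$ is vacuous.)

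For the inductive step, assume the result holds for all homogeneous complete intersections generated by at most $c-1$ elements, and set $J = (f_1, \ldots, f_{c-1})$. Then $J$ is a homogeneous complete intersection, so by the induction hypothesis subadditivity holds for $R/J$. Since $f_1, \ldots, f_c$ is a regular sequence, $f_c$ is a nonzerodivisor on $R/J$, which is precisely the condition $J : f_c = J$. Applying Theorem \ref{rsub} to the ideal $J$ with $f = f_c$ and $d = \deg f_c$, we conclude that subadditivity holds for $R/(J, f_c) = R/I$, completing the induction.

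There is no serious obstacle here; the only thing to check is that the hypothesis of Theorem \ref{rsub}, namely $J : f_c = J$, is exactly the regularity statement for $f_c$ modulo $J$, which is built into the definition of a regular sequence. Thus the corollary follows as an immediate consequence of Theorem \ref{rsub}.
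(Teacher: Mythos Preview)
Your proof is correct and matches the paper's approach: the paper presents this corollary as an immediate consequence of Theorem \ref{rsub}, and your induction on the number of generators, invoking Theorem \ref{rsub} at each step via the observation that $J:f_c = J$ is exactly the regularity of $f_c$ on $R/J$, is precisely the intended argument.
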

	The following result says that if $I$ and $J$ are homogeneous ideals
	in distinct polynomial rings over same field and subadditivity holds
	for these two ideals, then subadditivity holds for the ideal generated
	by their sum in the tensor product of these two polynomial rings.
	\begin{theorem}\label{tsub}
		Let $I,J$  be homogeneous ideals of $R$ such that there exist minimal
		generating sets for $I$ and $J$ in disjoint sets of variables. If
		subadditivity holds for $R/I$ and $R/J$, then subadditivity holds
		for $R/(I+J)$.
	\end{theorem}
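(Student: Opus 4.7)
The approach is to exploit the fact that when $I$ and $J$ live in disjoint sets of variables, the minimal free resolution of $R/(I+J)$ decomposes as a K\"unneth-style tensor product, and then combine this with the subadditivity hypotheses on $R/I$ and $R/J$.

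More precisely, let $R_1 \subseteq R$ be the polynomial subring generated by the variables appearing in the minimal generators of $I$, and $R_2$ analogously for $J$. Since $R$ is polynomial (hence flat) over $R_1 \otimes_\kk R_2$, the graded Betti numbers of $R/I$, $R/J$, and $R/(I+J)$ are preserved under this base change, so one may assume $R = R_1 \otimes_\kk R_2$. Then $R/(I+J) \cong (R_1/I) \otimes_\kk (R_2/J)$, and if $F_\bullet$ and $G_\bullet$ are the minimal graded free resolutions of $R_1/I$ and $R_2/J$ respectively, then $F_\bullet \otimes_\kk G_\bullet$ is a minimal graded free resolution of $R/(I+J)$ over $R$ (minimality because the differentials of both factors have entries in the maximal graded ideal and tensoring over $\kk$ is exact). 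This gives the identity
\[
\beta_{i,j}^R(R/(I+J)) \;=\; \sum_{p+q=i,\;r+s=j} \beta_{p,r}^R(R/I)\,\beta_{q,s}^R(R/J),
\]
so, writing $P = \pdim(R/I)$ and $Q = \pdim(R/J)$, one has $\pdim(R/(I+J)) = P+Q$ and
\[
t_i^R(R/(I+J)) \;=\; \max\left\{\,t_p^R(R/I) + t_q^R(R/J)\;:\;p+q=i,\;0\le p\le P,\;0\le q\le Q\,\right\}.
\]

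With this combinatorial description of $t_i$, fix $a,b \geq 0$ with $a+b \leq P+Q$, and choose $p,q$ realising $t_{a+b}^R(R/(I+J)) = t_p^R(R/I) + t_q^R(R/J)$, with $p+q = a+b$, $p \leq P$, $q \leq Q$. I would then produce a splitting $p = p_1+p_2$, $q = q_1+q_2$ with $p_1+q_1 = a$ and $p_2+q_2 = b$ whose indices lie in the admissible ranges: if $p \le a$, take $(p_1,q_1,p_2,q_2) = (p,\,a-p,\,0,\,b)$; if $p > a$, take $(a,\,0,\,p-a,\,q)$. A short check using $p+q = a+b$, $p \le P$, and $q \le Q$ confirms admissibility (for instance, in the first case $a - p \le q \le Q$ and $b = p + q - a \le q \le Q$; in the second case $a < p \le P$).

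Finally, with such a splitting in hand, the K\"unneth formula gives $t_a^R(R/(I+J)) \geq t_{p_1}^R(R/I) + t_{q_1}^R(R/J)$ and $t_b^R(R/(I+J)) \geq t_{p_2}^R(R/I) + t_{q_2}^R(R/J)$. Summing and applying the subadditivity of $R/I$ to $p = p_1 + p_2$ and of $R/J$ to $q = q_1 + q_2$ yields
\[
t_a^R(R/(I+J)) + t_b^R(R/(I+J)) \;\geq\; t_p^R(R/I) + t_q^R(R/J) \;=\; t_{a+b}^R(R/(I+J)),
\]
which is the desired inequality. The main step, and the one most worth being careful about, is the K\"unneth identity for $F_\bullet \otimes_\kk G_\bullet$ and the verification that it is a \emph{minimal} free resolution over $R$; the index splitting and the final application of the two subadditivity hypotheses are then routine.
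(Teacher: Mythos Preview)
Your proof is correct and follows essentially the same approach as the paper's: both identify the minimal free resolution of $R/(I+J)$ as the tensor product of those of $R/I$ and $R/J$, derive the formula $t_i^R(R/(I+J)) = \max_{p+q=i}\{t_p^R(R/I)+t_q^R(R/J)\}$, and then split an index $r$ realising $t_{a+b}$ into pieces summing to $a$ and $b$ before applying the subadditivity hypotheses on $R/I$ and $R/J$. The only difference is stylistic --- the paper bounds the whole max at once, while you fix a maximiser and track the admissible ranges $p\le P$, $q\le Q$ explicitly --- but the substance is identical.
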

	\begin{proof}
		Since the minimal generating sets of $I$ and $J$ are in disjoint
		variables,  the tensor product of the minimal free resolution of
		$R/I$ and $R/J$ provides the minimal free resolution of $R/(I+J)$.
		In particular, 
		$$\beta_{i,j}^R\left(\frac{R}{I+J}\right) = \sum_{0 \leq r\leq i,\;r
			\leq s \leq j}
		\beta_{r,s}^R\left(\frac{R}{I}\right)\beta_{i-r,j-s}^R\left(\frac{R}{J}\right).$$
		Thus, it is straightforward to verify that for each $i \geq 1$, 
		$$t_i^R\left(\frac{R}{I+J}\right)=\max_{0\leq r\leq i}\Big
		\{t_r^R\left(\frac{R}{I}\right)+t_{i-r}^R\left(\frac{R}{J}\right)\Big\}.$$
		Let $a,b \geq 1$ such that $a+b \leq \text{pd}(R/(I+J))$. Then, we
		have 
		\begin{align*}
			t_{a+b}^R\left(\frac{R}{I+J}\right) & =\max_{0 \leq r \leq a+b}\Big
			\{t_r^R\left(\frac{R}{I}\right)+t_{a+b-r}^R\left(\frac{R}{J}\right)\Big\}
			\\ &=\max_{0 \leq i\leq a, \;0 \leq k \leq b}\Big
			\{t_{{i+k}}^R\left(\frac{R}{I}\right)+t_{a+b-i-k}^R\left(\frac{R}{J}\right)\Big\}
			\\ &\leq \max_{0 \leq i \leq a, \; 0 \leq k \leq b}\Big
			\{t_i^R\left(\frac{R}{I}\right)+t_k^R\left(\frac{R}{I}\right)+t_{a-i}^R\left(\frac{R}{J}\right)+t_{b-k}^R\left(\frac{R}{J}\right)\Big\}
			\\ & \leq \max_{0 \leq i \leq a}\Big
			\{t_i^R\left(\frac{R}{I}\right)+t_{a-i}^R\left(\frac{R}{J}\right)\Big\}+\max_{0\leq k\leq b}\Big\{t_k^R\left(\frac{R}{I}\right)+t_{b-k}^R\left(\frac{R}{J}\right)\Big\}\\&
			=t_{a}^R\left(\frac{R}{I+J}\right)+t_{b}^R\left(\frac{R}{I+J}\right).
		\end{align*}
		This completes the proof.
	\end{proof}
	\subsection{Edge ideals of graphs} In this subsection, we study the subadditivity problem for quadratic squarefree
	monomial ideals. First, we recall some notion from graph theory.
	
	Let $G$  be a  finite simple graph with vertex set $V(G)$ and edge set
	$E(G)$. If $E(G)=\emptyset$, then we say that $G$ is a {\it trivial
		(or empty) graph}. For $A \subseteq V(G)$, $G[A]$ denotes the
	\textit{induced subgraph} of $G$ on the vertex set $A$, i.e., for $i,
	j \in A$, $\{i,j\} \in E(G[A])$ if and only if $ \{i,j\} \in E(G)$.
	For $A \subset V(G)$, $G \setminus A$ denotes the  induced subgraph of
	$G$ on the vertex set $V(G) \setminus A$.  The {\it neighborhood of a
		vertex} $v$, denoted by $N_G(v),$ is defined as $\{u \in V(G) :
	\{u,v\} \in E(G)\}$. We set $N_G[v]=N_G(v) \cup \{v\}$. The {\it degree of
		a vertex} $v$ is  $|N_G(v)|$, and it is denoted by $\deg_G(v)$. If $\deg_G(v)=1$, then we say that $v$ is a {\it pendant
		vertex}. For $e \in E(G)$, $G\setminus e$ is the graph on the vertex
	set $V(G)$ and edge set $E(G) \setminus \{e\}$.  
	
	A connected graph $G$ is said to be a {\it cycle} if $\deg_G(v)=2$, for all $v \in V(G)$.  A cycle $G$ is a {\it $k$-cycle}
	if $|V(G)|=k$, and it is denoted by $C_k$. A  \textit{tree} is a
	connected graph $G$ such that $k$-cycle is not an induced subgraph of $G$,  for
	all $k \geq 3$.  A \textit{path} is a tree which has exactly two
	pendant vertices.  We say that
	$G$ is a {\it chordal graph} if  $k$-cycle is not an induced subgraph
	of $G$,  for all $k \geq 4$.

	A set of pairwise disjoint edges in a graph G is called a {\it
		matching}.  If a matching is an induced subgraph, then such matching
	is called an {\it induced matching}. The largest size of an induced
	matching in $G$ is called {\it induced matching number} of $G$, and it
	is denoted by $\nu(G).$  A subset $C \subset V(G)$ is said to be a
	{\it vertex cover} of $G$ if for each $e \in  E(G)$, $e \cap C \neq
	\emptyset$. If $C$ is minimal with respect to inclusion, then $C$ is
	called a {\it minimal vertex cover} of $G$.

	Below, we fix some notation for the rest of the paper. 
	\begin{notation}
		If $G$ is a graph on $n$ vertices, then we set
		$V(G)=\{x_1,\ldots,x_n\}$ and the edge ideal $I(G) = (x_ix_j ~ : ~
		\{x_i, x_j\} \in E(G)\})$ to be an ideal of
		$R=\mathbb{K}[x_1,\ldots,x_n]$. Also, we set $t_a(G) :=
		t_a^R(R/I(G))$. For a graph $G$, by subadditivity of $G$
		we mean subadditivity of $R/I(G)$.
	\end{notation}
	
	We now begin the study of the subadditivity problem for edge ideals of
	graphs.
	
	\begin{proposition}\label{indsub}
		For $b \leq \nu(G)$, and $a \geq 1$ with $a+b \leq \pdim(R/I(G))$,
		$$t_{a+b}(G)\leq t_{a}(G)+t_b(G).$$
	\end{proposition}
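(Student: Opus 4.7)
The plan is to combine two facts. First, the Herzog--Srinivasan inequality (cited in the introduction) applied to the monomial ideal $I(G)$ gives
\[
t_{c}(G) \le t_{c-1}(G) + t_1(G) = t_{c-1}(G) + 2 \quad \text{for all } 1 \le c \le \pdim(R/I(G)),
\]
since $I(G)$ is generated in degree $2$ (automatic once $b\ge 1$, as the hypothesis $\nu(G)\ge b\ge 1$ forces $G$ to have an edge). Second, the induced matching hypothesis $\nu(G) \ge b$ forces the lower bound $t_b(G) \ge 2b$. Granting both, iterating the first inequality from position $a$ up to position $a+b$ yields $t_{a+b}(G) \le t_a(G) + 2b$, and then the second fact replaces $2b$ by $t_b(G)$, giving $t_{a+b}(G) \le t_a(G) + t_b(G)$. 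The $b=0$ case is immediate from $t_0(G)=0$.

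For the lower bound $t_b(G) \ge 2b$, take an induced matching $M = \{e_1,\dots,e_b\}$ and let $H = G[V(M)]$ be the induced subgraph on its $2b$ endpoints, so $I(H)$ is a regular sequence of $b$ quadrics in $2b$ variables. The Koszul complex on these quadrics yields $\beta_{b,2b}(R/I(H)) = 1$. Since multigraded Betti numbers of a squarefree monomial ideal at a squarefree multidegree depend only on the induced subgraph on the support of that multidegree (Hochster's formula), the contribution at the multidegree $\prod_{v \in V(M)} v$ persists when passing from $H$ to $G$, giving $\beta_{b,2b}(R/I(G)) \ge 1$ and hence $t_b(G) \ge 2b$.

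The only nontrivial step is this last one, and it is standard; it can alternatively be quoted as a direct consequence of Katzman's formula identifying $\beta_{i,2i}(R/I(G))$ with the number of induced matchings of size $i$ in $G$. Everything else is a short iteration using results already stated in the introduction.
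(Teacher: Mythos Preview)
Your proof is correct and follows essentially the same approach as the paper: iterate the Herzog--Srinivasan inequality $b$ times to obtain $t_{a+b}(G)\le t_a(G)+2b$, and then use Katzman's result (which the paper cites as \cite[Lemma 2.2]{Kat2006}) that $t_b(G)=2b$ for $b\le\nu(G)$. The only difference is that you supply a direct argument for the inequality $t_b(G)\ge 2b$ via Hochster's formula, whereas the paper simply quotes Katzman.
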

	\begin{proof}
		Let $b \leq \nu(G)$, and $a \geq 1$ be such that $a+b \leq
		\pdim(R/I(G))$. It follows from \cite[Lemma 2.2]{Kat2006},
		$t_b(G)=2b=b \cdot t_1(G)$. Now, apply \cite[Corollary 4]{HS}
		$b$ times, we get $t_{a+b}(G) \leq t_a(G)+b \cdot t_1(G)$, which completes the proof.
	\end{proof}
	It follows from \cite[Lemma 2.2]{Kat2006} that $t_a(G) < 2a$ for all $a >
	\nu(G)$. In the following auxiliary lemma, we compute $t_a(G)$ for $a =
	\nu(G) + 1$.
	\begin{lemma}\label{tb}
		If $G$ is not a disjoint union of edges, then 
		$t_{\nu(G)+1}(G) = 2\nu(G) + 1$.
	\end{lemma}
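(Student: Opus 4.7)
My plan is to prove the equality by checking the upper bound and lower bound separately. The upper bound is easy: since $\nu(G)+1 > \nu(G)$, the cited result \cite[Lemma 2.2]{Kat2006} gives $t_{\nu(G)+1}(G) < 2(\nu(G)+1)$, i.e., $t_{\nu(G)+1}(G) \leq 2\nu(G)+1$. The substance of the lemma is the lower bound $t_{\nu(G)+1}(G) \geq 2\nu(G)+1$, which I intend to establish by exhibiting a non-zero Betti number $\beta_{\nu(G)+1,\,2\nu(G)+1}^R(R/I(G))$ using Hochster's formula. Recall that for the edge ideal, this amounts to finding $W \subseteq V(G)$ with $|W| = 2\nu(G)+1$ such that $\Hred_{\nu(G)-1}(\mathrm{Ind}(G[W]);\kk) \neq 0$, where $\mathrm{Ind}$ denotes the independence complex.

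Since isolated vertices of $G$ correspond to regular variables on $R/I(G)$, they affect neither $t_a(G)$ nor $\nu(G)$, so I may assume $G$ has no isolated vertices. Fix a maximum induced matching $M = \{e_1, \ldots, e_{\nu(G)}\}$ and set $V(M) = \bigcup V(e_i)$. Because $G$ is not a disjoint union of edges and has no isolated vertices, one has $V(G) \setminus V(M) \neq \emptyset$; moreover, I claim some $v \in V(G) \setminus V(M)$ has a neighbor in $V(M)$. Otherwise every vertex outside $V(M)$ would have all neighbors outside $V(M)$, and picking any edge $\{v,w\}$ of $G$ with $v,w \notin V(M)$ (such exists since $G$ has no isolated vertex) would give an induced matching $M \cup \{v w\}$ of size $\nu(G)+1$, a contradiction. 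Fix such a $v$ and set $W := V(M) \cup \{v\}$; then $|W| = 2\nu(G)+1$.

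Since $M$ is an induced matching, $G[V(M)]$ is a disjoint union of edges, so $\mathrm{Ind}(G[V(M)])$ is the join of $\nu(G)$ copies of $S^0$, hence is homeomorphic to $S^{\nu(G)-1}$. Writing $S := N_G(v) \cap V(M)$, the complex $\mathrm{Ind}(G[W])$ decomposes as
\[
\mathrm{Ind}(G[W]) \;=\; \mathrm{Ind}(G[V(M)]) \;\cup\; \bigl(\{v\} * \mathrm{Ind}(G[V(M) \setminus S])\bigr),
\]
with intersection $\mathrm{Ind}(G[V(M) \setminus S])$. The second term is a cone, so has vanishing reduced homology, and $G[V(M)\setminus S]$ is a disjoint union of some single vertices (from edges of $M$ meeting $S$ in one vertex) and some full edges of $M$. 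From this, $\mathrm{Ind}(G[V(M)\setminus S])$ is either contractible (when some $e_i$ has exactly one vertex in $S$) or a sphere of dimension strictly less than $\nu(G)-1$ (when each $e_i$ is either wholly contained in $S$ or disjoint from $S$, noting that at least one is wholly contained in $S$ because $S \neq \emptyset$). In either case, the Mayer--Vietoris sequence
\[
\Hred_{\nu(G)-1}(\mathrm{Ind}(G[V(M)\setminus S])) \to \Hred_{\nu(G)-1}(S^{\nu(G)-1}) \to \Hred_{\nu(G)-1}(\mathrm{Ind}(G[W])) \to \Hred_{\nu(G)-2}(\mathrm{Ind}(G[V(M)\setminus S]))
\]
forces $\Hred_{\nu(G)-1}(\mathrm{Ind}(G[W])) \neq 0$, since the left map has zero source, so $\kk = \Hred_{\nu(G)-1}(S^{\nu(G)-1})$ injects into the middle term.

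The main obstacle is the boundary case $\nu(G) = 1$ and the case where $S$ equals both endpoints of exactly one $e_i$ and nothing else — there the connecting map contribution $\Hred_{\nu(G)-2}(\mathrm{Ind}(G[V(M)\setminus S]))$ is itself nonzero, so some care is needed to verify Mayer--Vietoris bookkeeping and the convention $\Hred_{-1}(\{\emptyset\}) = \kk$ when $\nu(G)=1$; but in every case the injection $\kk \hookrightarrow \Hred_{\nu(G)-1}(\mathrm{Ind}(G[W]))$ survives, and Hochster's formula then yields $\beta_{\nu(G)+1,\,2\nu(G)+1}^R(R/I(G)) \neq 0$, completing the lower bound.
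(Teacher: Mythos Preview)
Your argument is correct. The approach, however, differs substantially from the paper's.

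The paper first splits into two cases according to whether $\nu(G) < \h(I(G))$ or $\nu(G) = \h(I(G))$. In the first case it invokes \cite[Lemma~6.1]{ACI}, which forces $t_{b-1}(G) < t_b(G)$ for $b \leq \h(I(G))$, and combines this with $t_{b-1}(G) = 2(b-1)$ to conclude. Only in the second case does the paper build an explicit induced subgraph on $2\nu(G)+1$ vertices, and there it exploits the extra hypothesis $\nu(G) = \h(I(G))$ to arrange that the outside vertex $z$ is adjacent only to one fixed side of the matching; the Betti number is then read off algebraically via the splitting formula of \cite[Theorem~4.6]{HV07} rather than topologically.

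Your route is more uniform: you use the same set $W = V(M) \cup \{v\}$ in every case, make no reference to $\h(I(G))$, and compute $\Hred_{\nu(G)-1}(\mathrm{Ind}(G[W]))$ directly by a Mayer--Vietoris decomposition into the sphere $\mathrm{Ind}(G[V(M)]) \cong S^{\nu(G)-1}$ and a cone over $v$. This avoids the external inputs \cite[Lemma~6.1]{ACI} and \cite[Theorem~4.6]{HV07} and replaces the case analysis by a single topological step. The paper's approach, on the other hand, highlights the role of the height and gives finer structural information in the equality case $\nu(G) = \h(I(G))$, which may be of independent interest. Both arguments ultimately locate the nonvanishing Betti number on the same kind of vertex set.
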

	\begin{proof}
		Set $b = \nu(G) + 1$. It is easy to see that if $C$ is a minimal
		vertex cover of $G$, then $\nu(G) \leq |C|$. Therefore, $\nu(G)
		\leq \h(I(G))$.
		By \cite[Lemma 2.2]{Kat2006}, $t_{b-1}(G)=2b-2$ and
		$t_b(G) <2b$. If $\nu(G) < \h(I(G))$, then it follows from
		\cite[Lemma 6.1]{ACI} that $t_{b-1}(G)<t_b(G)$.
		Thus, $t_b(G)=2b-1 = 2\nu(G) + 1$. Suppose $\nu(G) = \h(I(G))$.
		Let $\{e_1, \ldots, e_{\nu(G)}\}$ be an induced matching in
		$G$. Since $G$ is not a disjoint union of edges, there is an edge $f$
		such that $f \cap e_i \neq \emptyset$ for some $1 \leq i \leq \nu(G)$.
		Set $e_i = \{x_i, y_i\}$. Without loss of generality, we may assume
		that $\{x_1, \ldots, x_{\nu(G)}\}$ is a vertex cover and $f = \{x_1,
		z\}$. Since $\h(I(G)) = \nu(G)$, $\{y_i, z\} \notin E(G)$ for any
		$1 \leq i \leq \nu(G)$. Without loss of generality, we may assume that
		$N_G(z) = \{x_1, \ldots, x_k\}$. Let $H$ denote the induced subgraph
		of $G$ on the vertex set $\{x_1,\ldots, x_{\nu(G)}, y_1, \ldots,
		y_{\nu(G)}, z\}$. Then $H$ is a graph as shown in Figure \ref{fig1}.
		Let $H_1$ be the induced subgraph on $\{z, x_1, \ldots, x_k, y_1,
		\ldots, y_k\}$ and $H_2$ be the induced subgraph on $\{x_{k+1},
		\ldots, x_{\nu(G)}, y_{k+1}, \ldots, y_{\nu(G)}\}$. Since the minimal 
		free resolution of $R/I(H)$ is given by the tensor product of
		minimal free resolutions of $R/I(H_1)$ and $R/I(H_2)$, it can be seen that
		$\beta_{i,j}^R(R/I(H)) = \sum_{0 \leq r \leq s}\beta_{i-r,j-s}^R(R/I(H_1))
		\beta_{r,s}^R(R/I(H_2))$.  To prove
		the main assertion, it is enough to prove that $\beta^R_{b,
			2b-1}(R/I(H)) \neq 0$ and to prove this statement, we prove that $\beta_{k+1, 2k+1}^R(R/I(H_1)) \neq 0$ and $
		\beta_{b-1-k, 2(b-1-k)}^R(R/I(H_2)) \neq 0$.
		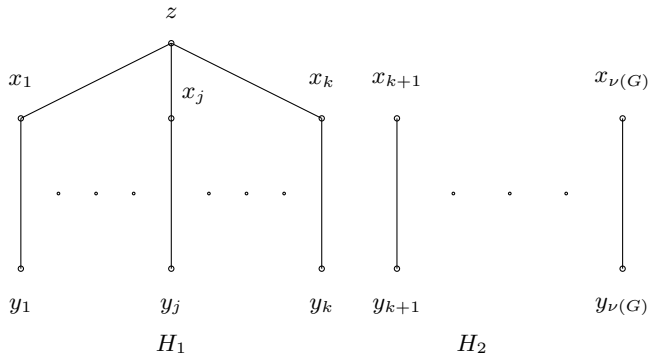
\begin{figure}[H]
			
			\begin{tikzpicture}[scale=1]
			
			\draw (-4,0)-- (-4,-2);
			
			\draw (0,0)-- (0,-2);
			
			\draw (1,0)-- (1,-2);
			
			\draw  (4,0)-- (4,-2);
			
			\draw (-2,1)-- (-4,0);
			
			\draw (-2,1)-- (0,0);
			
			\draw (-2,0)-- (-2,-2);
			
			\draw (-2,1)-- (-2,0);
			
			\begin{scriptsize}
			
			\draw  (-4,-2) circle (1pt);
			
			\draw   (-4,-2.5) node {$y_1$};
			
			\draw   (-4,0) circle (1pt);
			
			\draw   (-4,0.5) node {$x_1$};
			\draw   (-3,-1) circle (.5pt);
			\draw   (-3.5,-1) circle (.5pt);
			\draw   (-2.5,-1) circle (.5pt);
			\draw   (-1.5,-1) circle (.5pt);
			\draw   (-1,-1) circle (.5pt);
			\draw   (-0.5,-1) circle (.5pt);
			
			\draw   (0,0) circle (1pt);
			
			\draw   (0,0.5) node {$x_k$};
			
			\draw   (0,-2) circle (1pt);
			
			\draw   (0,-2.5) node {$y_k$};
			
			\draw   (1.75,-1) circle (.5pt);
			\draw   (2.5,-1) circle (.5pt);
			\draw   (3.25,-1) circle (.5pt);
			
			\draw   (1,0) circle (1pt);
			
			\draw   (1,0.5) node {$x_{k+1}$};
			
			\draw   (1,-2) circle (1pt);
			
			\draw   (1,-2.5) node {$y_{k+1}$};
			
			\draw   (4,0) circle (1pt);
			
			\draw   (4,0.5) node {$x_{\nu(G)}$};
			
			\draw   (4,-2) circle (1pt);
			
			\draw   (4,-2.5) node {$y_{\nu(G)}$};
			
			\draw   (-2,1) circle (1pt);
			
			\draw   (-2,1.4) node {$z$};

			\draw   (-2,0) circle (1pt);
			
			\draw   (-1.7,0.3) node {$x_j$};
			
			\draw   (-2,-2) circle (1pt);
			
			\draw   (-2,-2.5) node {$y_j$};
			
			\draw (-2,-3) node {$H_1$};
			\draw (2,-3) node {$H_2$};
			\end{scriptsize}
			
			\end{tikzpicture}
			\caption{$H = H_1 \sqcup H_2$}\label{fig1}
		\end{figure}
		
		Let $H'$ be the induced subgraph of $H_1$ on $\{x_1,
		\ldots, x_k, y_1, \ldots, y_k\}$ and $H''$ be the
		induced subgraph on $\{z, x_1, \ldots, x_k\}$. Then, it follows from
		\cite[Theorem 4.6]{HV07} that 
		\[
		\beta^R_{i,j}(R/I(H_1)) = \beta_{i,j}^R(R/I(H')) + \beta_{i,j}^R(R/I(H'')) +
		\beta_{i-1,j-1}(R/I(H')).
		\]
		Since $H'$ is a disjoint union of $k$ edges, $I(H')$ is a complete intersection ideal, and thus, $\beta_{k,
			2k}^R(R/I(H')) \neq 0$. Consequently, $\beta_{k+1,
			2k+1}^R(R/I(H_1)) \neq 0$. Since $H_2$ is a disjoint union of $b-1-k$
		edges, $I(H_2)$ is complete intersection,  $\beta_{b-1-k, 2(b-1-k)}^R(R/I(H_2)) \neq 0$. Hence,
		$\beta^R_{b,2b-1}(R/I(H)) \neq 0,$
		which concludes the proof.
	\end{proof}

	Let $\Delta$ be a simplicial complex on $\{x_1, \ldots, x_n\}$. For $V
	\subseteq \{x_1, \ldots, x_n\}$, the subcomplex of $\Delta$ on 
	$V$ is $\Delta[V]=\{ F \in \Delta \; : \; F \subseteq
	V \}.$ The {\it Stanly-Reisner ideal} $I_{\Delta}$ of the simplicial
	complex $\Delta$ is the ideal generated by squarefree monomials $x_F=
	\prod_{x_i \in F} x_i$ with $F \notin \Delta$, $F \subset \{x_1,
	\ldots, x_n\}$.  Let $G$
	be a graph on the vertex set $V(G)$. A subset $U \subset V(G)$ is said
	to be an {\it independent set} if $G[U]$ is a trivial graph. Let
	$\Delta_G=\{ U \subset V(G) \; : \; U \text{ is an independent set in
	} G \}$. Then $\Delta_G$ is a simplicial complex, called the {\it
		independence complex } of $G$. It is easy to observe that $I(G)
	=I_{\Delta_G}$.
	

\begin{proposition}\label{nug+1}
Let $b =\nu(G)+1$. Then, for all $a \geq 1$ with
$a+b \leq \pdim(R/I(G))$, $$t_{a+b}(G)\leq
t_{a}(G)+t_b(G).$$
\end{proposition}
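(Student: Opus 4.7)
The plan is to split the proof into two cases based on the size of $a$ relative to $\nu(G)$. When $a \leq \nu(G)$, Proposition \ref{indsub} applies with the roles of its two arguments interchanged (valid since Prop \ref{indsub} requires only the smaller of the two indices to be $\leq \nu(G)$), giving $t_{a+b}(G) \leq t_b(G) + t_a(G)$ directly.

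For $a \geq b = \nu(G)+1$, I argue by strong induction on $|V(G)|$. Assume for contradiction that $G$ is a minimal counterexample with $t_{a+b}(G) \geq t_a(G)+2b$. By Hochster's formula there is an induced subgraph $H = G[W]$ with $|W| = t_{a+b}(G)$ and $\widetilde{H}_{|W|-(a+b)-1}(\Delta_H; \kk) \neq 0$, so $t_{a+b}(H) = |W|$. If $W \subsetneq V(G)$, then $|V(H)| < |V(G)|$; applying the inductive hypothesis to $H$ with $b_H = \nu(H)+1 \leq b$, and then bridging the remaining $b-b_H$ indices via iterated Herzog--Srinivasan, yields $t_{a+b}(H) \leq t_a(H) + 2b - 1$. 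Combined with the restriction inequality $t_a(H) \leq t_a(G)$ (also from Hochster), this contradicts $|W| \geq t_a(G) + 2b$.

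The remaining subcase $W = V(G)$ is the main obstacle. Here $t_{a+b}(G) = n$, and propagating equality through iterated Herzog--Srinivasan and Proposition \ref{indsub} forces the rigid pattern $t_{a+i}(G) = t_a(G) + 2i$ for $i = 0, 1, \ldots, b$; in particular, Katzman's Lemma 2.2 at $i = a+b$ forces $t_a(G) = n - 2b \leq 2a-1$. To eliminate this subcase I plan to iterate the Hochster step one level deeper: the nonvanishing $\beta^R_{a+b-1, n-2}(R/I(G)) \neq 0$ produces an induced subgraph $G[W_1]$ with $|W_1| = n-2$ and $t_{a+b-1}(G[W_1]) = n-2$, to which the inductive hypothesis (if $\nu(G[W_1]) < \nu(G)$) or Proposition \ref{indsub} (if $\nu(G[W_1]) = \nu(G)$) can be applied. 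Combined with the restriction bound $t_a(G[W_1]) \leq t_a(G) = n - 2b$, this should yield a contradiction in the first subcase; in the second subcase a further descent (to $G[W_2]$, and so on) or a refined topological analysis of $\Delta_G$ exploiting Lemma \ref{tb} for $G$ itself is required. This delicate recursion, reconciling the linear growth pattern of Betti shifts over $b$ consecutive positions with the constraint $\nu(G) = b-1$, is the technical core of the argument.
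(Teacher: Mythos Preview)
Your proposal has a genuine gap: the subcase $W = V(G)$ is not proved. You yourself flag it as ``the main obstacle'' and then offer only a sketch (``a further descent \ldots\ or a refined topological analysis \ldots\ is required''), which is not an argument but a hope. In fact the descent you outline does not close: when $\nu(G[W_1]) = \nu(G)$, applying the inductive hypothesis to $G[W_1]$ with indices $(a-1,b)$ only yields $t_{a-1}(G) \geq n-2b-1$, which is compatible with $t_a(G)=n-2b$ and produces no contradiction. Iterating to $W_2, W_3,\dots$ faces the same obstruction each time, and there is no mechanism forcing the induced matching number to eventually drop. Your inductive scheme simply has no base for this subcase.

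The paper's proof sidesteps this entirely with a one–step Mayer--Vietoris argument that works uniformly, regardless of whether $W=V(G)$. After locating $W$ via Hochster and checking that $H=G[W]$ has a vertex $v$ of degree at least two (else $H$ would be a matching, forcing $t_{a+b}(G)=2(a+b)$ and contradicting Katzman), one picks two neighbours $x,y$ of $v$ and writes $\Delta' = (\Delta'\setminus\{v\}) \cup (\Delta'\setminus\{x,y\})$. The Mayer--Vietoris long exact sequence then forces a nonzero Betti number of $R/I(G)$ at one of the positions $(a+b-1,\,t_{a+b}(G)-1)$, $(a+b-2,\,t_{a+b}(G)-2)$, or $(a+b-2,\,t_{a+b}(G)-3)$. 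Each of these gives $t_{a+b}(G)\le t_{a+b-1}(G)+1$ or $t_{a+b}(G)\le t_{a+b-2}(G)+3$, and now Proposition~\ref{indsub} applies directly because the second index has dropped to $b-1$ or $b-2$, both $\le \nu(G)$; combined with $t_{b-1}(G)=2(b-1)$ and $t_b(G)=2b-1$ from Lemma~\ref{tb}, the desired inequality follows. No induction on $|V(G)|$ is needed, and the problematic case $W=V(G)$ never has to be isolated.
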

\begin{proof} 
Let $\Delta$ denote the independent complex of $G$. Then $I_\Delta =
I(G)$. By Hochster's fomula \cite{Hoc77}, we get 
\begin{eqnarray}\label{HochstersFormula}
	\beta_{i,j}^R(R/I_{\Delta})=\sum_{W\subset V(G),\;
		|W|=j}\dim_{\mathbb{K}} \Hred_{j-i-1}(\Delta[W]; \mathbb{K}).
\end{eqnarray}
Taking $i = a+b$ and $j = t_{a+b}(G)$, we get
$\Hred_{l}(\Delta[W]; \mathbb{K}) \neq 0$ for some $W \subset
V(G)$, where $l=t_{a+b}(G)-(a+b)-1$. Set
$\Delta'=\Delta[W]$ and $H=G[W]$. We claim that $H$ has no
isolated vertices. If $H$ has an isolated vertex, say $x$, then $x \in
F$ for every facet $F$ of $\Delta'$. Hence $\Delta'$
is a cone of a simplicial complex and hence an acyclic complex. 
Consequently,
$\Hred_{l}(\Delta'; \mathbb{K}) = 0$, which is a contradiction.
Thus, $H$ has no isolated vertices. Now, we claim that there is a
vertex of degree at least two in $H$. If not, then every vertex of
$H$ has degree one, i.e., $H$ is a disjoint union of edges.
Therefore, $I(H)$ is a complete intersection. Since
$\beta_{a+b,t_{a+b}(G)}^R(R/I(H)) \neq 0$,
$t_{a+b}(G)=2(a+b)$, which is a contradiction to the fact
that $t_i(G) < 2i $ for $i > \nu(G)$, see \cite[Lemma
2.2]{Kat2006}. Thus, $H$ has a vertex of degree at least two. 

Let $v$ be a vertex of degree at least two in $H$. Let $x,y \in N_H(v)$ such that $x \neq y$. Observe that $\Delta'=(\Delta' \setminus \{v\}) \cup (\Delta' \setminus \{x,y\})$. Set $ \Delta_1=\Delta' \setminus \{v\} $ and $\Delta_2=\Delta'\setminus \{x,y\}$. Consider the long exact sequence of reduced homologies, $$\cdots \to \Hred_{l}(\Delta_1 \cap \Delta_2; \mathbb{K}) \to \Hred_{l}(\Delta_1; \mathbb{K}) \oplus \Hred_{l}(\Delta_2; \mathbb{K}) \to \Hred_{l}(\Delta'; \mathbb{K}) \to \Hred_{l-1}(\Delta_1 \cap \Delta_2; \mathbb{K}) \to \cdots .$$ 
		If $\Hred_{l}(\Delta_1; \mathbb{K}) \neq 0$, then by
		(\ref{HochstersFormula}),  $\beta_{a+b-1,t_{a+b}(G)-1}^R(R/I(G))
		\neq 0$. Consequently, $t_{a+b}(G) \leq t_{a+b-1}(G)+1$. Now, by Proposition \ref{indsub}, we have 
		\[
		t_{a+b}(G) \leq
		t_{a}(G)+t_{b-1}(G)+1=t_{a}(G)+2(b-1)+1=t_{a}(G)+t_{b}(G).
		\]
		If $\Hred_{l}(\Delta_1; \mathbb{K}) = 0$, then $\Hred_{l}(\Delta_2;
		\mathbb{K}) \neq 0$ or $\Hred_{l-1}(\Delta_1\cap \Delta_2;
		\mathbb{K}) \neq 0$. Assume that $\Hred_{l}(\Delta_2; \mathbb{K})
		\neq 0$. Then, by (\ref{HochstersFormula}),
		$\beta_{a+b-2,t_{a+b}(G)-2}^R(R/I(G)) \neq 0$ which further
		implies that $t_{a+b}(G) \leq t_{a+b-2}(G)+2$. Again, by Proposition \ref{indsub}, we get 
		\[
		t_{a+b}(G) \leq
		t_{a}(G)+t_{b-2}(G)+2=t_{a}(G)+2(b-2)+2<t_{a}(G)+t_{b}(G).
		\]
		Finally, if $\Hred_{l-1}(\Delta_1\cap \Delta_2; \mathbb{K}) \neq 0$,
		then  by (\ref{HochstersFormula}),
		$\beta_{a+b-2,t_{a+b}(G)-3}^R(R/I(G)) \neq 0$. Thus, $t_{a+b}(G)
		\leq t_{a+b-2}(G)+3$. Thus, by Proposition \ref{indsub},  
		\[
		t_{a+b}(G) \leq
		t_{a}(G)+t_{b-2}(G)+3=t_{a}(G)+2(b-2)+3<t_{a}(G)+t_{b}(G).
		\]
		Hence, the assertion follows.
	\end{proof}
	
	\begin{remark}
		The above two propositions together prove that if one of the indices
		is bounded above by $\nu(G) + 1$, then $t_{a+b}(G) \leq t_a(G) +
		t_b(G)$. Comparing with \cite[Theorem 6.2]{ACI} in the case of Koszul
		monomial ideals, we can see that while they have a possibly bigger
		upper bound on $a$ and $b$, our hypothesis puts restriction only
		on one of $a$ and $b$. While we may not be able to directly achieve
		subadditivity from their result, in our case, we obtain subadditivity.
	\end{remark}
	We have immediate consequence of Propositions \ref{indsub} and
	\ref{nug+1}:
	\begin{corollary}
		If $G$ is a graph such that $\pdim(R/I(G)) \leq 2\nu(G) + 2$, then
		subadditivity holds for $G$.
	\end{corollary}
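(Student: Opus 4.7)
The plan is to reduce directly to Propositions \ref{indsub} and \ref{nug+1}, using the symmetry of the subadditivity inequality $t_{a+b}(G) \leq t_a(G) + t_b(G)$ in the two indices.

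Fix $a, b \geq 1$ with $a + b \leq \pdim(R/I(G))$. Without loss of generality we may assume $a \leq b$. The key observation is that the hypothesis $\pdim(R/I(G)) \leq 2\nu(G) + 2$ forces the smaller index to be small: since $2a \leq a + b \leq \pdim(R/I(G)) \leq 2\nu(G) + 2$, we obtain $a \leq \nu(G) + 1$.

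Now we split on two cases. If $a \leq \nu(G)$, then applying Proposition \ref{indsub} (with the roles of the two indices swapped, which is legitimate since the statement is symmetric in $a$ and $b$) yields $t_{a+b}(G) \leq t_a(G) + t_b(G)$. If instead $a = \nu(G) + 1$, then Proposition \ref{nug+1} applied with $b$ replaced by $a$ (again by symmetry) gives the same conclusion. Since every case is covered, the subadditivity of $R/I(G)$ follows.

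There is essentially no obstacle here: the argument is a purely combinatorial repackaging of Propositions \ref{indsub} and \ref{nug+1}, exploiting that the bound $\pdim(R/I(G)) \leq 2\nu(G) + 2$ is exactly tight enough to guarantee that in any decomposition $a + b \leq \pdim(R/I(G))$ at least one summand lies in the range $\{1, \ldots, \nu(G)+1\}$ already covered by the two propositions.
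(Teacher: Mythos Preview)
Your proof is correct and is exactly the argument the paper has in mind: the corollary is stated there as an ``immediate consequence of Propositions \ref{indsub} and \ref{nug+1}'' with no further proof, and your write-up simply spells out this immediate consequence via the obvious pigeonhole observation $2\min(a,b)\leq a+b\leq 2\nu(G)+2$.
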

	It follows from the above corollary and \cite[Corollary
	7.6.30]{JTh} that the subadditivity holds for cycles.
	
	We now introduce a hereditary class of graphs along with some extra
	hypotheses.
	
	\begin{definition}\label{class}
		Let $\G$ be a hereditary class of finite simple graphs with the
		property that  
		$G \in \G$ if and only if
		\begin{enumerate}
			\item $G = C_n$ for some $n \geq 3$ \hspace*{0.5cm} or
			\item for $G \neq C_n$ for any $n \geq 3$, there exists $e =
			\{x, y\} \in E(G)$ such that $N_G(x) \subset N_G[y]$
			and $G\setminus e \in
			\mathcal{G}$.
		\end{enumerate}
	\end{definition}
	\begin{remark}\label{rmk}
		Let $G$ be a disconnected graph. It is easy to observe that $G \in \mathcal{G}$ if and only if each component of $G$ is in $\mathcal{G}$.
	\end{remark}
	It is immediate from the definition that $\G$ contains cycles and
	forests. We now show that $\G$ contains some important classes
	of graphs. A subset $U$ of $V(G)$ is said to be a \textit{clique} if
	$G[U]$ is a complete graph.  A vertex $v$ is said to be a
	\textit{simplicial vertex} if it belongs to exactly one maximal clique
	of $G$.
	\begin{lemma}\label{chordal-tech}
		Let $G$ be a chordal graph. Then  there exist an edge $e=\{x,y\} \in
		E(G)$ such that $N_G(x)\subset N_G[y]$ and
		$G\setminus e$ is a chordal graph.
	\end{lemma}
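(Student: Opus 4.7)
The plan is to invoke Dirac's classical theorem that every chordal graph has a simplicial vertex, and then to verify that removing an edge incident to such a vertex preserves chordality.

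Assuming $E(G)\neq\emptyset$ (otherwise the statement is vacuous), I would restrict attention to a non-trivial connected component, which is itself chordal. By Dirac's theorem this component contains a simplicial vertex $x$, and since the component has at least one edge, $N_G(x)\neq\emptyset$. I would pick any $y\in N_G(x)$ and set $e=\{x,y\}$. Because $N_G(x)$ is a clique in $G$, every vertex of $N_G(x)\setminus\{y\}$ is adjacent to $y$, and hence $N_G(x)\subset N_G[y]$, which is the first required property.

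The main step is to show that $G\setminus e$ remains chordal, and I would argue by contradiction: suppose $C$ is an induced cycle of length at least $4$ in $G\setminus e$. If $x\notin V(C)$, then the edge $e$ joins no two vertices of $V(C)$, so the induced subgraphs of $G$ and $G\setminus e$ on $V(C)$ coincide, forcing $C$ to be an induced cycle of $G$ as well, contradicting chordality. If $x\in V(C)$, then its two $C$-neighbors both lie in $N_{G\setminus e}(x)=N_G(x)\setminus\{y\}$, which is a subclique of $N_G(x)$; the edge joining these two neighbors is present in $G$ and is not the deleted edge $e$, so it survives in $G\setminus e$ and produces a chord of $C$, again a contradiction. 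The only potentially delicate point is ensuring that the simplicial vertex $x$ can be chosen to satisfy $N_G(x)\neq\emptyset$, which the component reduction takes care of; the remainder is a routine case analysis.
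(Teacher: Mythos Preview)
Your proof is correct and follows essentially the same strategy as the paper: invoke Dirac's theorem to obtain a simplicial vertex $x$, pick any neighbor $y$, and argue by contradiction that $G\setminus e$ is chordal. Your case analysis ($x\notin V(C)$ versus $x\in V(C)$) is slightly more explicit than the paper's version, which instead observes that $e$ must be a chord of $C$ in $G$ and declares this to contradict simpliciality, but the underlying idea is identical.
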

	\begin{proof}
		It follows from \cite{dirac61} that $G$ has a simplicial  vertex, say
		$x$. Let $y \in N_G(x)$. Then $e=\{x,y\} \in E(G)$ is an edge such
		that $N_G(x) \subset N_G[y]$. We claim that $G\setminus e$ is a
		chordal graph.  Suppose $G \setminus e$ is not a chordal graph.
		Then $G \setminus e$ contains an induced cycle $C$ of length at
		least $4$. Since $G$ is chordal, this implies that $e$ is a chord
		in $G$ connecting two vertices of $C$.  This contradicts the fact
		that $x$ is a simplicial vertex. Therefore, $G \setminus e$ is a
		chordal graph.
	\end{proof}    
	Let $G$ and $H$ be two graphs. The {\it clique-sum} of $G$ and $H$
	along a complete graph $K_m$ is a graph with the vertex set $V (G)
	\cup V (H)$ and edge set $E(G)  \cup  E(H)$ such that the induced
	subgraph on the vertex set $V(G) \cap V(H)$ is the complete graph
	$K_m$.
\begin{lemma}\label{cliquesum}
Let $G$ be a graph obtained  by clique sum of a cycle $C_m$, $m \geq 4$, 
and some chordal graphs. If $G \neq C_m$, then
there exists an edge $e=\{x,y\} \in E(G)$ such that
$N_G(x)\subset N_G[y]$ and 
$G\setminus e$ is a graph obtained by clique sum of $C_m$ and
some chordal graphs. 
\end{lemma}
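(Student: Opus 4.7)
The plan is to find an edge $e=\{x,y\}$ inside one of the chordal pieces of $G$ by mimicking the argument of Lemma \ref{chordal-tech} locally, and then verify that removing $e$ preserves the global clique-sum structure.

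First I would record that $G$ is built by iteratively clique-summing $C_m$ with chordal graphs $H_1,\ldots,H_k$ along cliques $K_1,\ldots,K_k$, and $k\geq 1$ since $G\neq C_m$. Such an iteration is tree-like: any two pieces overlap only inside their shared attaching clique, because clique-sum never introduces edges between the disjoint parts of the two summands. So one can select a \emph{leaf} chordal piece $H$, meaning a piece whose vertices meet the rest of $G$ only within its single attaching clique $K$. After discarding any redundant summand with $V(H_i)=K_i$ (which contributes nothing to the sum), I may assume $V(H)\setminus K\neq\emptyset$.

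Next, $H$ has a simplicial vertex in $V(H)\setminus K$. By Dirac's theorem \cite{dirac61}, $H$ has a simplicial vertex; if $H$ is not complete then it has two non-adjacent simplicial vertices, and since $K$ is a clique at least one of them lies outside $K$. If $H$ is itself complete, every vertex of $H$ is simplicial and $V(H)\setminus K\neq\emptyset$ by the previous step. Pick such a simplicial vertex $x$. Because $x$ lies in no other piece, $N_G(x)=N_H(x)$, which is a clique by simpliciality of $x$ in $H$. For any $y\in N_G(x)$ one then has $N_G(x)\subseteq N_H[y]\subseteq N_G[y]$, so $e=\{x,y\}$ satisfies the neighborhood condition.

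It remains to check that $G\setminus e$ is again a clique-sum of $C_m$ with chordal graphs. Since $x\notin K$, removing $e$ leaves the shared clique $K$ intact and the gluing of $H$ to the remaining pieces is undisturbed. Chordality of $H\setminus e$ follows by the argument already used in Lemma \ref{chordal-tech}: any induced cycle in $H\setminus e$ of length $\geq 4$ would force $e$ to be a chord in $H$, so two neighbors of $x$ on the cycle would be adjacent in $H$, contradicting that the cycle is induced in $H\setminus e$. Hence $G\setminus e$ has the required decomposition with $H$ replaced by the chordal graph $H\setminus e$.

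The main obstacle is the structural step: formalizing the ``tree-like'' nature of an iterated clique-sum and justifying that a chordal leaf can always be chosen. In particular, one has to reduce to a canonical decomposition to rule out the degenerate case where a chordal summand is entirely contained in its attaching clique, and to ensure that a non-$C_m$ leaf exists (which is automatic once $G\neq C_m$, since $C_m$ is at most one node of the tree).
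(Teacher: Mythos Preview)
Your proposal is correct and follows essentially the same approach as the paper: both arguments invoke Dirac's theorem to locate a simplicial vertex $x$ in one of the chordal summands that lies outside the attaching clique, take $e=\{x,y\}$ for any neighbor $y$, and then recycle the chordality argument from Lemma~\ref{chordal-tech}. Your version is in fact more explicit than the paper's about the tree-like structure of the iterated clique-sum and about why a simplicial vertex can be found outside the overlap region---the paper simply asserts that ``$G$ will contain a simplicial vertex'' and assumes $x\in V(G_1)\setminus V(C_m)$ without spelling out the leaf-selection step you carry out.
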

\begin{proof}
		Suppose $G \neq C_m$. Then $G$
		is a clique sum of $C_m$ and some chordal graphs, say $G_1,
		\ldots, G_t$. By a theorem of Dirac, \cite{dirac61}, each $G_i$ is
		either a clique or contains two simplicial vertices that are
		non-adjacent. In either case, $G$ will contain a simplicial
		vertex, say $x$. Without loss of generality, assume that $x \in
		V(G_1) \setminus V(C_m)$. Let $y \in
		N_{G_1}(x)$ and set $e=\{x,y\}$. Then $N_G(x) = N_{G_1}(x) \subset
		N_{G_1}[y] \subset N_G[y]$. As in the proof of Lemma
		\ref{chordal-tech}, one can see that $G_1 \setminus e$ is a
		chordal graph. Therefore, $G \setminus e$ is a clique sum of $C_m$ and
		the chordal graphs $G_1 \setminus e, G_2, \ldots, G_t$.
	\end{proof}
	
	We now show that $\G$ contains chordal graphs and their clique sum
	with a cycle.
	\begin{theorem}\label{chordal}
		We have the followings:
		\begin{enumerate}
			\item If $G$ is a chordal graph,  then $G \in \mathcal{G}$.
			\item If $G$ be a graph obtained by taking clique sum of a cycle,
			$C_m$, and some chordal graphs, then $G \in \mathcal{G}$.
		\end{enumerate}
	\end{theorem}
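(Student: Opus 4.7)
The plan is to establish both statements by induction on the number of edges $|E(G)|$, using Lemmas \ref{chordal-tech} and \ref{cliquesum} as the key inductive steps. In each part the base case places $G$ directly into $\G$ via clause (1) of Definition \ref{class}, while the inductive step strips off a single edge that satisfies the neighborhood containment condition and preserves the structural hypothesis on $G$.

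For part (1), I induct on $|E(G)|$ within the class of chordal graphs. The base case $|E(G)| = 0$ is handled by the observation, noted just after Definition \ref{class}, that $\G$ contains all forests, and in particular the edgeless graph. For the inductive step, suppose $|E(G)| \geq 1$. If $G$ is a cycle, then necessarily $G = C_3$ since $C_3$ is the only chordal cycle, and $G \in \G$ by clause (1). Otherwise $G \neq C_n$ for any $n \geq 3$, and Lemma \ref{chordal-tech} supplies an edge $e = \{x,y\}$ with $N_G(x) \subset N_G[y]$ such that $G \setminus e$ is chordal with strictly fewer edges. The inductive hypothesis gives $G \setminus e \in \G$, and clause (2) of Definition \ref{class} then yields $G \in \G$.

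For part (2), I again induct on $|E(G)|$, where $G$ is a clique sum of $C_m$ with some chordal graphs $G_1, \ldots, G_t$. The base case is $G = C_m$, which lies in $\G$ by clause (1). When $G \neq C_m$, Lemma \ref{cliquesum} produces an edge $e = \{x,y\}$ with $N_G(x) \subset N_G[y]$ such that $G \setminus e$ is again a clique sum of $C_m$ and chordal graphs. Since $|E(G \setminus e)| < |E(G)|$, the inductive hypothesis applies to give $G \setminus e \in \G$, whence $G \in \G$ by clause (2).

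There is no substantive obstacle here: the essential combinatorial content has already been packaged into Lemmas \ref{chordal-tech} and \ref{cliquesum}, and the present theorem reduces to assembling them into a two-step induction on the edge count. The only minor point to verify is that the recursion in part (1) terminates cleanly at the edgeless graph, which, as noted above, is a forest and hence lies in $\G$; after that, both parts are a direct formal check.
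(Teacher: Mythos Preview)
Your proof is correct and follows essentially the same induction on $|E(G)|$ via Lemmas \ref{chordal-tech} and \ref{cliquesum} as the paper. The one difference is that the paper, reflecting the hereditary requirement in Definition \ref{class}, also checks at each inductive step that every proper induced subgraph of $G$ lies in $\G$; you omit this, but it is easily supplied (induced subgraphs of chordal graphs are chordal and fall under the same induction, and in part (2) a proper induced subgraph is either chordal or again a clique sum of $C_m$ with chordal graphs, possibly together with a disjoint chordal piece handled via Remark \ref{rmk} and part (1)).
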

	\begin{proof}
		\par (1) Let $G$ be a chordal graph. We prove this
		by induction on $|E(G)|.$ If $|E(G)|=1$, then the assertion is
		true. Assume that $|E(G)|>1$ and for every chordal graph $H$ with
		$|E(H)| < |E(G)|$, $H \in \mathcal{G}$. Every induced subgraph of
		a chordal graph is chordal. If $H$ is a proper induced subgraph of
		$G$, then $|E(H)| <|E(G)|$ and $H$ is a chordal graph. Thus, by
		induction, $H \in \mathcal{G}$. By Lemma \ref{chordal-tech}, there
		exist an edge $e =\{x,y\}$ such that  $N_G(x)\subset N_G[y]$ and $G\setminus e$ is a chordal graph.
		Now, by induction, $G\setminus e \in \mathcal{G}$. Hence, $G \in
		\mathcal{G}.$

		\par(2)
		We prove this by induction on $|E(G)| \geq m$. If  $|E(G)|=m$, then
		$G$ is a cycle and hence, $G \in \mathcal{G}$. Assume that
		$|E(G)|>m$.  Let $H$ be a proper induced subgraph of $G$. 
		If $H$ is a  chordal graph, then by (1),
		$H \in \mathcal{G}$. Suppose $H$ is not a chordal
		graph. Then $H$ is a disjoint
		union of a chordal graph and a graph obtained from clique sum of
		$C_m$ and some chordal graphs. By Remark \ref{rmk} and
		the first part, it is enough to prove that if $H$ is a
		clique sum of a cycle and some chordal graphs, then $H\in
		\mathcal{G}$, and this follows by induction, as $H$ is a proper
		induced subgraph of $G$. As $G$ is not $C_m$, by Lemma
		\ref{cliquesum}, there exist an edge $e=\{x,y\} \in E(G)$ such
		that $N_G(x)\subset N_G[y]$ and $G\setminus e$ is a graph obtained by clique sum of $C_m$ and
		some chordal graphs. Since $|E(G\setminus e)|<|E(G)|$, by
		induction $G\setminus e \in \mathcal{G}$. Hence, $G \in
		\mathcal{G}$.
	\end{proof}
For the following result, we consider the ring $R$ to be
$\NN^n$-graded. We require only the graded version of this result in
this and the next section. We require the multigraded version in the
last section while studying multigraded Betti numbers. Set $\deg x_i =
\ee_i$, the standard basis vector with $1$ at the $i$-th place and
zero everywhere else.  For $u \in V(G)$, set $\ee_u = \ee_i$ if $u =
x_i$. 
	\begin{proposition}\label{splitting}
		Let $G$ be a graph on $V(G)$ and  $e=\{x,y\} \in E(G)$. If
		$N_G(x) \subset N_G[y]$, then the mapping
		cone applied to 
		$$	0 \rightarrow \frac{R}{I(G\setminus e) : xy} (-\ee_x-\ee_y)
		\xrightarrow{\cdot xy} \frac{R}{I(G\setminus e)}$$
		gives the minimal free resolution of ${R}/{I(G)}$.
		In particular, for all $i \geq 0$ and $\mathbf{a} \in \mathbb{N}^n$,
		\begin{align*}
			\beta_{i,\mathbf{a}}^R\left(\frac{R}{I(G)}\right)&=
			\beta_{i,\mathbf{a}}^R\left(\frac{R}{I(G\setminus
				e)}\right)+\beta_{i-1,\mathbf{a}}^R\left(\frac{R}{I(G\setminus
				e) : xy}(-\ee_x-\ee_y)\right) \\&
			= \beta_{i,\mathbf{a}}^R\left(\frac{R}{I(G\setminus
				e)}\right)+\beta_{i-1,\mathbf{a}-\ee_x-\ee_y}^R\left(\frac{R}{I(G\setminus
				e) : xy}\right).
		\end{align*} 
	\end{proposition}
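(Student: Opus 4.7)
The plan is to use the short exact sequence $0 \to R/(I(G\setminus e) : xy)(-\ee_x - \ee_y) \xrightarrow{\cdot xy} R/I(G \setminus e) \to R/I(G) \to 0$, which arises at once from the decomposition $I(G) = I(G \setminus e) + (xy)$, and then to show that the associated mapping cone resolution of $R/I(G)$ is already minimal. Since the minimality of the mapping cone is equivalent to the additive Betti identity displayed in the statement, the heart of the proof is to establish this identity for every $i$ and every squarefree multidegree $\aaa$.

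Using the hypothesis $N_G(x) \subseteq N_G[y]$, a direct computation of the colon yields
$$I(G \setminus e) : xy = \bigl(N_G(y) \setminus \{x\}\bigr) + I\bigl(G[V(G) \setminus N_G[y]]\bigr),$$
whose minimal generators involve neither $x$ nor $y$, and whose Stanley--Reisner complex restricted to any $W \subseteq V(G) \setminus \{x,y\}$ coincides with $\Delta_G[W \setminus N_G[y]]$. Hochster's formula then translates the desired multigraded Betti identity into the following identity in reduced simplicial homology, for each squarefree $W \subseteq V(G)$ with $\{x,y\} \subseteq W$ and each integer $n$:
$$\dim \Hred_{n}(\Delta_G[W]) = \dim \Hred_{n}(\Delta_{G \setminus e}[W]) + \dim \Hred_{n-1}(\Delta_G[W \setminus N_G[y]]).$$
The case $\{x,y\} \not\subseteq W$ is automatic, since then $\Delta_G[W] = \Delta_{G \setminus e}[W]$ and the shifted Betti term vanishes.

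The topological identity is proved by two parallel applications of Mayer--Vietoris at the vertex $y$. The hypothesis $N_G(x) \subseteq N_G[y]$ forces $x$ to have no neighbor in $W \setminus N_G[y]$, so $x$ is a cone point of $\Delta_G[(W \setminus N_G[y]) \cup \{x\}]$, making this complex contractible. Applied to $\Delta_{G \setminus e}[W]$, whose link of $y$ equals $\Delta_G[(W \setminus N_G[y]) \cup \{x\}]$, this collapses Mayer--Vietoris to an isomorphism $\Hred_n(\Delta_{G \setminus e}[W]) \cong \Hred_n(\Delta_G[W \setminus \{y\}])$. Applied to $\Delta_G[W]$, whose link of $y$ is $\Delta_G[W \setminus N_G[y]]$ and whose deletion of $y$ is $\Delta_G[W \setminus \{y\}]$, it forces the inclusion $\Delta_G[W \setminus N_G[y]] \hookrightarrow \Delta_G[W \setminus \{y\}]$ to factor through this same contractible cone, so each segment of the corresponding long exact sequence splits into a short exact sequence
$$0 \to \Hred_n(\Delta_G[W \setminus \{y\}]) \to \Hred_n(\Delta_G[W]) \to \Hred_{n-1}(\Delta_G[W \setminus N_G[y]]) \to 0.$$
Combining the two observations yields the dimensional identity, and hence the Betti identity and the minimality of the mapping cone.

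I expect the main obstacle to be the cone-factoring step, which is precisely where the hypothesis $N_G(x) \subseteq N_G[y]$ is used in an essential way: it is this geometric fact that turns the inclusion into a null-homotopic map and forces the vanishing of the Mayer--Vietoris connecting homomorphisms. Everything else reduces to routine bookkeeping with Hochster's formula and the explicit description of the colon ideal.
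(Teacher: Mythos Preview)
Your proof is correct but takes a genuinely different route from the paper. The paper's argument is essentially two lines: it first observes that the hypothesis $N_G(x)\subset N_G[y]$ forces $I(G\setminus e):xy = I(G\setminus e):y$ (since no minimal generator of $I(G\setminus e):y$ is divisible by $x$), and then invokes \cite[Theorem~2.4]{Leila18}, a general criterion guaranteeing that the mapping cone on such a colon sequence is minimal. Everything about Betti numbers then follows formally.

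You instead give a self-contained topological proof: you compute the colon ideal explicitly, translate the desired Betti identity via Hochster's formula into a statement about reduced homology of induced subcomplexes of $\Delta_G$, and verify that statement by two link/deletion (Mayer--Vietoris) sequences at the vertex $y$. The crucial step---that the inclusion $\Delta_G[W\setminus N_G[y]]\hookrightarrow \Delta_G[W\setminus\{y\}]$ factors through the cone $\Delta_G[(W\setminus N_G[y])\cup\{x\}]$, forcing the connecting maps to vanish---is exactly where the hypothesis $N_G(x)\subset N_G[y]$ enters, and your identification of this is correct. What the paper's approach buys is brevity and a clean reduction to an existing black box; what yours buys is independence from that reference and a transparent explanation, at the level of the independence complex, of \emph{why} the Betti numbers split additively. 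Both arguments ultimately hinge on the same combinatorial fact (that $x$ becomes isolated once $N_G[y]$ is removed), but yours makes the topological mechanism explicit.
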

	
	\begin{proof}
		We claim that $I(G\setminus e): xy  =I(G\setminus e): y$.
		Note that  $I(G \setminus e) : xy = (I(G \setminus e) : y) : x$.
		Since $N_G(x) \subset N_G[y]$, it follows from \cite[Lemma
		3.1]{DHS13}, that $x$ does not divide any of the minimal monomial
		generators of $I(G \setminus e) : y$. Hence $I(G \setminus e) : xy
		= I(G \setminus e) : y$.
		Consider, the short exact sequence \begin{eqnarray}\label{ses1}
			0 \rightarrow \frac{R}{I(G\setminus e) : xy} (-\ee_x-\ee_y)
			\xrightarrow{\cdot xy} \frac{R}{I(G\setminus e)} \rightarrow \frac{R}{I(G)}
			\rightarrow 0.
		\end{eqnarray} It follows from \cite[Theorem 2.4]{Leila18} that the minimal free resolution of $R/I(G)$ is obtained by mapping cone construction on $	0 \rightarrow [{R}/({I(G\setminus e) : xy})] (-\ee_x-\ee_y)
		\xrightarrow{\cdot xy} {R}/{I(G\setminus e)}$. Hence, the assertion follows.
\end{proof}

\begin{theorem}\label{subedge}
Subadditivity holds for every $G \in \mathcal{G}$. 
\end{theorem}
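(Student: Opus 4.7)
I will prove Theorem \ref{subedge} by strong induction on $|E(G)|$. If $|E(G)| = 0$ subadditivity is vacuous, and if $G = C_n$ for some $n \geq 3$ (the base case of Definition \ref{class}) then subadditivity for cycles is the remark immediately following Proposition \ref{nug+1}. So assume $|E(G)| \geq 1$ and $G \neq C_n$ for any $n$. By Definition \ref{class}(2) there is an edge $e = \{x, y\}$ with $N_G(x) \subset N_G[y]$ and $G' := G \setminus e \in \mathcal{G}$; set $H := G \setminus N_G[y]$. Since $\mathcal{G}$ is hereditary and $H$ is an induced subgraph of $G$, $H \in \mathcal{G}$; as $|E(G')|, |E(H)| < |E(G)|$, the inductive hypothesis gives subadditivity for $R/I(G')$ and $R/I(H)$.

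The main structural input is Proposition \ref{splitting}, which (since $N_G(x) \subset N_G[y]$) identifies the minimal free resolution of $R/I(G)$ with the mapping cone applied to $R/(I(G'):y)(-\ee_x-\ee_y) \xrightarrow{\cdot xy} R/I(G')$, yielding
$$t_i(G) = \max\bigl\{t_i(G'),\; t_{i-1}^R(R/(I(G'):y)) + 2\bigr\}\quad\text{for every } i \geq 1.$$
Set $J := I(G'):y$ and $A := N_{G'}(y)$. The hypothesis $N_G(x) \subset N_G[y]$ forces $N_{G'}(x) \subset A$, so $x$ is isolated in $G' \setminus N_{G'}[y]$; hence
$$J = (A) + I(H),$$
with the three variable sets $A$, $V(H)$, and $\{x, y\}$ pairwise disjoint. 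The resulting tensor-product minimal free resolution (Koszul on $A$ against the minimal resolution of $R_0/I(H)$ over $R_0 := \mathbb{K}[V(H)]$, tensored with the free ring $\mathbb{K}[x,y]$) gives the Künneth-type identity
$$t_i^R(R/J) = \max_{j + k = i}\bigl\{j + t_k(H)\bigr\}.$$
Moreover, Theorem \ref{tsub} applied to $(A)$ (whose subadditivity follows from Corollary \ref{csub}) and $I(H)$ (subadditivity by induction) yields subadditivity for $R/J$.

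To prove $t_{a+b}(G) \leq t_a(G) + t_b(G)$ for $a, b \geq 1$ with $a + b \leq \pdim(R/I(G))$, I distinguish three cases. If $\min(a, b) = 1$, Proposition \ref{indsub} applies since $1 \leq \nu(G)$. If the maximum in the formula for $t_{a+b}(G)$ is realized by $t_{a+b}(G')$, the inductive hypothesis on $G'$ combined with $t_i(G') \leq t_i(G)$ finishes. Otherwise $a, b \geq 2$ and $t_{a+b}(G) = t_{a+b-1}^R(R/J) + 2$; applying subadditivity of $R/J$ to the pair $(a-1, b)$,
$$t_{a+b}(G) \leq \bigl[t_{a-1}^R(R/J) + 2\bigr] + t_b^R(R/J) \leq t_a(G) + t_b^R(R/J),$$
the second inequality using Proposition \ref{splitting} again. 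It remains to show $t_b^R(R/J) \leq t_b(G)$. Writing $t_b^R(R/J) = j + t_k(H)$ for some $j + k = b$: the case $j = 0$ gives $t_b(H) \leq t_b(G)$, which follows from induced-subgraph monotonicity of Betti numbers, immediate from Hochster's formula (\ref{HochstersFormula}) since any $W \subset V(H)$ is also a subset of $V(G)$; the case $j \geq 1$ gives $j + t_k(H) = (j-1) + t_k(H) + 1 \leq t_{b-1}^R(R/J) + 2 \leq t_b(G)$ by Proposition \ref{splitting}.

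The main technical obstacle is producing the disjoint-variable splitting $J = (A) + I(H)$, which rests crucially on the hypothesis $N_G(x) \subset N_G[y]$ in a manner already exploited inside the proof of Proposition \ref{splitting}. Once this Künneth structure is in hand, the remainder is careful bookkeeping: a double application of Proposition \ref{splitting} together with one application of Theorem \ref{tsub}.
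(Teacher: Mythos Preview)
Your proof is correct and shares the same inductive skeleton as the paper's: induction on $|E(G)|$, the mapping-cone description of the minimal resolution of $R/I(G)$ via Proposition \ref{splitting}, the disjoint-variable decomposition $I(G\setminus e):xy = (N_{G\setminus e}(y)) + I(G \setminus N_G[y])$, and subadditivity of $R/J$ via Theorem \ref{tsub} and Corollary \ref{csub}. The only divergence is in the final step. The paper simply observes that $t_1^R(R/J) \leq 2 = \deg(xy)$ and invokes Theorem \ref{subad} as a black box to conclude that the mapping-cone resolution is subadditive, which by Proposition \ref{splitting} is the minimal resolution of $R/I(G)$. You instead bypass Theorem \ref{subad} and argue directly from the formula $t_i(G) = \max\{t_i(G'),\, t_{i-1}^R(R/J)+2\}$ by a three-way case split, bringing in Proposition \ref{indsub} for $\min(a,b)=1$, induced-subgraph monotonicity via Hochster's formula, and the explicit K\"unneth description of $t_i^R(R/J)$. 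Your route is self-contained at the mapping-cone step but noticeably heavier; the paper's route is shorter precisely because Theorem \ref{subad} (whose hypothesis $t_1^R(R/(I:f)) \leq d$ is tailor-made for this situation) absorbs all of that bookkeeping in one stroke.
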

\begin{proof}
We prove this by induction on the number of edges. Let $G \in
\mathcal{G}$. If $|E(G)|=1$, then the assertion clearly holds. Assume 
that $|E(G)|>1$ and the result is true for all graphs $H \in
\mathcal{G}$ with $|E(H)|<|E(G)|$. If $G$ is a cycle, then the
subadditivity holds for $G$, \cite[Example 1(b)]{BH17}. Assume that
$G$ is not a cycle graph. Then, by Definition \ref{class}(3),  $G$ has an edge
$e=\{x,y\}$ such that $N_G(x) \subset N_G[y]$. Consider
the following short exact sequence: 
\begin{equation}\label{ses}
	0 \rightarrow \frac{R}{I(G\setminus {e}): xy} ({-2})
	\xrightarrow{\cdot xy} \frac{R}{I(G\setminus {e})} \rightarrow \frac{R}{I(G)} \rightarrow 0.\end{equation}
Note that $G\setminus e$ is graph with $|E(G\setminus e)| <|E(G)|$ and
$G\setminus e \in \mathcal{G}$. Therefore by induction, the subadditivity
holds for $G\setminus e$. From the proof of Proposition
\ref{splitting}, we have $I(G\setminus e) :xy =I(G\setminus e):y =I(G\setminus
N_G[y]) + (N_{G\setminus e}(y))$, where the last equality
follows from
\cite[Lemma 3.1]{DHS13}. Now, $H=G\setminus N_G[y]$ is an induced
subgraph of $G$, therefore,  $H \in \mathcal{G}$. Since $|E(H)|<|E(G)|$, by induction, the
subadditivity holds for $H$. As, $(N_{G\setminus
	e}(y))$ is generated by a regular sequence, the subadditivity holds for $( N_{G\setminus e}(y))$, by Corollary \ref{csub}. Since tensor product of the minimal
free resolutions of $R/I(H)$ and $R/( N_{G\setminus e}(y))$
resolves $I(G\setminus e):xy$, the subadditivity holds for
$R/(I(G\setminus e):xy)$, by Theorem \ref{tsub}. Note that $t^R_1(R/(I(G\setminus e):xy)) \leq 2$.
Therefore, by Theorem \ref{subad}, the resolution obtained by the
mapping cone construction applied to the map $[R/(I(G\setminus
e):xy )](-2) \xrightarrow{\cdot xy} R/I(G\setminus e)$ satisfies
the subadditivity condition. Now, by Proposition \ref{splitting}, the
subadditivity holds for $G$.
\end{proof}

\begin{remark}{\em
In the proof of the above theorem, the properties that we used to
derive the subadditivity property of $R/I(G)$ are that the mapping
cone applied to $[R/(I(G\setminus e) : e](-2) \to R/I(G\setminus
e)$ gives the minimal free resolution of $R/I(G)$ and that
subadditivity holds for $R/(I(G \setminus e) : e)$ and $R/I(G
\setminus e)$. Hence, if $G$ has an edge $e$ satisfying these
properties, then $R/I(G)$ has subadditivity property. This way, one
can possibly get more classes of graphs whose edge ideals have
subadditivity property. For example, if we take $G$ to be the graph
$C_6$ along with an edge $\{x_1, x_4\}$ and $e = \{x_2,x_3\}$, then $G
\setminus e$ is a unicyclic graph and $I(G \setminus e) : e$ is a
tree. Hence both these ideals satisfy subadditivity. In this case, it
is easy to verify that the mapping cone gives a minimal free
resolution of $R/I(G)$. Hence $R/I(G)$ satisfies subadditivity. Note
that $G \notin \G$.
}
\end{remark}

We say that a vertex $v \in V(G)$ is a \textit{cut vertex} if $G
\setminus v$ has  more components than $G$.   A \textit{block} of a
graph is a  maximal nontrivial connected induced subgraph which has no
cut vertex.   If exactly one block of a graph $G$ is a cycle and each
other block is an edge, then we say that $G$ is a \textit{unicyclic
graph}.
	
	There is a containment of graph classes:
	\[ \{\text{unicyclic graphs} \} \subset \{ \text{semi-block
		graphs} \}
	\subset  \{\text{clique sum of a cycle and chordal
		graphs}\}.\]
	A unicyclic graph is a clique sum of a cycle and some trees.
	The notion of semi-block graph was introduced in \cite{AR}. We refer
	the readers to \cite[Section 3]{AR} for the definition of semi-block
	graphs. Now, as a consequence of Theorems \ref{chordal} and
	\ref{subedge}, we show that for some important classes of graphs, the
	subadditivity holds. This includes chordal graphs for which the
	subadditivity was proved by Bigdeli and Herzog in \cite{BH17}.
	\begin{corollary}\label{unicyc}
		If $G$ is a chordal graph, semi-block graph or a unicyclic graph, then
		the subadditivity holds for $G$.
	\end{corollary}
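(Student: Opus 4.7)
The plan is to show that each of the three listed graph classes is contained in the hereditary family $\mathcal{G}$ from Definition \ref{class}, and then invoke Theorem \ref{subedge} to deduce subadditivity. In this way the entire corollary reduces to a sequence of class-containment verifications, with the substantial algebra already packaged in Theorem \ref{subedge}.

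For chordal graphs, Theorem \ref{chordal}(1) directly gives $G \in \mathcal{G}$, so the chordal case is immediate. For unicyclic graphs, I would argue as follows: if $G$ is unicyclic with unique induced cycle $C_m$, then every block of $G$ other than $C_m$ is an edge, and grouping together, for each vertex of $C_m$, the sub-forest hanging off of it realizes $G$ as a clique sum of $C_m$ with several trees along single-vertex cliques $K_1$. Since every tree is chordal (it has no induced cycle at all), this puts $G$ under the hypothesis of Theorem \ref{chordal}(2), giving $G \in \mathcal{G}$.

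For semi-block graphs, I would simply invoke the inclusion $\{\text{semi-block graphs}\} \subset \{\text{clique sum of a cycle and chordal graphs}\}$ displayed in the paragraph preceding the corollary; this again slots $G$ into the hypothesis of Theorem \ref{chordal}(2) and hence into $\mathcal{G}$. With $G \in \mathcal{G}$ established in all three cases, Theorem \ref{subedge} finishes the argument.

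The main obstacle, though modest, is articulating the clique-sum decomposition in the unicyclic setting precisely enough to apply Theorem \ref{chordal}(2) --- in particular, recognizing that a clique sum along $K_1$ is exactly a vertex-glueing, so that a cycle with trees attached at various vertices is genuinely a clique sum in the sense used in the statement of Theorem \ref{chordal}(2). Once that identification is in hand, no further algebraic work is required: the corollary is a bookkeeping consequence of the two theorems above.
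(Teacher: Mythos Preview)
Your proposal is correct and follows essentially the same route as the paper: the corollary is stated there as an immediate consequence of Theorems \ref{chordal} and \ref{subedge}, using the displayed chain of inclusions and the remark that a unicyclic graph is a clique sum of a cycle with trees. Your only addition is spelling out that the clique sum in the unicyclic case is along $K_1$, which is a helpful elaboration but not a departure from the paper's argument.
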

	Let $H$ be a graph and $U \subset V(H)$. \textit{The cone of $H$
	along $U$,}
	denoted by $x*_U H,$ is the graph on the vertex set $V(H)\cup \{x\}$
	and edge set $E(H) \cup \{ \{x,u\} \;:\; u \in U\}$. If $U =
	V(H)$, then we simply write $x * H$.
	\begin{theorem}\label{vertexcover-cone}
		Let $H$ be a non-trivial  graph, and $U\subset V(H)$ be a vertex cover of
		$H$. Let $G=x*_U H$. If subadditivity
		holds for $H$, then it holds for $G$. 
	\end{theorem}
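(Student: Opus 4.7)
The plan is to exploit the fact that $U$ being a vertex cover of $H$ makes the $x$-partition of $I(G)$ behave cleanly. Set $I_1 = (xu : u \in U)$ and $I_2 = I(H)$, so that $I(G) = I_1 + I_2$ and the minimal generators of $I_1$ and $I_2$ are disjoint. The vertex cover hypothesis gives $I(H) \subseteq (U)$, and from this I would compute the intersection explicitly: for any edge $\{a, b\} \in E(H)$, one endpoint, say $a$, lies in $U$, so $\mathrm{lcm}(xa, ab) = xab \in x \cdot I(H)$; conversely any monomial in $I_1 \cap I_2$ is divisible by some $xu$ and some edge of $H$, hence lies in $x \cdot I(H)$. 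Thus $I_1 \cap I_2 = x \cdot I(H)$.

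Because $I_1 = x \cdot (U)$ has a $2$-linear resolution (the Koszul complex on $(U)$ shifted by $x$), the Francisco--H\`a--Van Tuyl splitting criterion applies, and $I(G) = I_1 + I_2$ is a Betti splitting, yielding
\[
\beta_{i,j}^R(R/I(G)) = \beta_{i,j-1}^R(R/(U)) + \beta_{i,j}^R(R/I(H)) + \beta_{i-1,j-1}^R(R/I(H))
\]
for $i \geq 1$. Extracting the top degree in each homological position,
\[
t_i(G) = \max\bigl\{\, i+1 \text{ if } i \leq |U|,\ t_i(H),\ t_{i-1}(H)+1 \,\bigr\}.
\]

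Given this closed form, subadditivity will follow from a three-way case analysis on which term realizes $t_{a+b}(G)$. If the Koszul term $a+b+1$ is active, then $a+b \leq |U|$ forces $a, b \leq |U|$, so $t_a(G) \geq a+1$ and $t_b(G) \geq b+1$, giving $a+b+1 \leq (a+1) + (b+1) \leq t_a(G) + t_b(G)$. The contribution $t_{a+b}(H)$ is bounded, via subadditivity of $H$, by $t_a(H) + t_b(H) \leq t_a(G) + t_b(G)$. Finally, $t_{a+b-1}(H) + 1 \leq t_a(H) + t_{b-1}(H) + 1 \leq t_a(G) + (t_{b-1}(H) + 1) \leq t_a(G) + t_b(G)$, again by subadditivity of $H$ and the formula for $t_i(G)$. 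The main obstacle is verifying the Betti splitting itself, which is precisely where the vertex cover hypothesis plays its essential role: without it the intersection $I_1 \cap I_2$ fails to have the clean form $x \cdot I(H)$, and the splitting formula collapses. Once the splitting is in place, the rest is bookkeeping.
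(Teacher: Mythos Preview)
Your proposal is correct and follows essentially the same route as the paper. Both arguments hinge on the identity $I_1 \cap I_2 = x\cdot I(H)$ (which the paper pulls from \cite[Corollary~4.3]{HV07} and you verify directly), then invoke the H\`a--Van~Tuyl/Francisco--H\`a--Van~Tuyl splitting to obtain the same Betti formula; your term $\beta_{i,j-1}^R(R/(U))$ is exactly the paper's $\beta_{i,j}^R(R/I(H'))$ for the star $H'=K_{1,|U|}$, and your case analysis on which term realizes $t_{a+b}(G)$ is a minor reorganization of the paper's chain of inequalities.
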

	\begin{proof}
		Note that $I(G) =I(H)+(xu : u \in U)$. It follows from \cite[Corollary
		4.3]{HV07} that $$I(H) \cap (xu : u \in U)= xI(H).$$ Let $H'$ be the
		subgraph of $G$ on the vertex set $\{x\} \cup U$ and edge set
		$\{\{x,u\} : u \in U\}$. It  follows from Theorem \cite[Theorem
		4.6]{HV07} that 
		\begin{eqnarray}\label{betti4}
			\beta_{i,j}^R\left(\frac{R}{I(G)}\right) & = & 
			\beta_{i,j}^R\left(\frac{R}{I(H)}\right)+
			\beta_{i-1,j}^R\left(\frac{R}{xI(H)}\right)+
			\beta_{i,j}^R\left(\frac{R}{I(H')}\right) \nonumber \\
			&= & \beta_{i,j}^R\left(\frac{R}{I(H)}\right)+
			\beta_{i-1,j-1}^R\left(\frac{R}{I(H)}\right)+
			\beta_{i,j}^R\left(\frac{R}{I(H')}\right).
		\end{eqnarray}
		Therefore, for  each $ i \geq 1$, $t_i(G)
		=\max\{t_i(H),t_{i-1}(H)+1,t_i(H')\}$. Note that $H'$ is
		a star graph. Thus, by \cite[Corollary 3]{BH17}, for all $a, b \geq 1$ with $a +b \leq
		\text{pd}(R/I(H'))$, $t_{a+b}(H') \leq t_a(H')+t_b(H')$.
		For all $a,b \geq 1$ with $a+b \leq \text{pd}(R/I(G))$, 
		\begin{align*}
			&t_{a}(G)+t_{b}(G)\\&=
			\max\{t_{a}(H),\; t_{a-1}(H)+1,t_a(H')\}+\max\{t_{b}(H),\; t_{b-1}(H)+1,t_b(H')\}\\&\geq
			\max \{t_{a}(H)+t_{b}(H),\; t_{a-1}(H)+t_{b}(H)+1, t_a(H')+t_b(H')\}\\&\geq
			\max \{t_{a+b}(H),t_{a+b-1}(H)+1,\; t_{a+b}(H')\}\\&=
			t_{a+b}(G).
		\end{align*}
		Hence, the subadditivity holds for $G$.
	\end{proof}
	
	As a consequence of Theorem \ref{vertexcover-cone}, we obtain the
	following result:
\begin{corollary}\label{main-cor}
Let $G$ be a graph on $V(G)$. Then, the subadditivity holds for
$G$ if
\begin{enumerate}
	\item $G = W_n = x * C_n$, the wheel graph on $n+1$ vertices;
	\item $G = J_{2,n}= x *_U C_{2n}$, Jahangir graph on
		$2n+1$ vertices, where $U$ is a vertex cover of
		$C_{2n}$ of size $n$;
	\item $G$ is a complete multipartite graph; 
	\item $G = F_{m,n} = (x_1 *_U (\cdots *_U (x_m *_U P_n)))$, where
		$U = V(P_n)$, is a fan graph. 
\end{enumerate}
\end{corollary}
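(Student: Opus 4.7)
The plan is to derive each of the four families by repeatedly applying Theorem \ref{vertexcover-cone}, whose hypothesis requires a base graph $H$ with subadditivity and a vertex cover $U$ of $H$ such that the graph in question arises as $x *_U H$. The starting point in every case is a graph for which subadditivity is already established (a cycle via Corollary \ref{unicyc}, or a path, which is a tree and hence chordal).

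For part (1), take $H = C_n$ and $U = V(C_n)$, which is trivially a vertex cover; subadditivity for $C_n$ follows from Corollary \ref{unicyc}, so Theorem \ref{vertexcover-cone} yields subadditivity for $W_n = x * C_n$. For part (2), the Jahangir graph $J_{2,n}$ is $x *_U C_{2n}$ by definition, where $U$ consists of $n$ alternating vertices of the even cycle $C_{2n}$, which form a vertex cover; subadditivity for $C_{2n}$ again follows from Corollary \ref{unicyc}, and Theorem \ref{vertexcover-cone} finishes this case.

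For part (3), I would induct on $|V(G)|$ with $G$ a complete multipartite graph having partitions $V_1,\ldots,V_k$ with $k\geq 2$ (otherwise $I(G)=0$ and there is nothing to prove). The base cases $|V(G)|\leq 2$ are trivial. For the inductive step, pick any vertex $x\in V_1$ and set $H = G\setminus x$. Then $H$ is again complete multipartite on partitions $V_1\setminus\{x\},V_2,\ldots,V_k$, so subadditivity holds for $H$ by induction. The key verification is that $N_G(x) = V_2\cup\cdots\cup V_k$ is a vertex cover of $H$: any edge of $H$ joins vertices in distinct parts, and since both endpoints cannot lie in $V_1\setminus\{x\}$, at least one endpoint lies in $V_2\cup\cdots\cup V_k$. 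Then $G = x *_U H$ with $U = N_G(x)$, so Theorem \ref{vertexcover-cone} gives the conclusion, provided $H$ is non-trivial (if $H$ is trivial, i.e.\ has no edges, then $G$ is a star, which is chordal and handled by Corollary \ref{unicyc}).

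For part (4), I induct on $m$. The base case $m=0$ gives the path $P_n$, a tree (hence chordal), so subadditivity holds by Corollary \ref{unicyc}. For the inductive step, write $F_{m,n} = x_1 *_U F_{m-1,n}$ with $U = V(P_n)$; by induction subadditivity holds for $F_{m-1,n}$. The main point to check is that $U = V(P_n)$ is a vertex cover of $F_{m-1,n}$: every edge of $F_{m-1,n}$ is either an edge of $P_n$ (trivially covered) or of the form $\{x_i,u\}$ with $u\in V(P_n)$, which is also covered; crucially, the construction introduces no edges between distinct $x_i$'s. Theorem \ref{vertexcover-cone} then gives subadditivity for $F_{m,n}$.

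The main obstacle, shared across parts (3) and (4), is verifying the vertex cover condition at each stage of the iteration, which forces one to confirm that the inductive construction never produces edges entirely inside the complement of $U$. In part (3) this requires being careful when the chosen partition $V_1$ becomes singleton or empty, and in part (4) it rests on the observation that the coning variables $x_1,\ldots,x_m$ are pairwise non-adjacent in $F_{m,n}$.
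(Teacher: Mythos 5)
Your proposal is correct and matches the paper's intended argument: the paper states this corollary as an immediate consequence of Theorem \ref{vertexcover-cone}, obtained by coning over a cycle (for $W_n$ and $J_{2,n}$) and by iterated coning starting from a path or a trivial part (for fans and complete multipartite graphs), exactly as you do, with the vertex-cover and non-triviality checks you spell out being the only points needing verification.
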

	
	Note that the classes of the graph listed above are not in $\G$. In
	similar manner, one can keep constructing several graphs $G \notin
	\G$ satisfying subadditivity.
	
	Taking a join of two graphs is an important operation in graph theory.
	For two graphs $G$ and $H$, $G*H$ is the graph on the vertex set
	$V(G*H) = V(G) \sqcup V(H)$ and with the edge set $E(G*H) = E(G) \sqcup
	E(H) \cup \{ \{x,y\} ~ : ~ x \in V(G) \text{ and } y \in V(H)\}$. It
	is natural to ask how the subadditivity of $G$ and $H$ gets translated
	to $G*H$. We answer this question below.
	\begin{theorem}
		Let $G$ and $H$ be graphs on $m$ and $n$ vertices, respectively. If subadditivity holds for $G$ and $H$, then so for $G*H$.
	\end{theorem}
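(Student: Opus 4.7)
The plan rests on decomposing the edge ideal of the join via an intersection of ideals. Write $m = |V(G)|$, $n = |V(H)|$, $V(G) = \{x_1, \ldots, x_m\}$, $V(H) = \{y_1, \ldots, y_n\}$, and $R = \kk[x_1, \ldots, x_m, y_1, \ldots, y_n]$. Set $\mathfrak{m}_G = (x_1, \ldots, x_m)$ and $\mathfrak{m}_H = (y_1, \ldots, y_n)$. A direct check on monomial generators gives
\[
I(G*H) = (I(G) + \mathfrak{m}_H) \cap (I(H) + \mathfrak{m}_G).
\]
Setting $A = R/(I(G) + \mathfrak{m}_H)$ and $B = R/(I(H) + \mathfrak{m}_G)$, this intersection produces the short exact sequence
\[
0 \to R/I(G*H) \to A \oplus B \to R/(\mathfrak{m}_G + \mathfrak{m}_H) \to 0.
\]

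The main preliminary step is to derive, for $1 \leq i \leq \pdim(R/I(G*H))$, the formula
\[
t_i(G*H) = \max\{t_i^R(A),\; t_i^R(B),\; i+1\}.
\]
I would obtain this from the long exact sequence of $\tor_\bullet^R(-,\kk)$ applied to the sequence above, using that the Koszul resolution of $\kk$ makes $\tor_j^R(\kk,\kk)$ concentrated in internal degree $j$ with total dimension $\binom{m+n}{j}$. In internal degrees $j \geq i+2$ the neighboring terms vanish, yielding $\beta_{i,j}^R(R/I(G*H)) = \beta_{i,j}^R(A) + \beta_{i,j}^R(B)$. In the boundary degree $j = i+1$, the connecting map $\delta \colon \tor_{i+1}^R(\kk,\kk)_{i+1} \to \tor_i^R(R/I(G*H), \kk)_{i+1}$ has image of dimension $\binom{m+n}{i+1} - \binom{m}{i+1} - \binom{n}{i+1}$, which is strictly positive for $1 \leq i \leq m+n-1$; this forces $t_i(G*H) \geq i+1$ and completes the formula.

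The next step is to establish subadditivity for both $A$ and $B$. The minimal generating sets for $I(G)$ and $\mathfrak{m}_H$ lie in disjoint sets of variables. By the hypothesis, subadditivity holds for $R/I(G)$ (the Betti numbers over $R$ coincide with those over $\kk[V(G)]$ by the flat base change $\kk[V(G)] \hookrightarrow R$), and by Corollary \ref{csub} subadditivity holds for the complete intersection $R/\mathfrak{m}_H$. Theorem \ref{tsub} then delivers subadditivity for $A$, and the symmetric argument handles $B$.

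Finally, fix $a, b \geq 1$ with $a+b \leq \pdim(R/I(G*H))$. Combining the formula from the first step with the subadditivity of $A$ and $B$ gives
\[
t_{a+b}(G*H) \leq \max\{t_a^R(A) + t_b^R(A),\; t_a^R(B) + t_b^R(B),\; a+b+1\}.
\]
Since the formula also yields $t_c(G*H) \geq \max\{t_c^R(A), t_c^R(B), c+1\}$ for $c \in \{a,b\}$, each of the three arguments of this maximum is at most $t_a(G*H) + t_b(G*H)$ (using $t_c^R(A), t_c^R(B) \leq t_c(G*H)$ on the first two, and $(a+1)+(b+1) > a+b+1$ on the third), so subadditivity for $G*H$ follows. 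The principal technical hurdle will be the careful handling of the boundary degree $j = i+1$ when extracting the formula, since the connecting homomorphism contributes nontrivially there; beyond that, the proof is a clean assembly of Theorem \ref{tsub} and Corollary \ref{csub}.
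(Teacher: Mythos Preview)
Your argument is correct and takes a genuinely different route from the paper.

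The paper quotes Mousivand's formula \cite[Corollary 3.4]{Amir12} expressing $\beta_{i,j}^R(R/I(G*H))$ as an explicit combination of $\beta_{*,*}^R(R/I(G))$ and $\beta_{*,*}^R(R/I(H))$, extracts from it the closed form
\[
t_i(G*H)=\max\bigl\{\,t_{i-k}(G)+k,\ t_{i-l}(H)+l\,\bigr\},
\]
and then verifies subadditivity by a direct manipulation of these maxima. Your approach instead recognises $I(G*H)=(I(G)+\mathfrak m_H)\cap(I(H)+\mathfrak m_G)$, runs the Mayer--Vietoris long exact sequence in $\tor$, and reduces the problem to subadditivity of $A=R/(I(G)+\mathfrak m_H)$ and $B=R/(I(H)+\mathfrak m_G)$, which comes for free from Theorem~\ref{tsub} and Corollary~\ref{csub}. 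This is more self-contained (no external Betti formula needed) and more structural; in fact your $t_i^R(A)$ and $t_i^R(B)$ coincide with the two max-expressions in the paper's formula, so the two proofs ultimately compute the same quantity. The paper's route has the advantage of giving an explicit description of $t_i(G*H)$ purely in terms of $t_\bullet(G)$ and $t_\bullet(H)$, while yours recycles earlier results and avoids citing Mousivand.

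One simplification you should make: the connecting-map computation you use to obtain $t_i(G*H)\ge i+1$ is correct (the map $\tor_{i+1}^R(A\oplus B,\kk)_{i+1}\to\tor_{i+1}^R(\kk,\kk)_{i+1}$ really is injective, its image being the span of pure $x$-wedges and pure $y$-wedges), but it is unnecessary. Since $I(G*H)$ is a nonzero ideal generated in degree~$2$, minimality of the resolution already forces $\beta_{i,j}^R(R/I(G*H))=0$ for $j\le i$ whenever $i\ge 1$, so $t_i(G*H)\ge i+1$ is automatic for $1\le i\le\pdim(R/I(G*H))$. With this observation the ``principal technical hurdle'' you flag disappears entirely, and only the easy inequality $\beta_{i,j}^R(R/I(G*H))=\beta_{i,j}^R(A)+\beta_{i,j}^R(B)$ for $j\ge i+2$ is needed from the long exact sequence.
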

	\begin{proof}
		It follows from \cite[Corollary 3.4]{Amir12} that for all $i ,j$, \begin{align}\label{join-betti}
			\beta_{i,j}^R\left(\frac{R}{I(G*H)}\right)= \sum_{k=0}^{j-2}\Bigg[ {{n}\choose {k}}\beta_{i-k,j-k}^R\left(\frac{R}{I(G)}\right) +{ m \choose k}\beta_{i-k,j-k}^R\left(\frac{R}{I(H)}\right)\Bigg]. 
		\end{align}
		First, we claim that for each $1 \leq i \leq n+m-1$, 
		\begin{align*}
			t_i(G*H)= \max \{\{t_{i-k}(G)+k : 0 \leq k \leq \min\{i,n\}\}\cup\{t_{i-l}(H)+l : 0 \leq l \leq \min\{i,m\}\}\}. 
		\end{align*}
		Set $p=\max \{\{t_{i-k}(G)+k : 0 \leq k \leq
		\min\{i,n\}\}\cup\{t_{i-l}(H)+l : 0 \leq l \leq \min\{i,m\}\}\}$.
		If $q >p$, then for each $0 \leq k \leq \min\{i,n\}$, $t_{i-k}(G)
		<q-k$, and for each $0 \leq l \leq \min\{i,m\}$, $t_{i-l}(H)
		<q-l$. It follows from \eqref{join-betti} that
		$\beta_{i,q}^R(R/I(G*H))=0$. Thus, $t_i(G*H) \leq p$. Now, to
		prove our claim, it is enough to prove that
		$\beta_{i,p}^R(R/I(G*H)) \neq 0$. Note that for some $0 \leq k
		\leq \min\{i,n\}$, $t_{i-k}(G)=p-k$ or 		for some $0 \leq l
		\leq \min\{i,m\}$, $t_{i-l}(H)=p-l$. Hence, using
		\eqref{join-betti}, we get $\beta_{i,p}^R(R/I(G*H)) \neq 0$.
		This proves the claim. 
		
		Now, consider for $a,b \geq 1$, 
		\begin{align*}
			& t_{a+b}(G*H)\\ & =\max \Big\{\{t_{a+b-k}(G)+k : 0 \leq k \leq \min\{a+b,n\}\}\\&\;\;\;\;\;\;\;\;\;\;\;\;\;\;\;\;\;\;\;\;\;\;\;\;\;\;\;\;\cup\{t_{a+b-l}(H)+l : 0 \leq l \leq \min\{a+b,m\}\}\Big\}\\ &
			=\max \Big\{\{t_{a+b-k-k'}(G)+k+k' : 0 \leq k+k' \leq \min\{a+b,n\}\}\\& \;\;\;\;\;\;\;\;\;\;\;\;\;\;\;\;\;\;\;\;\;\;\;
			\;\;\;\;\;\cup\{t_{a+b-l-l'}(H)+l+l' : 0 \leq l+l' \leq \min\{a+b,m\}\}\Big\} \\ &
			\leq \max \Big\{\{t_{a+b-k-k'}(G)+k+k' : 0 \leq k \leq \min\{a,n\},  0 \leq k' \leq \min\{b,n\}\}\\&\;\;\;\;\;\;\;\;\;\;\;
			\;\;\;\;\;\;\;\;\;\;\;\;\;\;\;\;\;\cup\{t_{a+b-l-l'}(H)+l+l' : 0 \leq l \leq \min\{a,m\}, 0 \leq l' \leq \min\{b,m\}\}\Big\} \\&
			\leq \max \Big\{\{t_{a-k}(G)+k +t_{b-k'}(G)+k' : 0 \leq k \leq \min\{a,n\},  0 \leq k' \leq \min\{b,n\}\}\\&\;\;\;\;\;\;\;\;\;\;\;\;
			\;\;\;\;\;\;\;\;\;\cup\{t_{a-l}(H)+l+t_{b-l'}(H)+l' : 0 \leq l \leq \min\{a,m\}, 0 \leq l' \leq \min\{b,m\}\}\Big\} \\&
			\leq \max \Big\{\{t_{a-k}(G)+k : 0 \leq k \leq \min\{a,n\}\}\cup\{t_{a-l}(H)+l : 0 \leq l \leq \min\{a,m\}\}\Big\}\\& 
			\;\;\;\;\;\;+\max \Big\{\{t_{b-k'}(G)+k' : 0 \leq k' \leq \min\{b,n\}\}\cup\{t_{b-l'}(H)+l' : 0 \leq l' \leq \min\{b,m\}\}\Big\}\\&
			= t_a(G*H)+t_b(G*H).
		\end{align*}	
		This completes the proof.
	\end{proof}
	
	\subsection{Path ideals of rooted tree} Path ideals are generalization
	of edge ideals. In this subsection, we prove that the subadditivity
	for path ideals of rooted trees. 
	
	A tree together with a fixed vertex is called a {\it rooted tree}, and
	the fixed vertex of that tree is called a {\it root}. In
	a tree, there exists a unique path between any two given
	vertices. Thus, we can see that there is a unique
	path between the root and any other vertex in a rooted tree. We can also view a rooted
	tree as a directed graph by
	assigning to each edge the direction that goes ``away'' from the root.
	From now onward, $\Gamma$ denotes a rooted tree with $x$ as root and
	$\Gamma$ is viewed as a directed
	rooted tree in the above sense.   An edge $\{u,v\}$ in a rooted tree
	whose direction is from $u$ to $v$ is denoted by $(u,v)$. Let $\Gamma$
	be a rooted tree on $\{x_1, \ldots, x_n\}$ with root $x=x_i$ for some
	$i$. Let $t \geq 1$.
	A {\it directed path} of length $(t-1)$ in $\Gamma$ is a sequence of
	distinct vertices $x_{i_1},\ldots,x_{i_t}$ such that
	$(x_{i_j},x_{i_{j+1}}) \in E(\Gamma)$ for each $j \in
	\{1,\ldots,t-1\}$. The {\it $t$-path ideal} of $\Gamma$ is denoted by
	$I_t(\Gamma)$ and is defined as $$I_t(\Gamma):= (x_{i_1}\cdots
	x_{i_t}\; : \: x_{i_1},\ldots,x_{i_t} \text{ is a path of length }
	(t-1) \text{ in } \Gamma).$$
	For a vertex $u$ of $\Gamma$, the {\it level} of $u$ is denoted by
	level$(u)$ and is length of the unique path from $x$ to $u$. The {\it
		height} of $\Gamma$, denoted by height$(\Gamma)$, is $\max_{v \in
		V(\Gamma)} \text{level}(v)$. A vertex $u$ is said to be {\it parent}
	of a vertex $v$ if $(u,v) \in E(\Gamma)$, and a vertex $w$ is said to
	be {\it child} of a vertex $v$ if $(v,w) \in E(\Gamma)$. A vertex $v$
	is said to be {\it descendant} of a vertex $u$ if there is a path from
	$u$ to $v$. An induced subtree of $\Gamma$ with root $u \in V(\Gamma)$
	is an induced subtree of $\Gamma$ on the vertex set $\{u\} \cup \{v\;
	: \; v \text{ is descendant of } u \}$. We conclude this section by
	discussing subadditivity of $t$-path ideals of rooted trees:
	\begin{theorem}\label{rooted-tree}
		Let $\Gamma$ be a rooted tree on $\{x_1, \ldots, x_n\}$. Then, for all
		$1 \leq t \leq \text{height}(\Gamma) +1$, subadditivity holds for
		$R/I_t(\Gamma)$.
	\end{theorem}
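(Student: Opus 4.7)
The plan is to induct on the number of vertices of $\Gamma$, with the base case $|V(\Gamma)|=1$ trivial. In the inductive step, if $t=1$ then $I_1(\Gamma)=(x_1,\ldots,x_n)$ is a homogeneous complete intersection and subadditivity follows from Corollary \ref{csub}. So assume $2 \leq t \leq \text{height}(\Gamma)+1$.

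I would choose a leaf $v$ of $\Gamma$ at a level of maximum depth. Because $v$ is a leaf and $\text{level}(v) = \text{height}(\Gamma) \geq t-1$, every $t$-path containing $v$ must end at $v$, and the unique such path is determined by the root-to-$v$ path; hence the only minimal generator of $I_t(\Gamma)$ involving $v$ is $m = x_{i_1}x_{i_2}\cdots x_{i_{t-1}}v$, where $x_{i_1},\ldots,x_{i_{t-1}}$ are the $t-1$ nearest ancestors of $v$. Thus $I_t(\Gamma) = I_t(\Gamma \setminus v) + (m)$, and since $v$ does not divide any generator of $I_t(\Gamma \setminus v)$, the colon simplifies to
$$I_t(\Gamma \setminus v) : m \;=\; I_t(\Gamma \setminus v):(x_{i_1}x_{i_2}\cdots x_{i_{t-1}}).$$
I would then apply Theorem \ref{subad} with $I = I_t(\Gamma \setminus v)$, $f = m$ and $d = t$. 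Three ingredients are required: subadditivity of $R/I_t(\Gamma \setminus v)$, which follows from the inductive hypothesis applied to the rooted tree $\Gamma \setminus v$; the bound $t_1^R(R/(I_t(\Gamma \setminus v):m)) \leq t$, which is immediate because every generator of $I_t(\Gamma \setminus v)$ has degree $t$ and a colon operation can only lower generator degrees; and finally subadditivity of $R/(I_t(\Gamma \setminus v):m)$.

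For the third ingredient, I expect the colon ideal $I_t(\Gamma \setminus v):(x_{i_1}\cdots x_{i_{t-1}})$ to decompose, in disjoint sets of variables, as the sum of (i) a complete intersection coming from length-$(t-1)$ paths of $\Gamma \setminus v$ that share all but one vertex with the ancestor chain $Q = x_{i_1}\to \cdots \to x_{i_{t-1}}$ (these contribute isolated variables after minimization), and (ii) $t$-path ideals of the smaller rooted subtrees obtained by pruning $Q$ and $v$ from $\Gamma$. Once this decomposition is in place, Theorem \ref{tsub}, Corollary \ref{csub}, and the inductive hypothesis combine to give subadditivity of $R/(I_t(\Gamma \setminus v):m)$. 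Furthermore, since $v$ is absent from $R/(I_t(\Gamma \setminus v):m)$, the mapping cone applied to $0 \to [R/(I_t(\Gamma \setminus v):m)](-t) \xrightarrow{\cdot m} R/I_t(\Gamma \setminus v)$ should produce the minimal free resolution of $R/I_t(\Gamma)$ in the spirit of Proposition \ref{splitting}, so subadditivity of the mapping cone coincides with subadditivity of $R/I_t(\Gamma)$.

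The principal obstacle will be making the above decomposition of $I_t(\Gamma \setminus v):(x_{i_1}\cdots x_{i_{t-1}})$ rigorous. This requires a careful bookkeeping of how length-$(t-1)$ paths in $\Gamma \setminus v$ intersect the ancestor chain $Q$, leveraging the elementary fact that the intersection of two directed paths in a tree is again a directed subpath, followed by a minimization argument identifying the true minimal generators and showing they partition into variable-disjoint complete-intersection and smaller-path-ideal components. A secondary point, verification of the minimality of the mapping cone, is comparatively straightforward because the variable $v$ does not appear in the colon ideal.
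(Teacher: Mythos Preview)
Your approach is essentially the paper's: induct on $|V(\Gamma)|$, choose a leaf $x_{i_t}$ at maximal level, and apply Theorem~\ref{subad} to the mapping cone for multiplication by the unique $t$-path monomial $x_{i_1}\cdots x_{i_t}$ ending there. One refinement you should anticipate is that the colon ideal decomposes in disjoint variables not into $t$-path ideals of subtrees but into $I_t(\Gamma\setminus\{x_{i_0},\ldots,x_{i_t}\})$, the parent variable $(x_{i_0})$, and path ideals $I_{t-j}(\Delta_j\setminus\{x_{i_0},\ldots,x_{i_t}\})$ of \emph{varying} lengths $t-j$ on the branch subtrees $\Delta_j$; the paper imports this decomposition and the minimality of the mapping cone directly from Bouchat--H\`a--O'Keefe \cite{BHO} (Lemma~2.8, Remark~2.9, Theorem~2.7), so you need not rederive them.
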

	\begin{proof}
		We prove this by induction on the number of vertices of $\Gamma$. Let
		$x_{i_t}$ be a vertex such that level$(x_{i_t})=$
		height$(\Gamma)$. Let $x_{i_1},\ldots, x_{i_t}$ be a path of
		length $(t-1)$ terminating at $x_{i_t}$. Consider the following
		short exact sequence
		\begin{align}\label{pathseq}
			0 \rightarrow \frac{R}{I_t(\Gamma \setminus \{x_{i_t}\}):x_{i_1}\cdots x_{i_t}} (-t) \xrightarrow{x_{i_1}\cdots x_{i_t}} \frac{R}{I_t(\Gamma \setminus \{x_{i_t}\})} \rightarrow \frac{R}{I_t(\Gamma)} \rightarrow 0.
		\end{align}
		By induction, the subadditivity holds for $R/I_t(\Gamma \setminus
		\{x_{i_t}\})$.  Let $x_{i_0}$ be the only parent of $x_{i_1}$, if
		it exists.	For $j \in \{0,\ldots,t\}$, let $\Gamma_j$ be the
		induced subtree of $\Gamma$ rooted at $x_{i_j}$, and for  $j \in
		\{0,\ldots,t-1\}$, let $\Delta_j=\Gamma[V(\Gamma_j) \setminus
		V(\Gamma_{j+1})]$. By \cite[Lemma 2.8]{BHO}, $$I_t(\Gamma
		\setminus \{x_{i_t}\}):x_{i_1}\cdots x_{i_t} = I_t(\Gamma
		\setminus \{x_{i_0},\ldots,x_{i_t}\}) + (x_{i_0}) +
		\sum_{j=0}^{t-1}I_{t-j}(\Delta_j \setminus
		\{x_{i_0},\ldots,x_{i_t}\}).$$ Observe that $t_1(R/(I_t(\Gamma
		\setminus \{x_{i_t}\}):x_{i_1}\cdots x_{i_t})) \leq t$.
		It follows from \cite[Remark 2.9]{BHO} that the minimal free
		resolution of $R/(I_t(\Gamma \setminus \{x_{i_t}\}):x_{i_1}\cdots
		x_{i_t})$ is obtained by the tensor product of the minimal free
		resolution of $R/I_t(\Gamma \setminus
		\{x_{i_0},\ldots,x_{i_t}\})$, $ R/(x_{i_0})$ and
		$R/I_{t-j}(\Delta_j \setminus \{x_{i_0},\ldots,x_{i_t}\})$ for $j
		\in \{0,\ldots,t-1\}$. Now, by induction, the subadditivity holds
		for $R/I_t(\Gamma \setminus \{x_{i_0},\ldots,x_{i_t}\})$ and for
		$j \in \{0,\ldots,t-1\}$, $R/I_{t-j}(\Delta_j \setminus
		\{x_{i_0},\ldots,x_{i_t}\})$. Thus, subadditivity holds for
		$R/(I_t(\Gamma \setminus \{x_{i_t}\}):x_{i_1}\cdots x_{i_t})$, by
		Theorem \ref{tsub}.
		It follows from \cite[Theorem 2.7]{BHO} that the mapping cone
		construction applied to \eqref{pathseq} gives a minimal free
		resolution of $R/I_t(\Gamma)$. Hence, by Theorem \ref{subad}, the
		subadditivity holds for $R/I_t(\Gamma)$.
	\end{proof}

\section{Strand connectivity of edge ideals}
In this section, we discuss the strand connectivity of edge
ideals. 
It is known that the tensor product of minimal free resolutions of two
homogeneous ideals on disjoint set of variables gives a minimal free
resolution of their sum. It is interesting to ask if this property
translates to strand connectivity. The following example shows that
this is not the case.
\begin{example}\label{discon-strand}
Let $I = I(C_5) = (x_1x_2, \ldots, x_5x_1) \subset R_1 =
\kk[x_1,\ldots, x_5]$ and $J = (y_1y_2y_3) \subset R_2 
\subset \kk[y_1,y_2,y_3]$. Then $I$ is strand connected,
\cite[Example 1(b)]{BH17} and $J$ being a complete intersection, it is
strand connected. It can be seen that for $I + J \subset
R = \kk[x_1,\ldots, x_5,y_1,y_2,y_3]$, $\beta_{1,3}^R(R/(I+J)) =
1, ~\beta_{2,4}^R(R/(I+J)) = 0$ and $\beta_{3,5}^R(R/(I+J)) = 1$. This
shows that the $2$-strand of $I+J$ is not connected.
\end{example}

However, we show that we can get strand connectivity if one of the
ideals is generated by linear forms.
	
\begin{lemma}\label{tstrand}
Let $I$ and $ J $ be homogeneous ideals of $R$ such that there exist
minimal generating sets for $I$ and $ J $ in disjoint sets of
variables. If $I$ is generated by $k$ linear forms and $ J $ is
strand connected, then  $ I+J $ is strand connected.
\end{lemma}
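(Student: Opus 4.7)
The plan is to exploit the Koszul structure of the resolution of $R/I$ together with the tensor product description of the Betti numbers of $R/(I+J)$ that is already used in the proof of Theorem~\ref{tsub}. Since $I$ is generated by $k$ linear forms lying in variables disjoint from a minimal generating set of $J$, those linear forms constitute a regular sequence, so the Koszul complex is the minimal free resolution of $R/I$; in particular $\beta_{p,q}^R(R/I) = \binom{k}{p}$ when $q = p$ and vanishes otherwise. Disjointness of variables further ensures that the tensor product of the minimal free resolutions of $R/I$ and $R/J$ is a minimal free resolution of $R/(I+J)$, and hence
$$\beta_{i,i+j}^R\!\left(\frac{R}{I+J}\right) = \sum_{p=0}^{\min(i,k)} \binom{k}{p}\,\beta_{i-p,(i-p)+j}^R\!\left(\frac{R}{J}\right).$$

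Reading this formula through the definition of the $j$-strand, an integer $i$ lies in the $j$-strand of $R/(I+J)$ precisely when $i-p$ lies in the $j$-strand of $R/J$ for some $p \in \{0,1,\dots,\min(i,k)\}$; equivalently, the $j$-strand of $R/(I+J)$ is the Minkowski-type sum of the $j$-strand of $R/J$ with the set $\{0,1,\ldots,k\}$. If the $j$-strand of $R/J$ is empty, then so is that of $R/(I+J)$, and the conclusion holds vacuously. Otherwise, the strand connectedness of $R/J$ gives the $j$-strand of $R/J$ as the interval $[q_j(R/J),p_j(R/J)]$, and the Minkowski sum becomes
$$\bigcup_{s=0}^{k}\bigl[s+q_j(R/J),\,s+p_j(R/J)\bigr] = \bigl[q_j(R/J),\,p_j(R/J)+k\bigr],$$
so the $j$-strand of $R/(I+J)$ is a single interval of integers, hence connected.

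The only step of substance is checking that the displayed union really equals the single interval on the right, which reduces to the elementary inequality $p_j(R/J) \geq q_j(R/J)$: consecutive translates $[s+q_j(R/J),s+p_j(R/J)]$ and $[s+1+q_j(R/J),s+1+p_j(R/J)]$ have overlapping or adjacent integer supports, so no integer in the overall range is skipped. I do not foresee any serious obstacle; the argument is a direct computation once the Koszul resolution of $R/I$ and the tensor-product minimality are in hand.
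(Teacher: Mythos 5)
Your proposal is correct and takes essentially the same route as the paper: both use that the $k$ linear forms give the Koszul (hence linear) resolution of $R/I$ and that disjointness of variables makes the tensor product resolution minimal, yielding $\beta_{i,i+j}^R(R/(I+J))=\sum_{p=0}^{\min\{i,k\}}\binom{k}{p}\beta_{i-p,i-p+j}^R(R/J)$, from which the $j$-strand of $I+J$ is read off as an interval. Your Minkowski-sum reading actually gives the correct strand $[q_j(J),\,p_j(J)+k]$, whereas the paper's stated interval $[q_j(J),\,p_j(J)+\min\{k,q_j(J)\}]$ has the wrong right endpoint when $k>q_j(J)$ — a harmless slip, since only connectivity is needed.
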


\begin{proof}
Since the minimal generating sets of $I$ and $J$ are in disjoint
variables,  the tensor product of the minimal free resolution of
$R/I$ and $R/J$ gives the minimal free resolution of $R/(I+J)$.
Since $I$ is generated by $k$ linear forms, for all $i,j$, we have
\begin{eqnarray}\label{bettiprod}
	\beta_{i,i+j}^R\left(\frac{R}{I+J}\right) = \sum_{0 \leq r\leq
		\min\{i,k\}}
	\beta_{r,r}^R\left(\frac{R}{I}\right) 
	\beta_{i-r,i-r+j}^R\left(\frac{R}{J}\right).
\end{eqnarray}
Thus, if $j$-strand of $I+J$ is nonempty, then we claim  that the
$j$-strand of $J$ is nonempty and   $j\text{-strand of } I+J =
[q_j(J),p_j(J)+\min\{k,q_j(J)\}].$ If $j$-strand of $J$ is empty,
then it follows from (\ref{bettiprod}) that $j$-strand of $I+J$ is
empty. Thus, $j$-strand of $J$ is nonempty.  Let $i \in
[q_j(J),p_j(J)+\min\{k,q_j(J)\}]$. If $i \leq
p_j(J)$, then 
$\beta_{i,i+j}^R({R}/{J})\neq 0$, and hence it follows from
(\ref{bettiprod}) that 
$\beta_{i,i+j}^R(R/(I+J)) \neq 0$. If $i>p_j(J)$, then for some $l
\leq \min\{k,q_j(J)\}$, $i-l = p_j(J)$. Therefore,
$\beta_{l,l}^R({R}/{I})\beta_{i-l,i-l+j}^R({R}/{J})\neq 0$, and
hence, $\beta_{i,i+j}^R(R/(I+J)) \neq 0$. Thus, for $i \in
[q_j(J),p_j(J)+\min\{k,q_j(J)\}]$, $\beta_{i,i+j}^R(R/(I+J)) \neq
0$. Now, let $i \notin [q_j(J),p_j(J)+\min\{k,q_j(J)\}]$. If
$i<q_j(J)$, then for each $0 \leq r \leq \min\{k,i\}$, $i-r <
q_j(J)$, and hence, $\beta_{i,i+j}^R(R/(I+J) ) = 0$. If $i >
p_j(J) + \min\{k, q_j(J)\}$, then for any $0 \leq r \leq \min\{i,
k\}$, $i - r > p_j(J)$. Hence so that $\beta_{i-r,i-r+j}^R(R/J) =
0$. Therefore $\beta_{i,i+j}^R(R/I+J) = 0$. This completes the proof.
\end{proof}

In the example above, one of the ideals is generated in degree $3$.
If both ideals are generated in degree $2$ and are strand connected,
then can one say that their sum is strand connected? This question has
a graph theoretic analogue too. If $G$ and $H$ are two disjoint graphs
whose edge ideals are strand connected, then is $I(G \sqcup H)$ strand
connected?

Now we begin the study of strand connectivity of edge ideals of
graphs.
If $j > \reg(R/I(G))$, then the $j$-strand is empty. Therefore, to
study the strand connectivity, we can restrict ourselves to $j \leq
\reg(R/I(G))$. It is known that $\nu(G) \leq \reg(R/I(G)),$
\cite{Kat2006}. We have earlier shown that the graphs in $\G$ satisfy
subadditivity.  
We now consider a subcollection of $\G$.  Let 
\[\G':= \{G \in \G ~ : ~ C_n \text{ is not an induced
	subgraph of G for }n \equiv 2~(\text{mod }3)\}.\]
We first show that graphs in this collection have minimal regularity.
\begin{proposition}\label{greg}
	Let $G \in \mathcal{G}'$. Then, $\reg(R/I(G)) = \nu(G)$.
\end{proposition}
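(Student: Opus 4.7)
The plan is to establish the nontrivial direction $\reg(R/I(G)) \leq \nu(G)$, since the reverse bound $\nu(G) \leq \reg(R/I(G))$ is Katzman's inequality \cite{Kat2006}. I proceed by induction on $|V(G)|$. When $|V(G)| \leq 2$ the equality is immediate, and when $G = C_n$ for some $n \geq 3$ the hypothesis $G \in \mathcal{G}'$ forces $n \not\equiv 2 \pmod{3}$, for which the classical computation (e.g.\ \cite{JTh}) gives $\reg(R/I(C_n)) = \lfloor n/3 \rfloor = \nu(C_n)$.

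For the inductive step, assume $G$ is not a cycle and $|V(G)| \geq 3$. By Definition \ref{class}, there is an edge $e = \{x,y\} \in E(G)$ with $N_G(x) \subseteq N_G[y]$. Applying the standard regularity argument to the short exact sequence
\[
0 \to \left[\frac{R}{I(G):y}\right](-1) \xrightarrow{\;\cdot y\;} \frac{R}{I(G)} \to \frac{R}{(I(G),y)} \to 0,
\]
together with the identifications $I(G):y = I(G \setminus N_G[y]) + (N_G(y))$ and $(I(G),y) = (I(G\setminus\{y\}),y)$, yields
\[
\reg\!\left(\frac{R}{I(G)}\right) \leq \max\Bigl\{\reg\!\left(\frac{R}{I(G\setminus\{y\})}\right),\; \reg\!\left(\frac{R}{I(G \setminus N_G[y])}\right) + 1\Bigr\}.
\]
Since $\mathcal{G}$ is hereditary and $G$ contains no $C_n$ with $n \equiv 2 \pmod{3}$ as an induced subgraph, both $G \setminus \{y\}$ and $G \setminus N_G[y]$ lie in $\mathcal{G}'$, so by induction $\reg(R/I(G\setminus\{y\})) \leq \nu(G\setminus\{y\})$ and $\reg(R/I(G \setminus N_G[y])) \leq \nu(G \setminus N_G[y])$.

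To finish, I need $\nu(G\setminus\{y\}) \leq \nu(G)$ (clear, as it is an induced subgraph) and the key combinatorial bound $\nu(G \setminus N_G[y]) + 1 \leq \nu(G)$. For the latter, I claim that for any induced matching $\mathcal{M}$ of $G \setminus N_G[y]$, the set $\mathcal{M} \cup \{e\}$ is an induced matching of $G$: every vertex appearing in $\mathcal{M}$ lies in $V(G) \setminus N_G[y]$, hence is non-adjacent to $y$; and since $N_G(x) \subseteq N_G[y]$, every vertex outside $N_G[y]$ is also non-adjacent to $x$. Substituting both bounds into the regularity inequality gives $\reg(R/I(G)) \leq \nu(G)$, completing the induction. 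The main obstacle is really just the cycle base case; once the formula for $\reg(R/I(C_n))$ in the classes $n \equiv 0,1 \pmod{3}$ is on the table, the rest is a clean induction powered by the hereditary property of $\mathcal{G}'$ and the standard regularity exact sequence.
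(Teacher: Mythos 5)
Your proof is correct and follows essentially the same route as the paper: induction on the number of vertices with the cycle case as the base, the same choice of edge $e=\{x,y\}$ with $N_G(x)\subseteq N_G[y]$, the bound $\reg(R/I(G))\leq\max\{\reg(R/I(G\setminus\{y\})),\ \reg(R/I(G\setminus N_G[y]))+1\}$, and the same combinatorial step $\nu(G\setminus N_G[y])+1\leq\nu(G)$ via adjoining $e$ to an induced matching. The only cosmetic difference is that you re-derive the regularity inequality from the short exact sequence for multiplication by $y$, whereas the paper simply cites it from the literature.
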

\begin{proof}
The inequality $\nu(G) \leq \reg(R/I(G))$ follows from \cite[Lemma
2.2]{Kat2006}.  We now prove that $\reg(R/I(G)) \leq \nu(G)$.
We proceed by induction on the number of vertices. Let $G \in
\mathcal{G}'$.  If $|V(G)|=2$, then the assertion is true.
Assume that $|V(G)|>2$ and the result is true for all graphs
$H \in \mathcal{G}'$ with $|V(H)|<|V(G)|$. If $G$ is a cycle,
then $|V(G)| \not\equiv 2~(\text{mod } 3)$, and thus, by
\cite[Theorem 7.6.28]{JTh}, $\reg(R/I(G)) = \nu(G)$. Suppose
that $G$ is not a cycle. Then, by Definition \ref{class}(2b),
$G$ has an edge $e=\{x,y\}$ such that $N_G(x) \subset N_G[y]$
and $G\setminus e \in \G$. Since $G\setminus\{y\}$
and $G\setminus N_G[y]$ are induced subgraphs of $G$, by
Definition \ref{class}(2a), $G\setminus\{y\}, \; G \setminus
N_G[y] \in \mathcal{G}'$. Thus, by induction,
$$\reg\left(\frac{R}{I(G\setminus \{y\})}\right) \leq
\nu(G\setminus \{y\})$$ and $$\reg \left(\frac{R}{I(G\setminus
	N_G[y])}\right)\leq \nu(G\setminus N_G[y]).$$ 
Since $G \setminus \{y\}$ is an induced subgraph of $G$, $\nu(G
\setminus \{y\} \leq \nu(G)$. If $\{e_1, \ldots, e_s\}$ is an induced
matching in $G \setminus N_G[y]$, then $\{e_1, \ldots, e_s, \{x,
y\}\}$ is an induced matching in $G$. Therefore, 
$\nu(G\setminus
N_G[y])+1 \leq \nu(G)$. It follows from \cite[Theorem
2.7]{STT15} that $$ \reg \left( \frac{R}{I(G)} \right)  \leq
\max \Big\{\reg\left(\frac{R}{I(G\setminus \{y\})}\right),\reg
\left(\frac{R}{I(G\setminus N_G[y])}\right)+1 \Big\} \leq
\nu(G).$$ Hence, the assertion follows.
	\end{proof}

	\begin{theorem}
		If $G \in \mathcal{G}'$, then  $I(G)$ is strand connected.
	\end{theorem}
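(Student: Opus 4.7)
The plan is to induct on the number of edges of $G$. The base case $|E(G)| \leq 1$ is immediate. For the inductive step, if $G$ is a cycle $C_n$, then membership in $\mathcal{G}'$ forces $n \not\equiv 2 \pmod 3$, and strand connectivity follows from \cite[Example 1(b)]{BH17}. Otherwise, Definition \ref{class} supplies an edge $e = \{x,y\} \in E(G)$ with $N_G(x) \subset N_G[y]$ and $G \setminus e \in \mathcal{G}$.

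Before applying the induction hypothesis to $G \setminus e$, I would first verify that $G \setminus e \in \mathcal{G}'$. Suppose for contradiction $G \setminus e$ contains an induced $C_m$ with $m \equiv 2 \pmod 3$ on a vertex set $V'$. Since $G$ itself contains no such induced cycle, we must have $\{x,y\} \subset V'$ with $e$ appearing as a chord of this $C_m$ inside $G[V']$. The two neighbors $u_1, u_2$ of $x$ on the cycle lie in $N_G(x) \setminus \{y\} \subset N_G(y)$, so $y$ is adjacent to both $u_1$ and $u_2$ in $G \setminus e$; but $y$ has only two neighbors on $C_m$, forcing $\{u_1,u_2\}$ to be the neighborhood of $y$ on $C_m$. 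Then $x$ and $y$ share both neighbors on $C_m$, which is impossible for $m \geq 5$. Hence $G \setminus e \in \mathcal{G}'$, and by induction $R/I(G \setminus e)$ is strand connected. Setting $H = G \setminus N_G[y]$, the identity $I(G \setminus e):xy = I(H) + (N_{G\setminus e}(y))$ recalled in the proof of Theorem \ref{subedge}, combined with $H \in \mathcal{G}'$ (hereditary), the induction hypothesis applied to $H$, and Lemma \ref{tstrand} applied to the disjoint-variable decomposition (with $(N_{G\setminus e}(y))$ generated by linear forms), gives that $R/(I(G \setminus e):xy)$ is strand connected.

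I would then combine these via Proposition \ref{splitting}, which yields
\[
\beta_{i,i+j}^R(R/I(G)) = \beta_{i,i+j}^R(R/I(G\setminus e)) + \beta_{i-1, (i-1)+(j-1)}^R(R/(I(G\setminus e):xy)).
\]
Thus the $j$-strand of $R/I(G)$ is the union of the $j$-strand of $R/I(G \setminus e)$ with the translate by $+1$ of the $(j-1)$-strand of $R/(I(G\setminus e):xy)$; by the two previous paragraphs, each is an interval.

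The hard part will be to show that this union is itself a single interval --- that the two constituent intervals overlap or are adjacent. To handle this, my plan is to use the explicit endpoint formula from the proof of Lemma \ref{tstrand} for the $(j-1)$-strand of $R/(I(H)+(N_{G\setminus e}(y)))$, namely $[q_{j-1}(I(H)),\, p_{j-1}(I(H)) + \min\{\deg_G(y)-1,\, q_{j-1}(I(H))\}]$, and to compare these endpoints with those of the $j$-strand of $R/I(G\setminus e)$ via Hochster's formula applied to the independence complex. The goal is to exhibit, for each $i$ in the prospective gap, a subset $W \subset V(G)$ with $|W| = i+j$ realizing a nonzero reduced homology class either of $\Delta_{G\setminus e}[W]$ (giving the first summand) or of the shifted complex arising from $H$ together with linear forms (giving the second summand), using the controlled regularity $\reg(R/I(G)) = \nu(G)$ supplied by Proposition \ref{greg} to pin down where the strands must terminate. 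Once the two intervals are shown to meet, the $j$-strand of $R/I(G)$ is connected, completing the induction.
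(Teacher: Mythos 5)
Your setup matches the paper's: the same induction on edges, the same verification that $G\setminus e\in\mathcal{G}'$ (your chord argument is a fine, slightly more detailed version of the paper's remark that $e$ cannot be a chord of an induced cycle of length at least $5$), the same use of Proposition \ref{splitting}, and the same treatment of the colon ideal via $I(G\setminus e):xy=I(G\setminus N_G[y])+(N_{G\setminus e}(y))$ and Lemma \ref{tstrand}. But the proof stops exactly where the actual content begins: you acknowledge that the $j$-strand of $R/I(G)$ is a union of two intervals and that ``the hard part'' is to show they overlap or are adjacent, and then you only describe a plan (Hochster's formula, exhibiting subsets $W$ with nonzero reduced homology in the prospective gap) without carrying it out. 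As written, this is a genuine gap, and the proposed route is the hard direction: it would require producing \emph{non}vanishing Betti numbers in the gap, which is not established and is not how the argument is meant to close.

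The missing idea is that no gap analysis is needed because both intervals share the same left endpoint. By \cite[Lemma 2.2]{Kat2006}, $\beta_{j,2j}\neq 0$ whenever $j\le \nu$, and since the ideal is quadratic, $\beta_{i,i+j}=0$ for $i<j$; combined with Proposition \ref{greg} (for $G\in\mathcal{G}'$ the regularity equals $\nu(G)$, so every nonempty strand has $j\le\nu$), every nonempty $j$-strand of every graph in $\mathcal{G}'$ begins at $i=j$. The same holds for the colon ideal: by the proof of Lemma \ref{tstrand}, $q_{j-1}(I(G\setminus e):xy)=q_{j-1}(I(G\setminus N_G[y]))=j-1$ when nonempty, so after the homological shift by one in Proposition \ref{splitting} its contribution also starts at $i=j$. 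Hence the union of the two intervals is automatically an interval. The paper packages this as: $q_j(I(G))=j$, and if $\beta_{i,i+j}(R/I(G))=0$ for some $i>j$, then both summands vanish at that spot; strand connectivity of $R/I(G\setminus e)$ and of $R/(I(G\setminus e):xy)$ (by induction and Lemma \ref{tstrand}), together with their strands starting at the minimal index, forces both summands to vanish at $i+1$ as well, so $\beta_{i+1,i+1+j}(R/I(G))=0$. Adding this vanishing-propagation step (or the equal-left-endpoint observation) is what your argument needs to be complete.
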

	\begin{proof}
		We prove this by induction on the number of edges. Let $G \in
		\mathcal{G}'$. If $|E(G)|=1$, then the assertion is true.  Assume
		that $|E(G)|>1$ and the result is true for all graphs $H \in
		\mathcal{G}'$ with $|E(H)|<|E(G)|$. If $G$ is a cycle, then by
		\cite[Example 1(b)]{BH17}, $j$-strand of $I(G)$ is connected.
		Assume that $G$ is not a cycle.  Then, by Definition
		\ref{class}(2b), $G$ has an edge $e=\{x,y\}$ such that $N_G(x)
		\subset N_G[y].$ Since $N_G(x) \subset N_G[y]$, the edge $e$ can not 
		be a chord of an induced cycle of length at least $5$. Therefore, $G
		\setminus e$ does not contain $C_n$ as an induced cycle for any $n
		\equiv 2~(\text{mod }3)$ so that 
		$G\setminus e \in \mathcal{G}'$. 
		Since by Proposition \ref{greg} $\reg(R/I(G)) = \nu(G)$ for all $G \in \G'$, $j$-strand is empty
		for all $j >  \nu(G)$.  Let $ 1 \leq j \leq \nu(G)$. 
		It follows from \cite[Lemma 2.2]{Kat2006} that $q_j(I(G))=j$. Suppose
		that for some $i>j$, $\beta_{i,i+j}^R(R/I(G))=0$. It is enough to show
		that $\beta_{i+1,i+1+j}^R(R/I(G)) =0$.
		By Proposition \ref{splitting}, we have 
		\begin{eqnarray}\label{betti-ind}
			\beta_{i,i+j}^R\left(\frac{R}{I(G)}\right)=
			\beta_{i,i+j}^R\left(\frac{R}{I(G\setminus e)}\right) +
			\beta_{i-1,i+j-2}^R\left(\frac{R}{(I(G\setminus e):e)}\right).
		\end{eqnarray}
		Since
		$\beta_{i,i+j}^R(R/I(G))=0$, we have
		$\beta_{i,i+j}^R(R/I(G\setminus e))=0$ and
		$\beta_{i-1,i-1+j-1}^R(R/(I(G\setminus e):e))=0$. By induction,
		$I(G\setminus e)$ is strand connected. Moreover, it follows from
		(\ref{betti-ind}) that $\nu(G \setminus e) \leq \nu(G)$. Hence, 
		$\beta_{i+1,i+1+j}^R(R/I(G\setminus e))=0$ as $i >j$. This implies
		that
		$\beta_{i+1,i+1+j}^R(R/I(G)) =\beta_{i,i+j-1}^R(R/(I(G\setminus
		e):e)).$  Since $G \setminus N_G[y]$ is an
		induced subgraph of $G$, $G \setminus N_G[y] \in \G'$. Thus, by
		induction, $I(G \setminus N_G[y])$ is strand connected.   Note
		that $I(G \setminus e) : e = I(G \setminus
		N_G[y]) + (N_{G\setminus e}[y])$. Therefore, by Lemma \ref{tstrand},
		$I(G\setminus e):e$ is strand connected. Since $i-1 > j-1$ and
		$\beta_{i-1,i-1+j-1}^R(R/(I(G\setminus e):e))=0$, we have
		$\beta_{i,i+j-1}^R(R/(I(G\setminus e):e))=0$. Hence from
		(\ref{betti-ind}), we get $\beta_{i+1,i+1+j}^R(R/I(G)) =0$. 
		This implies that the $j$-strand of $I(G)$ is connected. Hence, $I(G)$
		is strand connected.
	\end{proof}
The above result gives several important classes of graphs whose edge
ideals are strand connected. Before we go on  listing them, we prove
two more results in this direction which enable us to construct more
classes of graphs whose edge ideals are strand connected.

\begin{theorem}\label{strandvertex}
Let $H$ be a non-trivial graph on $n$ vertices, $U$ be a vertex cover of
$H$ and $x$ be a new vertex. Let $G = x *_U H$. 
If $j$-strand of $I(H)$ is connected, then $j$-strand of $I(G)$ is
connected. In particular, if $I(H)$ is strand connected, then
$I(G)$ is strand connected. 
\end{theorem}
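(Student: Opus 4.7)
The plan is to read off the $j$-strand of $I(G)$ directly from the Betti decomposition (\ref{betti4}) of Theorem \ref{vertexcover-cone}. Reindexing by total degree, that formula asserts, for all $i \geq 1$ and $j \geq 1$,
\begin{equation*}
\beta_{i,i+j}^R\left(\frac{R}{I(G)}\right) = \beta_{i,i+j}^R\left(\frac{R}{I(H)}\right) + \beta_{i-1,(i-1)+j}^R\left(\frac{R}{I(H)}\right) + \beta_{i,i+j}^R\left(\frac{R}{I(H')}\right).
\end{equation*}
Writing $A$ for the $j$-strand of $I(H)$ and $C$ for the $j$-strand of $I(H')$, this identifies the $j$-strand of $I(G)$ with $A \cup (A+1) \cup C$, where $A+1$ denotes the shift $\{i+1 : i \in A\}$.

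The graph $H'$ is the star $K_{1,|U|}$ on vertex set $\{x\}\cup U$, whose independence complex is the disjoint union of the isolated vertex $\{x\}$ with the full simplex on $U$. A routine application of Hochster's formula to this complex and its induced subcomplexes yields $\beta_{i,i+1}^R(R/I(H')) = \binom{|U|}{i}$ for $i \geq 1$ and $\beta_{i,i+j}^R(R/I(H')) = 0$ for all $i \geq 1$ and $j \geq 2$. Hence $C = [1,|U|]$ when $j=1$ and $C = \emptyset$ when $j \geq 2$.

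I would finish with a brief case split. For $j \geq 2$ one has $C = \emptyset$, so the $j$-strand of $I(G)$ equals $A \cup (A+1)$; this is empty when $A$ is, and otherwise the hypothesis gives $A = [q_j(I(H)), p_j(I(H))]$, so the union is the interval $[q_j(I(H)), p_j(I(H))+1]$. For $j=1$, the non-triviality of $H$ forces $\beta_{1,2}^R(R/I(H)) \neq 0$, so $q_1(I(H)) = 1$; hence $A \cup (A+1) = [1,\, p_1(I(H))+1]$ and $C = [1,|U|]$ both contain the index $1$, and their union is the single interval $[1,\, \max\{p_1(I(H))+1,\, |U|\}]$. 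The ``in particular'' conclusion then follows by applying the above to every non-empty $j$-strand. The only delicate point is the $j=1$ case: one must use that $A\cup(A+1)$ and $C$ share the index $1$ to see their union remains a single interval; all else is bookkeeping given (\ref{betti4}) and the Betti table of the star $K_{1,|U|}$.
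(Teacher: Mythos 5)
Your proposal is correct and follows essentially the same route as the paper: it rests on the decomposition (\ref{betti4}) from Theorem \ref{vertexcover-cone} together with the vanishing of the higher strands of the star $H'=K_{1,|U|}$, from which the $j$-strand of $I(G)$ is read off as the union of the $j$-strand of $I(H)$, its shift by one, and the strand of $H'$ — exactly the content of the paper's argument, phrased there as propagation of vanishing from $\beta_{i,i+j}$ to $\beta_{i+1,i+1+j}$. The only cosmetic difference is that you treat $j=1$ explicitly via $q_1(I(H))=1$ and the interval $[1,|U|]$, whereas the paper restricts to $j>1$ and relies on the standard fact that the linear strand is always connected.
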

\begin{proof}
Let $j>1$ be such that $j$-strand of $I(H)$ is connected.  Let $H'$ be
the subgraph of $G$ on the vertex set $\{x\} \cup U$ and edge set
$\{\{x,u\} : u \in U\}$. 
		Since $H' = K_{1, |U|}$, by \cite[Theorem 5.2.4]{JTh},
		$\beta_{i,i+j}(R/I(H')) = 0$ if $j > 1$. Therefore, we get from
		(\ref{betti4})
		\begin{eqnarray}\label{betag}
			\beta_{i,i+j}^R\left(\frac{R}{I(G)}\right) & = &
			\beta_{i,i+j}^R\left(\frac{R}{I(H)}\right)+
			\beta_{i-1,i-1+j}^R\left(\frac{R}{I(H)}\right).
		\end{eqnarray}
		This implies that for each $1 \leq j \leq \reg(R/I(G))$,
		$q_j(I(G))=q_j(I(H))$.
		Assume that for some $i > q_j(I(G)) = q_j(I(H))$,
		$\beta_{i,i+j}^R(R/I(G))=0$. It is enough to prove that $\beta_{i+1,i+1+j}^R(R/I(G))=0$. It follows from Equation (\ref{betag}) that
		$\beta_{i,i+j}^R(R/I(H))=0$. Since $j$-strand of $I(H)$ is connected, this
		implies that $\beta_{i+1,i+1+j}^R(R/I(H))=0$. Hence, from Equation
		(\ref{betag}), we get 
		$\beta_{i+1,i+1+j}^R(R/I(G)) =0.$ This shows that
		$j$-strand of $I(G)$ is connected.
	\end{proof}
	\begin{theorem}\label{strandjoin}
		Let $G$ and $H$ be graphs on $m$ and $n$ vertices  such that $I(G)$
		and $I(H)$ are strand connected. Then, $I(G*H)$ is strand
		connected.
	\end{theorem}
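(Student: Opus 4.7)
The plan is to specialize the Betti number formula \eqref{join-betti} to the $j$-strand, describe the $j$-strand of $R/I(G*H)$ explicitly as a union of two intervals, and then verify that this union is connected. Substituting $i+j$ for the outer degree parameter in \eqref{join-betti} yields
\[
\beta_{i, i+j}^R\!\left(\tfrac{R}{I(G*H)}\right) = \sum_{k=0}^{i+j-2}\binom{n}{k}\beta_{i-k,\, (i-k)+j}^R\!\left(\tfrac{R}{I(G)}\right) + \sum_{k=0}^{i+j-2}\binom{m}{k}\beta_{i-k,\, (i-k)+j}^R\!\left(\tfrac{R}{I(H)}\right).
\]
Since all summands are nonnegative, $\beta_{i, i+j}^R(R/I(G*H))$ is nonzero precisely when, for some $0 \leq k \leq \min\{i+j-2, n\}$, the position $i - k$ lies in the $j$-strand of $R/I(G)$, or for some $0 \leq k \leq \min\{i+j-2, m\}$, the position $i - k$ lies in the $j$-strand of $R/I(H)$.

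Using the hypothesis that the $j$-strands of $R/I(G)$ and $R/I(H)$ are intervals $[q_j(G), p_j(G)]$ and $[q_j(H), p_j(H)]$ when nonempty, I would let $k$ vary over its admissible range to show that the first (resp.\ second) family of summands contributes a nonzero term precisely when $i \in [q_j(G), p_j(G) + n]$ (resp.\ $i \in [q_j(H), p_j(H) + m]$). The only technical point is to confirm that the cap $k \leq i+j-2$ never binds: whenever $i-k$ lies in the $j$-strand of $R/I(G)$ with $j \geq 1$, we have $i - k \geq q_j(G) \geq 1$, whence $k \leq i - 1 \leq i + j - 2$. Thus, provided both strands are nonempty, the $j$-strand of $R/I(G*H)$ equals $[q_j(G), p_j(G) + n] \cup [q_j(H), p_j(H) + m]$; the remaining cases (one or both strands empty, and $j = 0$) are immediate.

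The core of the argument is to verify that this union is a single interval. Without loss of generality assume $q_j(G) \leq q_j(H)$; it suffices to show $q_j(H) \leq p_j(G) + n + 1$. Since $R$ is a free extension of $\kk[y_1, \ldots, y_n]$, flat base change together with the Auslander--Buchsbaum formula gives $\pdim_R(R/I(H)) = \pdim_{\kk[y_1,\ldots,y_n]}(\kk[y_1,\ldots,y_n]/I(H)) \leq n$, whence $q_j(H) \leq p_j(H) \leq n$. Combined with $p_j(G) \geq q_j(G) \geq 1$ for $j \geq 1$, we obtain $p_j(G) + n \geq n + 1 > q_j(H)$, so the two intervals overlap and their union is connected. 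The chief foreseeable obstacle is juggling the various bounds on $k$ simultaneously, but the uniform positivity $q_j \geq 1$ for $j \geq 1$ keeps the bookkeeping manageable.
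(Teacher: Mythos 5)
Your proof is correct, and while it starts from the same identity \eqref{join-betti} as the paper, it is organized quite differently. The paper first shows $q_j(I(G*H))=\min\{q_j(I(G)),q_j(I(H))\}$ and then proves connectivity by propagating vanishing: assuming $\beta_{i,i+j}^R(R/I(G*H))=0$ for some $i$ past $q_j$, it deduces (via a binomial-coefficient argument that forces $i>\max\{q_j(I(G)),q_j(I(H))\}$, and a shift $k\mapsto k+1$ of the summation index, with the boundary term $k'=-1$ handled by the strand connectivity of $I(G)$ and $I(H)$) that $\beta_{i+1,i+1+j}^R(R/I(G*H))=0$. You instead read off the strand in closed form: since each binomial coefficient $\binom{n}{k}$ (resp.\ $\binom{m}{k}$) vanishes for $k>n$ (resp.\ $k>m$) and the cap $k\le i+j-2$ never binds when $j\ge 1$ (because $q_j\ge 1$), the $j$-strand of $I(G*H)$ is exactly $[q_j(I(G)),p_j(I(G))+n]\cup[q_j(I(H)),p_j(I(H))+m]$, and the two intervals overlap because $p_j(I(H))\le \pdim_R(R/I(H))\le n$ (symmetrically $p_j(I(G))\le m$) while $p_j(I(G))\ge 1$. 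Your route buys an explicit description of $q_j$ and $p_j$ of the join (hence of its regularity-by-strand structure), and it sidesteps the somewhat delicate contradiction step in the paper's proof; the paper's propagation argument, on the other hand, needs no projective-dimension input and illustrates the vanishing-propagation technique reused elsewhere in the section. One small point worth making explicit in a final write-up is the symmetry justifying the ``without loss of generality'' step, since the two intervals are extended by different amounts ($n$ versus $m$); the swap $(G,m)\leftrightarrow(H,n)$ does make the argument symmetric, so this is cosmetic rather than a gap.
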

	
	\begin{proof}
		Since the linear strand of a homogeneous ideal is connected, we assume that $j>1$.
		It follows from \cite[Corollary 3.4]{Amir12} that for all $i ,j$, \begin{align}\label{join-strand}
			\beta_{i,i+j}^R\left(\frac{R}{I(G*H)}\right)= \sum_{k=0}^{i+j-2}\Bigg[ {{n}\choose {k}}\beta_{i-k,i-k+j}^R\left(\frac{R}{I(G)}\right) +{ m \choose k}\beta_{i-k,i-k+j}^R\left(\frac{R}{I(H)}\right)\Bigg]. 
		\end{align}
		First, we claim that for each $1 < j \leq \reg(R/I(G*H))$,
		$$q_j(I(G*H))=\min \{q_j(I(G)),q_j(I(H))\}.$$ If $i < \min
		\{q_j(I(G)),q_j(I(H))\}$, then by Equation \eqref{join-strand},
		$ \beta_{i,i+j}^R(R/I(G*H))=0,$
		as $i-k < \min \{q_j(I(G)),q_j(I(H))\}$, for each $ 0 \leq k \leq
		i+j-2$. If $i = \min \{q_j(I(G)),q_j(I(H))\}$, then either
		$\beta_{i,i+j}^R(R/I(G)) \neq 0$ or $\beta_{i,i+j}^R(R/I(H)) \neq
		0$. Hence it follows from Equation \eqref{join-strand} that 
		$\beta_{i,i+j}^R(R/I(G*H))\neq 0.$ 
		Thus, for each $1 < j \leq \reg(R/I(G*H))$, $q_j(I(G*H))=\min
		\{q_j(I(G)),q_j(I(H))\}.$ 
		
		Let $q_j(I(G*H)) < i$.
To prove that the $j$-strand of $I(G*H)$ is connected, it is enough to
prove that if
$\beta_{i,i+j}^R(R/I(G*H))=0$, then $\beta_{i+1,i+1+j}^R(R/I(G*H))=0$.
Suppose $\beta_{i,i+j}^R(R/I(G*H))=0$. We claim that $i > \max\{
q_j(I(G)), q_j(I(H))\}$. We may assume that $\max\{q_j(I(G)),
q_j(I(H)) \} = q_j(I(H))$. First, note that $i$ can not be equal 
either to $q_j(I(G))$ or to $q_j(I(H))$. Assume, if possible, that $i
< q_j(I(H))$.  Let $k > 0$  be such that $i - k = q_j(I(G))$.
Then we have $0 < k < i < n$ so that ${n \choose k}
\beta_{i-k, i-k+j}^R(R/I(G)) \neq 0$.  Therefore, it follows
from Equation \eqref{join-strand} that $\beta_{i,i+j}^R(R/I(G*H)) \neq
0$ which is a contradiction to our assumption. Hence $i > \max\{
	q_j(I(G)), q_j(I(H))\}$.

From Equation \eqref{join-strand}, for 
$0 \leq k \leq i+j-2$, we get 
\begin{eqnarray}\label{eqn5}
{n \choose k}\beta_{i-k,i-k+j}^R\left(\frac{R}{I(G)}\right) = 0 = 
{m \choose k}\beta_{i-k,i-k+j}^R\left(\frac{R}{I(H)}\right).
\end{eqnarray} 
We need to
prove that ${n \choose
k}\beta_{i+1-k,i+1-k+j}^R({R}/{I(G)}) = 0 = {m \choose
k}\beta_{i+1-k,i+1-k+j}^R({R}/{I(H)})$, for $0 \leq k \leq i+j-1$, i.e., we need
to prove that for $-1 \leq k' \leq i+j-2$, ${n \choose
k'+1}\beta_{i-k',i-k'+j}^R({R}/{I(G)}) = 0 = {m \choose
k'+1}\beta_{i-k',i-k'+j}^R({R}/{I(H)})$. For $0 \leq k'
\leq i+j-2$, this follows from Equation \eqref{eqn5}. 
And the case $k' = -1$ follows from the strand connectivity of
$I(G)$ and $I(H)$. This completes the proof.
\end{proof}

As a consequence of our results, we obtain several classes of graphs
whose edge ideals are strand connected.
	\begin{corollary}\label{strand-cor}
		Let $G$ be a graph and $I(G)$ be its edge ideals. Then $I(G)$ is
		strand connected if
		\begin{enumerate}
			\item $G$ is chordal graph; (\cite[Proposition 5]{BH17})
			\item $G = W_n = x * C_n$; 
			\item $G = J_{2,n}$, Jahangir graph on $2n+1$ vertices;
			\item $G = F_{m,n}$, the fan graph;
			\item $G$ is a unicyclic graph with the induced cycle of length $n
			\neq 3k+2$ for some $k \geq 1$.
		\end{enumerate}
	\end{corollary}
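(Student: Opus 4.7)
I would handle each of the five cases separately, using the machinery already developed in the section.

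For \textbf{(1)}, a chordal graph $G$ has no induced cycle of length $\geq 4$, so in particular no induced $C_n$ with $n \equiv 2 \pmod 3$. Combined with Theorem \ref{chordal}(1), this gives $G \in \mathcal{G}'$, and the strand connectivity then follows from the theorem showing that $I(G)$ is strand connected for every $G \in \mathcal{G}'$.

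For \textbf{(5)}, a unicyclic graph is a clique sum of its unique cycle $C_n$ with trees, hence belongs to $\mathcal{G}$ by Theorem \ref{chordal}(2). Since its only induced cycle is $C_n$ and the hypothesis $n \neq 3k+2$ means $n \not\equiv 2 \pmod 3$, such a graph contains no induced $C_m$ with $m \equiv 2 \pmod 3$. Therefore it lies in $\mathcal{G}'$, and strand connectivity again follows.

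For \textbf{(2)}, \textbf{(3)}, and \textbf{(4)}, the plan is to appeal to Theorem \ref{strandvertex} (possibly iteratively), starting from a known strand-connected base graph. In each case the base is either a cycle or a path, both of which are strand connected (cycles by \cite[Example 1(b)]{BH17}, paths by being chordal, i.e., case (1)). Specifically: for the wheel $W_n = x \ast C_n$, take $H = C_n$ and $U = V(C_n)$, which is trivially a vertex cover of $C_n$, so Theorem \ref{strandvertex} applies directly. For the Jahangir graph $J_{2,n} = x \ast_U C_{2n}$, the hypothesis in its definition already supplies a vertex cover $U$ of $C_{2n}$ of size $n$, and since $I(C_{2n})$ is strand connected, Theorem \ref{strandvertex} again applies.

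For the fan graph $F_{m,n}$, the plan is to induct on $m$. Set $H_0 = P_n$ and $H_i = x_{m-i+1} \ast_U H_{i-1}$ with $U = V(P_n)$; then $H_m = F_{m,n}$. At each step I must verify that $U = V(P_n)$ remains a vertex cover of $H_{i-1}$, which is immediate because every edge of $H_{i-1}$ either lies in $P_n$ or joins some $x_j$ to a vertex of $V(P_n)$. Since $I(P_n)$ is strand connected (case (1)), iterated application of Theorem \ref{strandvertex} gives strand connectivity of $I(F_{m,n})$. The only point requiring any care is this verification of the vertex-cover hypothesis at each stage of the iteration, but as just observed it is automatic from the construction.
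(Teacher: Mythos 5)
Your proposal is correct and follows essentially the same route the paper intends: cases (1) and (5) by verifying membership in $\mathcal{G}'$ and invoking the theorem that edge ideals of graphs in $\mathcal{G}'$ are strand connected, and cases (2)--(4) by (iterated) application of Theorem \ref{strandvertex} starting from the strand-connected base graphs $C_n$ and $P_n$. Your check that $V(P_n)$ remains a vertex cover at each stage of the fan-graph iteration is exactly the point that needs verifying, and it is handled correctly.
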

	
	Other than the named classes of graphs listed above, one can construct
	more graphs using Theorems \ref{strandvertex}, \ref{strandjoin}. It is
	known that not all edge ideals are strand connected. We expect that
	characterizing strand connected edge ideals will be a tough problem. 
	There are even possibly simpler questions in this direction for which
	the answers are unknown:
	
	\begin{question}
		\begin{enumerate}
			\item If $I(G)$ is strand connected, then is $I(H)$ strand
			connected for every non-trivial induced subgraph $H$ of
			$G$?
			
			\item If $\reg(R/I(G)) = \nu(G)$, then is $I(G)$ strand
			connected?
			
			\item If $j \leq \nu(G)$, is the $j$-strand of $I(G)$ connected?
		\end{enumerate}
	\end{question}
	
	\section{Multigraded Betti numbers of edge ideals of graphs}
	
In this section, we study multigraded Betti numbers of some 
classes of edge ideals. First we generalize the result of Boocher et
al. to the case of unicyclic graphs. Girth of a unicyclic graph $G$ is
the length of the induced cycle of $G$.
\begin{theorem}\label{multi-betti-uc}
Let $G$ be a unicyclic graph on $n$-vertices with girth $m$. 
\begin{enumerate}
\item If $m$ is not a multiple of $3$, then
$\beta_{i,\mathbf{a}}(R/I(G)) \in \{0,1\}$ for all $i \geq 1$
and  $\mathbf{a} \in \mathbb{N}^n$.
\item If $m = 3k$, then $\beta_{i,\mathbf{a}}(R/I(G)) \in \{0,1,2\}$
for all $i \geq 1$ and  $\mathbf{a} \in \mathbb{N}^n $.
Furthermore assume that every vertex in $G$ is at a distance
at most two from the unique cycle in $G$. Then
$\beta_{i,\mathbf{a}}(R/I(G)) =2$ if and only if $i=2k$, and
$\mathbf{a}=\sum_{x \in V(C_m)} \ee_x$.
\end{enumerate}
\end{theorem}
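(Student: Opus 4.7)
The plan is to prove both parts simultaneously by strong induction on $|V(G)|$. The base case is $|V(G)| = m$, i.e., $G = C_m$, where the bounds in parts (1) and (2) follow from the cycle theorem of Boocher et al.; the characterisation in part (2) is then verified using Hochster's formula and the homotopy type $\Delta_{C_{3k}} \simeq S^{k-1} \vee S^{k-1}$ (which gives $\Hred_{k-1}(\Delta_{C_{3k}}) \cong \kk^2$ and vanishing otherwise), combined with Bouchat's theorem applied to any proper induced subgraph $C_m[W]$ (which is a disjoint union of paths, hence a forest).

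For the inductive step $|V(G)| > m$, I would pick a pendant vertex $x$ of $G$ (which exists because $G$ has a vertex off $C_m$, and a vertex at maximum distance from $C_m$ along a tree branch must be pendant), let $y$ be its unique neighbour, and apply Proposition~\ref{splitting} to $e = \{x,y\}$:
\[
\beta_{i,\mathbf{a}}(R/I(G)) = \beta_{i,\mathbf{a}}(R/I(G\setminus e)) + \beta_{i-1,\mathbf{a}-\ee_x-\ee_y}(R/(I(G\setminus e):xy)).
\]
Since $I(G)$ is squarefree, it suffices to take $\mathbf{a}$ as the characteristic vector of some $W \subseteq V(G)$, and I would split on whether $x, y \in W$. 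If $x \notin W$, the second term vanishes while the first equals $\beta_{i,W}(R/I(G\setminus\{x\}))$, to which induction applies since $G\setminus\{x\}$ is unicyclic with girth $m$ on one fewer vertex. If $x \in W$ but $y \notin W$, both terms vanish---the first because $x$ is isolated in $G\setminus e$ and thus absent from every Betti multidegree of $R/I(G\setminus e)$. If $x, y \in W$, only the second term survives, and I would analyse it using $I(G\setminus e):xy = I(H) + (N_G(y)\setminus\{x\})$ with $H = G\setminus N_G[y]$ (from the proof of Proposition~\ref{splitting}); since the two summands are in disjoint variables, the minimal free resolution is the tensor product of that of $R/I(H)$ with the Koszul complex on $N_G(y)\setminus\{x\}$, so the contributing Betti number collapses to a single $\beta_{j, W \cap V(H)}(R/I(H))$. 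The induced subgraph $H$ is either a forest (so Bouchat bounds its multigraded Betti numbers by $1$) or contains $C_m$ on strictly fewer vertices than $G$ (so the inductive hypothesis applies, with a Künneth-type factorisation absorbing any tree components that split off from the unicyclic part).

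For the ``iff'' in part~(2), I would observe that the hypothesis ``every vertex of $G$ lies within distance $2$ of $C_m$'' is preserved when the pendant $x$ is removed. The key point is that in the case $x, y \in W$, the vertex $y$ must lie within distance $1$ of $C_m$ (being the neighbour of a vertex at distance $\leq 2$), so $N_G[y]$ meets $V(C_m)$ and $H$ is forced to be a forest; by Bouchat, this case contributes at most $1$. The case $x \in W$, $y \notin W$ contributes $0$. Hence $\beta_{i,\mathbf{a}}(R/I(G)) = 2$ can only come from the case $x \notin W$, and the inductive hypothesis then yields $(i, \mathbf{a}) = \bigl(2k, \sum_{c \in V(C_m)} \ee_c\bigr)$; the converse follows by iterating this case down to the base $G = C_m$. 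The main technical obstacle is the third case: one must identify $R/(I(H) + (N_G(y)\setminus\{x\}))$ correctly as a tensor product and verify that its relevant multigraded Betti number collapses to a \emph{single} term $\beta_{\cdot,\cdot}(R/I(H))$, using that the Koszul factor has nonzero Betti number in at most one homological degree per squarefree multidegree.
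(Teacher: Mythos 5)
Your proposal is correct and follows essentially the same route as the paper: induction by deleting a pendant vertex $x$, applying Proposition~\ref{splitting} to $e=\{x,y\}$, noting that for each squarefree multidegree exactly one of the two terms can survive, decomposing $I(G\setminus e):xy$ as $I(G\setminus N_G[y])$ plus variables in disjoint variables so the Koszul factor collapses the tensor product to a single Betti number, invoking Bouchat for forests, and observing that the distance-$\leq 2$ hypothesis forces $G\setminus N_G[y]$ to be a forest so that the ``$=2$'' case can only propagate through the $\mathbf{a}_x=0$ branch down to the base cycle. The only cosmetic difference is that you re-derive the base case $G=C_{3k}$ via Hochster's formula and the homotopy type of the independence complex of the cycle, where the paper simply cites Boocher et al.; also note that with the paper's block-theoretic definition of unicyclic graphs (which allows tree components), the inductive hypothesis applies directly to $G\setminus N_G[y]$, so your auxiliary K\"unneth step is not needed.
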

\begin{proof}
		We prove this by induction on $n-m$. If $n-m=0$, then 
		the assertion follows from \cite[Proposition 3.5]{BDGMS19}.
		Now, assume that $n-m>0$. Then, there exists $x
		\in V(G)$  such that $\deg_G(x) =1$. Let $N_G(x) = \{y\}$. 
		Then, it follows from Proposition \ref{splitting} that for $i \geq 1$ and
		$\mathbf{a} \in \mathbb{N}^n$, 
		\begin{equation}\label{betti-eq}
			\beta_{i,\mathbf{a}}\left(\frac{R}{I(G)}\right)=\beta_{i,\mathbf{a}}\left(\frac{R}{I(G\setminus
				\{x\})}\right)+\beta_{i-1,\mathbf{a}-\ee_x-\ee_y}\left(\frac{R}{I(G\setminus
				\{x\}):xy}\right).\end{equation} 
		If
		$\mathbf{a}_x\neq 0$, then for any $j \geq 1$, $\beta_{j,\mathbf{a}}(R/I(G\setminus \{x\}))
		=0$ as $x$ does not divide any of the minimal monomial generators of
		$I(G\setminus \{x\})$. If $\mathbf{a}_x =0$, then
		$[\mathbf{a}-\ee_x-\ee_y]_x$ is
		negative. Consequently, for any $j \geq 1$, we have
		$\beta_{j,\mathbf{a}-\ee_x-\ee_y}
		(R/(I(G\setminus \{x\}):xy)) =0$. Thus  only one term on the
		right hand side of Equation (\ref{betti-eq})
		will contribute to $\beta_{i, \mathbf{a}}(R/I(G))$.
		Observe that $G\setminus
		\{x\}$ is a unicyclic graph on $(n-1)$-vertices having girth $m$
		and that $$I(G\setminus \{x\}) :xy
		=I(G \setminus N_G[y])+(N_{G \setminus \{x\}}(y)).$$ Since Koszul
		complex is the minimal free resolution of  $(N_{G \setminus \{x\}}(y))$,
		$\beta_{i,\mathbf{a}}(R/N_{G \setminus \{x\}}(y)) \in \{0,1\}$, for all $i \geq 1$ and $\mathbf{a} \in \mathbb{N}^n$. Observe that $G\setminus
		N_G[y]$ is either a forest or a unicyclic graph. If
		$G\setminus N_G[y]$ is a forest, then by \cite[Theorem
		2.2.2]{BRR}, $\beta_{i,\mathbf{a}}(R/I(G\setminus N_G[y]))
		\in \{0,1\}$ for all $ i \geq 1$ and $\mathbf{a} \in
		\mathbb{N}^n$. 
		
		\par (1) Assume that $m$ is not a multiple of $3$. Since $G \setminus
		\{x\}$ is a unicyclic graph on $n-1$ vertices and having girth $m$, by
		induction, we have $\beta_{i,\mathbf{a}}(R/I(G\setminus \{x\}))
		\in \{0,1\}$ for all $ i \geq 1$ and $\mathbf{a} \in
		\mathbb{N}^n$. If $G\setminus N_G[y]$ is a unicyclic graph,
		then also we may conclude by induction that $\beta_{i,\mathbf{a}}(R/I(G\setminus
		N_G[y])) \in \{0,1\}$ for all $ i \geq 0$ and $\mathbf{a}
		\in \mathbb{N}^n$. Since the generators of $(N_{G \setminus
			\{x\}}(y))$ and $I(G\setminus N_G[y])$ are in disjoint variables,
		tensor product of the minimal free resolutions of $R/(N_{G
			\setminus \{x\}}(y))$ and $R/I(G\setminus N_G[y])$
		gives the minimal free resolution of $R/(I(G\setminus
		\{x\}):xy)$. Thus,
		$\beta_{i,\mathbf{a}}(R/(I(G\setminus \{x\}):xy)) \in
		\{0,1\}$ for all $i \geq 1$ and $\mathbf{a} \in \mathbb{N}^n$. 
		Hence, it follows from (\ref{betti-eq}) that for each $i \geq 1$ and 
		$\mathbf{a} \in \mathbb{N}^n$, $\beta_{i,\mathbf{a}}(R/I(G)) \in 
		\{0,1\}$. 
		\par (2) Assume that $m$ is a multiple of $3$. Observe that $G\setminus \{x\}$ is a
		unicyclic graph on $n-1$ vertices with girth $m$. By induction, we have
		$\beta_{i,\mathbf{a}}(R/I(G\setminus \{x\})) \in \{0,1,2\}$ for
		all $ i \geq 1$ and $\mathbf{a} \in \mathbb{N}^n$.   If
		$G\setminus N_G[y]$ is a unicyclic graph, then by induction,
		$\beta_{i,\mathbf{a}}(R/I(G\setminus N_G[y])) \in \{0,1,2\}$
		for all $ i \geq 1$ and $\mathbf{a} \in \mathbb{N}^n$.
		Thus,
		$\beta_{i,\mathbf{a}}(R/(I(G\setminus \{x\}):xy)) \in
		\{0,1,2\}$ for all $i \geq 1$ and $\mathbf{a} \in \mathbb{N}^n$.
		Hence, it follows from (\ref{betti-eq}) that for each $i \geq 1$ and 
		$\mathbf{a} \in \mathbb{N}^n$, $\beta_{i,\mathbf{a}}(R/I(G)) \in 
		\{0,1,2\}$.
		
		Now, we prove second part. Since $x$ is  at a distance of at most  $2$
		from the unique cycle, $y$ is at a distance at most one from the
		unique cycle, and thus,  $G\setminus N_G[y]$ is a forest. Thus by 
		eq. \eqref{betti-eq}, $\beta_{i,\mathbf{a}}(R/I(G))=2$ if and only if $\beta_{i,\mathbf{a}}(R/I(G\setminus \{x\}))=2$. Since $G \setminus
		\{x\}$ is a unicyclic graph on $n-1$ vertices and having girth $m$. 
		Therefore, by induction, we have $\beta_{i,\mathbf{a}}(I(G\setminus
		\{x\}))=2$ if and only if $i=2k$ and $\mathbf{a}= \sum_{x\in
			V(C_m)}\ee_x$. Hence, the assertion follows.
	\end{proof}

In the previous two sections, we saw that the knowledge of 
Betti numbers of a graph would give information about 
Betti numbers of the cone of that graph along a vertex cover. 
We prove a similar result for
multigraded Betti numbers here.
	
\begin{theorem}\label{mvc}
Let $H$ be a non-trivial graph on $n$-vertices. Let $U$ be a vertex
cover of $H$ and $x$ be a new vertex. Let $G = x *_U H$. If 
	
\begin{minipage}{\linewidth}
\begin{minipage}{0.43\linewidth}
$$\begin{array}{cc}
\beta_{i,\mathbf{a}}\left(\frac{R}{I(H)}\right) \leq	\Big\{ \begin{array}{cc}
c &	\text{ if } \;|\mathbf{a}|=i+1, \\
d & \text{ if } \; |\mathbf{a}|>i+1,
\end{array}
\end{array}$$
\end{minipage}
\begin{minipage}{0.05\linewidth}
then
\end{minipage}
\begin{minipage}{0.42\linewidth}
$$\begin{array}{cc}
\beta_{i,\mathbf{a}}\left(\frac{R}{I(G)}\right) \leq	\Big\{ \begin{array}{cc}
c+1 &	\text{ if } \;|\mathbf{a}|=i+1, \\
d & \text{ if } \; |\mathbf{a}|>i+1.
\end{array}
\end{array}$$
\end{minipage}
\end{minipage}
\end{theorem}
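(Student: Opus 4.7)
The strategy is to establish a multigraded refinement of the formula from the proof of Theorem \ref{vertexcover-cone}, and then carry out a short case analysis on whether $|\mathbf{a}| = i+1$ or $|\mathbf{a}| > i+1$.

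The first step is to derive the multigraded identity
\[
\beta_{i,\mathbf{a}}^R\!\left(\frac{R}{I(G)}\right) = \beta_{i,\mathbf{a}}^R\!\left(\frac{R}{I(H)}\right) + \beta_{i-1,\mathbf{a}-\ee_x}^R\!\left(\frac{R}{I(H)}\right) + \beta_{i,\mathbf{a}}^R\!\left(\frac{R}{I(H')}\right)
\]
for $i \geq 1$ and $|\mathbf{a}| \geq 2$, where $H' = K_{1,|U|}$ is the star graph on $\{x\} \cup U$. This is the multigraded analogue of equation (\ref{betti4}) and will follow from applying the Francisco--H\`a--Van~Tuyl Betti splitting to $I(G) = I(H) + I(H')$ with intersection $I(H) \cap I(H') = x\,I(H)$ (already used in the proof of Theorem \ref{vertexcover-cone}, via \cite[Corollary 4.3]{HV07}). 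To verify that this is a genuine Betti splitting in the multigraded sense, I would observe that the comparison map $xI(H) \hookrightarrow I(H)$ is just multiplication by $x \in \mathfrak{m}$, and that any minimal generator of $xI(H)$ factors as a variable of $V(H)$ times a generator of $I(H')$; hence both maps land in $\mathfrak{m}\cdot(-)$ and induce zero on $\tor^R(-,\kk)$.

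Next I would record two key vanishings. Since no minimal generator of $I(H)$ involves $x$, the first summand vanishes whenever $\mathbf{a}_x \geq 1$, and the second vanishes whenever $\mathbf{a}_x = 0$; hence at most one of the two $R/I(H)$-contributions is nonzero for any given $\mathbf{a}$. For the star term, a direct Hochster's formula computation on the independence complex of $H'$ (which is the disjoint union of the simplex on $U$ and the isolated vertex $\{x\}$, so its induced subcomplex on $W$ has reduced homology $\kk$ in degree $0$ exactly when $x \in W$ and $W \cap U \neq \emptyset$) gives $\beta_{i,\mathbf{a}}^R(R/I(H')) = 1$ when $\mathbf{a}$ is squarefree with $\mathbf{a}_x = 1$ and $|\mathbf{a}| = i+1$, and $0$ otherwise. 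In particular, this term is bounded by $1$ when $|\mathbf{a}| = i+1$ and vanishes identically when $|\mathbf{a}| > i+1$.

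The case analysis is then immediate. If $|\mathbf{a}| > i+1$, the star contribution is zero, and the unique nonzero term among the first two has multidegree size strictly greater than index-plus-one (note $|\mathbf{a}-\ee_x| = |\mathbf{a}|-1 > i = (i-1)+1$), so the hypothesis yields the bound $d$. If $|\mathbf{a}| = i+1$, the relevant first-two summand has multidegree size equal to index-plus-one (again using $|\mathbf{a}-\ee_x| = i = (i-1)+1$), so hypothesis gives $c$; adding at most $1$ from $R/I(H')$ yields $c+1$. The only genuinely delicate step is verifying the multigraded Betti splitting identity; once that is in hand, the rest is bookkeeping, with the collapse of $\beta_{i,\mathbf{a}}^R(R/I(H'))$ outside the initial strand being precisely what preserves the bound $d$ in the second case.
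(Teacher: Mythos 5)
Your route is essentially the paper's: the same decomposition $I(G)=I(H)+I(H')$ with $H'=K_{1,|U|}$, the same three-term multigraded identity, the observation that at most one of the two $R/I(H)$ contributions survives according to whether $\mathbf{a}_x=0$ or not, the bound $\beta_{i,\mathbf{a}}^R(R/I(H'))\leq 1$ on the linear strand and its vanishing when $|\mathbf{a}|>i+1$, and the same bookkeeping. The paper simply quotes \cite[Theorem 4.6]{HV07} for the identity (and \cite[Theorem 2.2.2]{BRR}, \cite[Theorem 5.2.4]{JTh} for the star term, where you instead do a small Hochster computation, which is fine up to the harmless caveat that your ``exactly when'' description should also require $\supp(\mathbf{a})\subseteq\{x\}\cup U$).

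The one step that would fail as written is your verification that $I(H)+I(H')$ is a Betti splitting: you argue that both comparison maps from $I(H)\cap I(H')=xI(H)$ have image inside $\mathfrak{m}$ times the target and conclude they induce zero on $\tor^R(-,\kk)$. That principle is false in general: a homomorphism $\varphi\colon M\to N$ with $\varphi(M)\subseteq\mathfrak{m}N$ need not vanish on higher $\tor$ (e.g.\ over $\kk[x]$, the map $\kk[x]/(x)\to \kk[x]/(x^2)$ sending $1\mapsto x$ is an isomorphism on $\tor_1(-,\kk)$). Only the inclusion $xI(H)\hookrightarrow I(H)$ is literally multiplication by the single element $x\in\mathfrak{m}$ (via $xI(H)\cong I(H)(-\ee_x)$), and that is what kills it on $\tor$; the inclusion $xI(H)\hookrightarrow I(H')$ is not multiplication by one ring element, and writing each generator as (variable)$\cdot$(generator of $I(H')$) does not suffice. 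The gap is repairable without changing your plan: either cite \cite[Theorem 4.6]{HV07} directly, as the paper does (the statement is multigraded), or invoke the Francisco--H\`a--Van Tuyl criterion that a partition of the generators in which the part containing all generators divisible by $x$ has a linear resolution is a Betti splitting --- here that part is $I(H')=(xu : u\in U)$, which does have a linear resolution --- and note that the resulting formula is $\NN^{n+1}$-graded. With that substitution the rest of your argument goes through and coincides with the paper's proof.
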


\begin{proof}
Let $H'$ be the
subgraph of $G$ on the vertex set $\{x\} \cup U$ and edge set
$\{\{x,u\} : u \in U\}$. It  follows from \cite[Theorem
4.6]{HV07} that 
\begin{align*}
	\beta_{i,\mathbf{a}}^{R}\left(\frac{R}{I(G)}\right)&=\beta_{i,\mathbf{a}}^R\left(\frac{R}{I(H)}\right)+\beta_{i-1,\mathbf{a}}^R\left(\frac{R}{xI(H)}\right)+\beta_{i,\mathbf{a}}^R\left(\frac{R}{I(H')}\right)\\
	&=\beta_{i,\mathbf{a}}^R\left(\frac{R}{I(H)}\right)+\beta_{i-1,\mathbf{a}-\ee_x}^R\left(\frac{R}{I(H)}\right)+\beta_{i,\mathbf{a}}^R\left(\frac{R}{I(H')}\right).
\end{align*}
Let $ i\geq 1$ and let $\mathbf{a} \in \mathbb{N}^{n+1}$ such that $|\mathbf{a}| = i+1$. Since $H'$ is a tree, by \cite[Theorem 2.2.2]{BRR}, $\beta_{i,\mathbf{a}}^R(R/I(H')) \in \{0,1\}$. 
If
$\mathbf{a}_x\neq 0$, then for any $i \geq 1$, $\beta_{i,\mathbf{a}}^R(R/I(H))
=0$ as $x$ does not divide any of the minimal monomial generators of
$I(H)$. If $\mathbf{a}_x =0$, then $[\mathbf{a}-\ee_x]_x = -1$.
Consequently, for any $i \geq 1$, we have $\beta_{i,\mathbf{a}-\ee_x}^R
(R/I(H)) =0$, and $\beta_{i,\mathbf{a}}^R(R/I(H')) =0$. Thus,  $\beta_{i,\mathbf{a}}^R(R/I(G)) \leq c+1$.

Now, assume that $\mathbf{a} \in \mathbb{N}^{n+1}$ such that $|\mathbf{a}|>i+1$. Since $H' =
K_{1, |U|}$, by \cite[Theorem 5.2.4]{JTh}, $\beta_{i,\mathbf{a}}^R(R/I(H')) = 0$ as $|\mathbf{a}| > i+1$. Therefore, 
\begin{eqnarray}\label{betag1}
	\beta_{i,\mathbf{a}}^R\left(\frac{R}{I(G)}\right) & = &
	\beta_{i,\mathbf{a}}^R\left(\frac{R}{I(H)}\right)+
	\beta_{i-1,\mathbf{a}-\ee_x}^R\left(\frac{R}{I(H)}\right).
\end{eqnarray}
If
$\mathbf{a}_x\neq 0$, then for any $i \geq 1$, $\beta_{i,\mathbf{a}}^R(R/I(H))
=0$ as $x$ does not divide any of the minimal monomial generators of
$I(H)$. If $\mathbf{a}_x =0$, then $[\mathbf{a}-\ee_x]_x = -1$.
Consequently, for any $i \geq 1$, we have $\beta_{i,\mathbf{a}-\ee_x}^R
(R/I(H)) =0$. Thus  only one term in Equation \eqref{betag1} will
contribute to $\beta_{i, \mathbf{a}}^R(R/I(G))$, and hence,
$\beta_{i,\mathbf{a}}^R(R/I(G)) \leq d$.
\end{proof}
As a consequence, we obtain upper bounds for the multigraded Betti
numbers of several classes of graphs.

\begin{corollary}\label{multi-betti-jh}
	Let $U$ be a vertex cover of $C_n$, $x$ be a vertex and 
	$G=x*_U C_n $.  Then $\beta_{i,\mathbf{a}}^R(R/I(G))
	\leq 2$ for all $i \geq 1$ and $\mathbf{a} \in \mathbb{N}^{n+1}$. In
	particular, if $G = W_n$, the wheel graph on $n+1$ vertices or $G =
	J_{2,n}$, the Jahangir graph, then $\beta_{i,\mathbf{a}}^R(R/I(G))
	\leq 2$ for all $i \geq 1$ and $\mathbf{a} \in \mathbb{N}^{|V(G)|}$.
\end{corollary}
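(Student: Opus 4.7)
The plan is to apply Theorem \ref{mvc} with $H = C_n$, using the multigraded Betti number bounds for cycles established in Theorem \ref{multi-betti-uc} to verify its hypotheses.

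First, I would invoke Theorem \ref{multi-betti-uc} for the unicyclic graph $H = C_n$ (girth $m = n$, no pendant vertices, so every vertex is automatically at distance zero from the cycle). This gives $\beta_{i,\mathbf{a}}^R(R/I(C_n)) \leq 2$ for all $i \geq 1$ and $\mathbf{a} \in \mathbb{N}^{n}$; moreover, when $n$ is not a multiple of $3$ the bound improves to $1$, and when $n = 3k$ the value $2$ is attained only at the single pair $(i,\mathbf{a}) = (2k,\,\sum_{x \in V(C_n)} \ee_x)$.

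Next, I would check the hypothesis of Theorem \ref{mvc} with $c = 1$ and $d = 2$. The bound $d = 2$ for multidegrees with $|\mathbf{a}| > i+1$ is immediate from the previous paragraph. To establish $c = 1$ in the linear strand $|\mathbf{a}| = i+1$: when $n \not\equiv 0 \pmod 3$, we already have $\beta_{i,\mathbf{a}}^R(R/I(C_n)) \in \{0,1\}$; when $n = 3k$ with $k \geq 2$, the unique exceptional multidegree $(2k,\,\sum_{x}\ee_x)$ satisfies $|\mathbf{a}| - (i+1) = 3k - (2k+1) = k-1 \geq 1$, so it lies strictly above the linear strand. Hence every Betti number of $R/I(C_n)$ in the linear strand is at most $1$.

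Applying Theorem \ref{mvc} then yields $\beta_{i,\mathbf{a}}^R(R/I(G)) \leq c+1 = 2$ when $|\mathbf{a}| = i+1$ and $\beta_{i,\mathbf{a}}^R(R/I(G)) \leq d = 2$ when $|\mathbf{a}| > i+1$; since multigraded Betti numbers vanish for $|\mathbf{a}| < i+1$, the bound $\leq 2$ holds for all $i \geq 1$ and $\mathbf{a} \in \mathbb{N}^{n+1}$. The ``in particular'' statements are immediate: $W_n = x * C_n$ is the case $U = V(C_n)$, and $J_{2,n} = x *_U C_{2n}$ is the case where $U$ is the designated vertex cover of $C_{2n}$ of size $n$. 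The main subtlety is the linear-strand analysis when $3 \mid n$: the conclusion $\leq 2$ (rather than the weaker $\leq c+1 = 3$ one would get by taking $c = 2$) depends crucially on locating the exceptional multidegree of Theorem \ref{multi-betti-uc} strictly above the linear strand.
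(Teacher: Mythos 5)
Your route is essentially the paper's: the paper also splits off the strands with $|\mathbf{a}|>i+1$ via Theorem \ref{mvc}, and for the linear strand it re-enters the proof of Theorem \ref{mvc} (the case formula according to whether $\mathbf{a}_x=0$ or not) and then quotes the cycle bounds of Boocher et al.; the fact doing the work there is exactly the one you isolate, namely that the linear-strand multigraded Betti numbers of $R/I(C_n)$ are at most $1$ because the exceptional value $2$ occurs only in the top multidegree. Packaging this as the verification of the hypothesis of Theorem \ref{mvc} with $(c,d)=(1,2)$ is a perfectly legitimate, and in fact cleaner, way to run the same argument.

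There is, however, a loose end in your linear-strand analysis: you check the exceptional multidegree only for $n=3k$ with $k\geq 2$ (your inequality $k-1\geq 1$ needs $k\geq 2$), and then conclude the bound $c=1$ for \emph{every} $n$. For $n=3$ this is false: by Hochster's formula $\beta_{2,(1,1,1)}^R(R/I(C_3))=\dim_\kk\Hred_0(\{3\text{ points}\};\kk)=2$, and $|(1,1,1)|=3=2+1$, so the exceptional value sits \emph{in} the linear strand and the hypothesis of Theorem \ref{mvc} with $c=1$ fails for $C_3$. This is not merely a defect of the argument: for $U=V(C_3)$ one has $G=x*C_3=K_4$, and $\beta_{3,(1,1,1,1)}^R(R/I(K_4))=\dim_\kk\Hred_0(\{4\text{ points}\};\kk)=3>2$, so the stated bound itself fails for the wheel $W_3$. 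In other words, your proof (like the paper's, which glosses over the same point when it cites the cycle bounds at the last step) establishes the corollary only for $n\geq 4$; either restrict to $n\geq 4$ or note explicitly that the case $n=3$ with $U=V(C_3)$ must be excluded. For the named families this is harmless for $J_{2,n}$ (the cycle there is $C_{2n}$ with $2n\geq 4$), but $W_3=K_4$ genuinely violates the bound.
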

\begin{proof}
Let $ i \geq 1$ and $\mathbf{a} \in \mathbb{N}^{n+1}$. It follows from
\cite[Proposition 3.5]{BDGMS19} that $\beta_{i,\mathbf{a}}^R(R/I(C_n))
\leq 2$. If $|\mathbf{a}|>i+1$, then, by Theorem
\ref{mvc}, $\beta_{i,\mathbf{a}}^R(R/I(G)) \leq 2.$ Now, assume
that $|\mathbf{a}|=i+1$. Then, following the notation and proof
of Theorem \ref{mvc}, we get 
$$\begin{array}{cc}
\beta_{i,\mathbf{a}}\left(\frac{R}{I(G)}\right) =\Bigg\{ \begin{array}{cc}
\beta_{i, \mathbf{a}}^R\left(\frac{R}{I(C_n)}\right) &	\text{ if }
\;\mathbf{a}_x=0, \\
\beta_{i-1, \mathbf{a}-\ee_x}^R\left(\frac{R}{I(C_n)}\right)+ \beta_{i,\mathbf{a}}^R\left(\frac{R}{I(H')}\right) & \text{ if } \; \mathbf{a}_x \neq 0.
\end{array} 
\end{array}$$ Since $H'$ is a tree, by \cite[Theorem 2.2.2]{BRR},
$\beta_{i,\mathbf{a}}^R(R/I(H')) \in \{0,1\}$. Now
the first part of the Corollary follows from 
\cite[Proposition 3.5]{BDGMS19}. The second part follows by observing 
that $W_n = x * C_n$ and $J_{2,n} = x *_U C_{2n}$, where 
$U = \{x_{2k-1} ~ : ~ 1 \leq k \leq n\}$. 
\end{proof}
	
\begin{corollary}\label{multi-betti-fan}
Let $G =F_{m,n}$ be the fan graph on $n+m$ vertices, $n \geq 2, m
\geq 1$. Then $\beta_{i,\mathbf{a}}^R(R/I(G)) \leq 2$ for all $i \geq
1$ and $\mathbf{a} \in \NN^{n+m}$.
\end{corollary}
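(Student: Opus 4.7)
The plan is to induct on $m$, using the multigraded Betti number recursion that is established in the proof of Theorem \ref{mvc}. For the base case $m = 1$ we have $F_{1, n} = x_1 *_U P_n$ with $U = V(P_n)$; since $P_n$ is a tree, \cite[Theorem 2.2.2]{BRR} gives $\beta_{i, \mathbf{a}}(R/I(P_n)) \leq 1$ for every $i \geq 1$ and $\mathbf{a}$, so Theorem \ref{mvc} applied with $c = d = 1$ immediately yields $\beta_{i, \mathbf{a}}(R/I(F_{1, n})) \leq 2$.

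For the inductive step $m \geq 2$, I would write $F_{m, n} = x_m *_U F_{m-1, n}$, noting that $U = V(P_n)$ remains a vertex cover of $F_{m-1, n}$ since every edge of $F_{m-1, n}$ is incident to a vertex of $V(P_n)$. The identity from \cite[Theorem 4.6]{HV07} used in the proof of Theorem \ref{mvc} reads
\[
\beta_{i, \mathbf{a}}\left(\tfrac{R}{I(F_{m, n})}\right) = \beta_{i, \mathbf{a}}\left(\tfrac{R}{I(F_{m-1, n})}\right) + \beta_{i-1, \mathbf{a} - \ee_{x_m}}\left(\tfrac{R}{I(F_{m-1, n})}\right) + \beta_{i, \mathbf{a}}\left(\tfrac{R}{I(H')}\right),
\]
where $H' = K_{1, n}$ is the star centered at $x_m$ with leaves $V(P_n)$. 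The naive bound $2 + 2 + 1 = 5$ is useless; instead I would split on whether $\mathbf{a}_{x_m} = 0$ and, within the linear strand, on whether $\supp(\mathbf{a}) \subseteq V(H')$, showing that at most two of the three summands can be nonzero at any given $(i, \mathbf{a})$.

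If $\mathbf{a}_{x_m} = 0$ the middle and right summands vanish automatically, so induction on $F_{m-1, n}$ suffices. If $\mathbf{a}_{x_m} \neq 0$ the first summand vanishes since $x_m \notin V(F_{m-1, n})$, and the right summand vanishes unless both $|\mathbf{a}| = i+1$ (because $I(H')$ has a linear resolution) and $\supp(\mathbf{a}) \subseteq \{x_m\} \cup V(P_n)$ (because every generator of $I(H')$ involves only those variables); in all remaining situations the induction hypothesis again gives the bound. The only delicate case is $\mathbf{a}_{x_m} \neq 0$ with $|\mathbf{a}| = i + 1$ and $\supp(\mathbf{a}) \subseteq V(H')$. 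Here $\supp(\mathbf{a} - \ee_{x_m}) \subseteq V(P_n)$, so by Hochster's formula the middle summand depends only on the induced subgraph of $F_{m-1, n}$ on this support, which coincides with the corresponding induced subgraph of $P_n$. Hence the middle summand equals $\beta_{i-1, \mathbf{a} - \ee_{x_m}}(R/I(P_n)) \leq 1$ by Bouchat, while the right summand is at most $1$, giving $\beta_{i, \mathbf{a}}(R/I(F_{m, n})) \leq 2$.

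The main obstacle is precisely this last case: a bare inductive bound of $2$ on $F_{m-1, n}$ combined with the $+1$ contribution from the star would only yield $3$, so one must exploit the support restriction $\supp(\mathbf{a} - \ee_{x_m}) \subseteq V(P_n)$ to replace the middle summand by a computation on $P_n$, where the tighter tree bound of $1$ is available. Once this reduction is isolated, the rest is a careful case analysis on the three-term multigraded cone recursion.
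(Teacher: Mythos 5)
Your proposal is correct and follows essentially the same route as the paper: induction on $m$ via the three-term recursion of Theorem \ref{mvc}, splitting on $\mathbf{a}_{x_m}$ and on whether any other apex coordinate of $\mathbf{a}$ is nonzero, with the key point in the delicate linear-strand case being that $\supp(\mathbf{a}-\ee_{x_m})\subseteq V(P_n)$ forces the middle term to equal a Betti number of $P_n$, bounded by $1$ by Bouchat's tree result. The only cosmetic difference is that you justify this restriction step explicitly via Hochster's formula, whereas the paper states the equality $\beta_{i-1,\mathbf{a}-\ee_{x_m}}(R/I(F_{m-1,n}))=\beta_{i-1,\mathbf{a}-\ee_{x_m}}(R/I(P_n))$ directly.
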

\begin{proof}
We do this by induction on $m$. If $m = 1$, then the assertion is
immediate from Theorem \ref{mvc} since $F_{1,n} = x_1 * P_n$.
Assume that $\beta_{i, \mathbf{a}}^R(R/I(F_{m-1,n})) \leq 2$ for all
$i \geq 1$ and $\mathbf{a} \in \NN^{n+m}$. If $|\mathbf{a}| >
i+1$, then it follows from Theorem \ref{mvc} that
$\beta_{i,\mathbf{a}}(R/I(F_{m,n})) \leq 2$. If $|\mathbf{a}| = i+1$,
then following the proof of Theorem \ref{mvc}, we get
$$\begin{array}{cc}
\beta_{i,\mathbf{a}}\left(\frac{R}{I(F_{m,n})}\right) =
\Bigg\{ \begin{array}{cc}
\beta_{i, \mathbf{a}}^R\left(\frac{R}{I(F_{m-1,n})}\right) &	\text{ if }
	\;\mathbf{a}_{x_m}=0, \\
	\beta_{i-1, \mathbf{a}-\ee_{x_m}}^R\left(\frac{R}{I(F_{m-1,n})}\right)+
	\beta_{i,\mathbf{a}}^R\left(\frac{R}{I(H')}\right) & \text{ if }
	\; \mathbf{a}_{x_m} \neq 0.
\end{array} 
\end{array}$$ 
If $\aaa_{x_i} \neq 0$ for some $1 \leq i \leq m-1$, then
$\beta_{i,\aaa}^R(R/I(H')) = 0$ so that the assertion holds true. If
$\aaa_{x_i} = 0$ for all $1 \leq i \leq m-1$ and $\aaa_{x_m} \neq 0$,
then $\beta_{i,\aaa}^R(R/I(H')) \leq 1$.  In this case 
$[\aaa - \ee_{x_m}]_{x_j} = 0$ for all $1 \leq j \leq m$
so that $\beta_{i,
\aaa - \ee_{x_m}}^R(R/I(F_{m-1},n)) = \beta_{i,
\aaa - \ee_{x_m}}^R(R/I(P_n))\leq 1$. This completes the proof.
\end{proof}

\begin{corollary}\label{multi-betti-cm}
Let $G$ be a complete $k$-partite graph on $n$-vertices. Then,
$\beta_{i,\mathbf{a}}^R(R/I(G)) \leq k-1$, for all $i \geq 1$ and
$\mathbf{a} \in \mathbb{N}^{n}$.
\end{corollary}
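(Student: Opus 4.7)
The plan is to compute the multigraded Betti numbers directly via Hochster's formula, rather than by iterating Theorem \ref{mvc}. Since $I(G)$ is a squarefree monomial ideal, $\beta_{i,\mathbf{a}}^R(R/I(G)) = 0$ whenever $\mathbf{a}$ is not squarefree, so it suffices to treat squarefree $\mathbf{a}$ with support $W := \supp(\mathbf{a}) \subseteq V(G)$. For these, \eqref{HochstersFormula} specializes to
$$\beta_{i,\mathbf{a}}^R(R/I(G)) = \dim_{\kk}\Hred_{|W|-i-1}(\Delta_G[W];\kk),$$
where $\Delta_G$ denotes the independence complex of $G$.

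Next I would identify $\Delta_G$ for $G = K_{n_1,\ldots,n_k}$. Writing $V(G) = V_1 \sqcup \cdots \sqcup V_k$ for the partition into independent parts, a subset $F \subseteq V(G)$ is independent in $G$ if and only if $F \subseteq V_j$ for some $j$, because any two vertices from distinct parts are adjacent. Thus the facets of $\Delta_G$ are precisely the $k$ full simplices on $V_1,\ldots,V_k$, and any two of them intersect only in the empty face. Consequently, the geometric realization of $\Delta_G$ is a disjoint union of $k$ simplices. The same description survives restriction: for any $W \subseteq V(G)$, the induced subcomplex $\Delta_G[W]$ is a disjoint union of $k'$ nonempty simplices on the vertex sets $V_j \cap W$ (over those $j$ with $V_j \cap W \neq \emptyset$), where $k' \leq k$.

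Since each simplex is contractible, a disjoint union of $k'$ nonempty simplices has $\Hred_0 \cong \kk^{k'-1}$ and $\Hred_p = 0$ for $p \neq 0$. Substituting into Hochster's formula shows that $\beta_{i,\mathbf{a}}^R(R/I(G)) = 0$ unless $|W| = i+1$, in which case $\beta_{i,\mathbf{a}}^R(R/I(G)) = k'-1 \leq k-1$, as claimed.

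There is no substantial obstacle here; the only task is to read off the topology of $\Delta_G$ from the combinatorics of $K_{n_1,\ldots,n_k}$. An inductive approach through Theorem \ref{mvc} is tempting because it parallels the preceding corollaries, but each application of that theorem can push the linear-strand bound up by one, so adding vertices one at a time cannot yield the tight bound $k-1$ without additional bookkeeping of exactly the form the Hochster calculation supplies automatically. The direct topological argument above is therefore the natural route.
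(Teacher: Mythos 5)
Your proof is correct, and it takes a genuinely different route from the paper. The paper argues by a double induction (on the number of parts $k$ and on the size of the last part), repeatedly invoking Theorem \ref{mvc} together with the formula from \cite[Theorem 4.6]{HV07} and a case analysis on whether $\mathbf{a}_{x_r}$ vanishes — exactly the ``bookkeeping'' you anticipated would be needed to keep the linear-strand bound from drifting upward at each step. You instead read everything off from the multigraded Hochster formula: the independence complex of a complete $k$-partite graph is a disjoint union of $k$ simplices (one per part), this description is preserved under restriction to any $W \subseteq V(G)$, and a disjoint union of $k' \leq k$ contractible pieces has reduced homology only in degree $0$, of dimension $k'-1$. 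Your computation is not only shorter but sharper: it shows $\beta_{i,\mathbf{a}}^R(R/I(G))$ vanishes unless $\mathbf{a}$ is squarefree with $|\mathbf{a}| = i+1$ (recovering the linear-resolution fact the paper cites from \cite{JTh}), and identifies the exact value as $k'-1$ where $k'$ is the number of parts meeting $\supp(\mathbf{a})$, with the bound $k-1$ attained precisely when the support meets every part. What the paper's inductive approach buys is uniformity with the neighbouring corollaries (wheels, Jahangir graphs, fans), which are not covered by such a clean topological description; for the complete multipartite case in isolation, your argument is the more natural one.
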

\begin{proof}
Let $ i \geq 1$ and $ \mathbf{a} \in \mathbb{N}^n$. If
$|\mathbf{a}|>i+1$, then $\beta_{i,\mathbf{a}}^R(R/I(G))=0$,
\cite{JTh}.
Assume that $|\mathbf{a}|=i+1$. We prove this by induction on $k$.
Assume that $k=2$. Then, $G$ is a complete bipartite graph.
Therefore, $V(G)=V_1 \sqcup V_2$ such that $G[V_i]$ is a trivial
graph for $i=1,2$ and $G = G[V_1]*G[V_2]$. 
We now proceed by induction on $|V_2|$. If
$|V_2|=1$, then $G$ is a tree and thus, by \cite[Theorem
2.2.2]{BRR}, the result holds. Let $V_2 =\{x_1,\ldots,x_r\}$ with
$r >1$. Set $V_2'=\{x_1,\ldots,x_{r-1}\}$. Note that
$G'=G[V_1]*G[V_2']$ is a complete bipartite graph. By induction,
$\beta_{i,\mathbf{a}}^R(R/I(G')) \leq 1$, for all $i$ and $\mathbf{a}$.
Since $V_1$ is a vertex cover of $G'$, it follows from the proof
of  Theorem \ref{mvc} that
$$\begin{array}{cc}
\beta_{i,\mathbf{a}}\left(\frac{R}{I(G)}\right) =\Bigg\{ \begin{array}{cc}
\beta_{i, \mathbf{a}}^R\left(\frac{R}{I(G')}\right) &	\text{ if } \;\mathbf{a}_{x_r}=0, \\
\beta_{i-1, \mathbf{a}-\ee_{x_r}}^R\left(\frac{R}{I(G')}\right)+ \beta_{i,\mathbf{a}}^R\left(\frac{R}{I(H')}\right) & \text{ if } \; \mathbf{a}_{x_r} \neq 0.
\end{array} 
\end{array}$$ 
If $\mathbf{a}_{x_j} \neq 0$ for some $1 \leq j \leq r-1$, then
$\beta_{i, \mathbf{a}}(R/I(H')) = 0$. If $\aaa_{x_j} = 0$ for $1
\leq j \leq r-1$ and $\aaa_{x_r} \neq 0$,
then $\beta_{i-1, \mathbf{a}-\ee_{x_r}}^R(R/I(G')) =
\beta_{i-1, \mathbf{a}-\ee_{x_r}}^R(R/I(G[V_1])) =
0.$ Hence the assertion follows for the case $k = 2$.

Assume that $k > 2$. Let $V(G) = V_1 \sqcup \cdots \sqcup V_k$ and
that the result holds true for any complete $(k-1)$-partite graph. Now we prove
the assertion for $k$ by induction on $|V_k|$. Let $G' = G[V_1 \sqcup \cdots \sqcup V_{k-1}]$. Suppose $V_k =
\{x\}$. Then $G =
	x* G'$. Hence the result follows from Theorem \ref{mvc}. Let $V_k =
\{x_1, \ldots, x_r\}$, $r > 1$. Take $U = V_1 \sqcup \cdots \sqcup
V_{k-1}$ and observe that $G = x_r *_U G[U
\sqcup \{x_1, \ldots x_{r-1}\}]$. Now arguments similar to the proof of the case $k = 2$, we
get the required assertion.
\end{proof}
	
We conclude the article with a question on multigraded Betti numbers:
\begin{question}
Given upper bounds for multigraded Betti numbers of $I(G)$ and $I(H)$,
can one obtain an upper bound for multigraded Betti numbers of $I(G*H)$?
\end{question}
	\bibliographystyle{plain}  
	\bibliography{bilbo}

\end{document}